\documentclass[11pt]{article}
\usepackage[margin=1in]{geometry}
\usepackage{amsmath,amssymb,amsthm,mathtools}
\usepackage{xcolor}
\usepackage{enumitem}

\newtheorem{theorem}{Theorem}
\newtheorem{lemma}[theorem]{Lemma}
\newtheorem{proposition}[theorem]{Proposition}

\theoremstyle{definition}
\newtheorem{assumption}{Assumption}
\newtheorem{remark}{Remark}
\newtheorem{example}{Example}

\newcommand{\R}{\mathbb{R}}
\newcommand{\E}{\mathbb{E}}
\newcommand{\Prob}{\mathbb{P}}
\newcommand{\norm}[1]{\left\lVert #1 \right\rVert}
\newcommand{\diag}{\operatorname{diag}}
\newcommand{\Tr}{\operatorname{Tr}}
\newcommand{\eps}{\varepsilon}
\newcommand{\kdat}{\kappa}
\DeclareMathOperator{\rank}{rank}
\newcommand{\kstop}{C_{\mathrm{stop}}}   
\newcommand{\tprior}{t_{\mathrm{prior}}}
\newcommand{\Lc}{\mathcal L}
\newcommand{\Hd}{\mathcal H}
\newcommand{\Tl}{\mathcal T}
\newcommand{\Res}{\mathcal Q}       

\title{Minimax-Optimal Early Stopping for Continuous-Time SGD via the Discrepancy Principle}
\author{Tim Jahn, Loucas Pillaud-Vivien, Adrien Schertzer}
\date{\today}

\begin{document}
\maketitle

\begin{abstract}
We study early stopping for a continuous-time model of stochastic gradient descent (SGD) in ill-posed linear inverse problems. We consider an a posteriori stopping rule based on the discrepancy principle, which stops the dynamics once the residual reaches the noise level. Unlike deterministic gradient flow, the stochastic dynamics exhibits persistent multiplicative fluctuations whose amplitude scales with the residual itself.

Under a source condition on the initial error and a polynomial bound on the spectrum of the empirical covariance operator, we prove that, with high probability, the error at this random stopping time achieves the minimax rate over the corresponding source class, up to a logarithmic factor. Our results therefore establish the discrepancy principle as an adaptive regularization strategy for continuous-time SGD that is minimax-optimal up to a logarithmic factor, despite the persistent fluctuations induced by stochastic sampling. To our knowledge, this is the first convergence-rate result for a continuous-time model of stochastic gradient descent in inverse problems under an a posteriori stopping rule: existing analyses, including those of variance-reduced variants, bound the stopping time but do not quantify the error attained there.
\end{abstract}

\section{Introduction}
\label{sec:intro}
Early stopping is a widely used form of implicit regularization in modern machine learning, from boosting \cite{buhlmann2003boosting,zhang2005boosting} and gradient descent in reproducing kernel Hilbert spaces \cite{yao2007early,raskutti2014early} to the training of large-scale networks and of generative or untrained networks used as reconstruction priors \cite{jahn2024early, martin2025pnp}, where prolonged optimization may amplify noise, overfit finite samples, or degrade generalization. Its mathematical role is particularly transparent in ill-posed inverse problems \cite{hansen1998rank,vogel2002computational}, where the stopping time determines the effective level of regularization. In this work, we study this mechanism in a noisy inverse problem and ask whether an a posteriori stopping rule remains statistically optimal in the presence of stochastic-gradient fluctuations. We do so for a continuous-time model of stochastic gradient descent, introduced below, which retains the multiplicative structure of the sampling noise and makes the tools of stochastic calculus available.

\paragraph{Setting.}
Many large-scale inverse and learning problems amount to recovering an unknown
parameter $\theta^\star\in\mathbb{R}^d$ from noisy linear observations
\[
y=X\theta^\star+\varepsilon,
\quad
y,\varepsilon\in\mathbb{R}^n,
\, \,
X\in\mathbb{R}^{n\times d}
\, \, \text{where } X=
\begin{pmatrix}
x_1^\top\\
\vdots\\
x_n^\top
\end{pmatrix}
\]
is the design matrix, whose rows are the observed feature vectors, and where the noise satisfies
\[
\norm{\eps}\le \sqrt{n}\,\delta
\]
for a noise level $\delta$ that we treat as known. Throughout, $\norm{\cdot}$ is the Euclidean norm and $\norm{\cdot}_F$ the Frobenius norm; $\mathbf 1\in\R^n$ is the all-ones vector, $(e_i)_{i\le n}$ the canonical basis of $\R^n$ and $\diag(z)$ the diagonal matrix with diagonal $z\in\R^n$; for a matrix $A$, $\mathcal R(A)$ and $\mathcal N(A)$ are its range and its null space and $\Pi_A$ the orthogonal projection onto $\mathcal N(A)$, so that $\Pi_X$ acts on $\R^d$ and $\Pi_{X^\top}$ on $\R^n$, with $\mathcal N(X^\top)=\mathcal R(X)^\perp$; $a\vee b$ and $a\wedge b$ are the maximum and the minimum of two reals, $a\lesssim b$ means $a\le Cb$ for a constant $C$ independent of $n,d,\rho,\delta$, $a\asymp b$ means $a\lesssim b\lesssim a$, and $\zeta$ is the Riemann zeta function. The empirical covariance $\Sigma:=X^\top X/n$ and the empirical kernel $K:=XX^\top/n$ share their nonzero eigenvalues $\lambda_1\ge\dots\ge\lambda_m>0$, counted with multiplicity, where $m:=\rank X$. We model the ill-conditioning of $X$, as is classical for mildly ill-posed problems, by a polynomial decay of this spectrum,
\[
\lambda_j\le C_\lambda\,j^{-\beta},\qquad j\le m,\qquad\text{for some }\beta>1\text{ and }C_\lambda>0
\]
(Assumption~\ref{ass:spectrum} below), which is also the capacity condition of kernel learning \cite{caponnetto2007optimal}; together with a source condition on the unknown, the classical smoothness assumption of regularization theory \cite{engl1996} recalled below, this is the standard pair of assumptions under which minimax rates are formulated. The analysis involves $X$ only through this spectrum and through the incoherence of its singular vectors, introduced in Section~\ref{sec:analysis}; in particular it applies verbatim whether $d\ge n$ or $d<n$, a point to which we return once the stopping rule has been introduced.

A standard approach is to minimize the empirical least-squares functional
\[
F(\theta)
:=\frac{1}{2n}
\|X\theta-y\|^2=\frac{1}{2n}\sum_{i=1}^{n}(\langle x_i,\theta\rangle-y_i)^2 =
\frac{1}{n} \sum_{i=1}^n f_i(\theta),
\quad f_i(\theta):=\tfrac12\bigl(\langle x_i,\theta\rangle-y_i\bigr)^2 .
\]
Stochastic gradient descent (SGD) is a widely used solver for this problem,
owing to its scalability in the amount of data $n$
\cite{robbins1951stochastic,bottou2018}. SGD draws at step $k$ a sample $i_k$  uniformly from
$\{1,\dots,n\}$ and sets
\begin{equation}
\label{eq:sgd}
\theta_{k+1}=\theta_k-\gamma\,\nabla f_{i_k}(\theta_k)
=\theta_k-\gamma\,x_{i_k}(x_{i_k}^\top\theta_k-y_{i_k}).
\end{equation}
For the quadratic loss, \eqref{eq:sgd} is a randomized row-action method: up to the choice of step size it is the randomized Kaczmarz iteration \cite{strohmer2009randomized}, and it sits inside the general family of randomized iterative solvers for linear systems analyzed in \cite{gower2015randomized}.
\paragraph{Ill-posedness and early stopping.}
Without early stopping, multi-pass SGD converges, whenever the linear system is consistent, to the interpolating solution selected by its initialization. Consistency means that the interpolation set $\mathcal I_y:=\{\theta\in\R^d:X\theta=y\}$ is nonempty; this holds for every $y$ exactly when $X$ has full row rank, $m=n$, the generic situation in the overparameterized regime $d\ge n$, and $\mathcal I_y=X^\dagger y+\mathcal N(X)$ is then an affine space of dimension $d-n$, where $X^\dagger$ denotes the Moore--Penrose pseudo-inverse of $X$. The limiting parameter is the orthogonal projection of $\theta_0$ onto $\mathcal I_y$ (see \cite{schertzer2024stochastic}):
\[
\theta_\infty
=
\operatorname*{argmin}_{\theta\in\mathcal I_y}
\|\theta-\theta_0\|^2
=
X^\dagger y+\Pi_X\theta_0,
\]
since $X^\dagger y\in\mathcal R(X^\top)=\mathcal N(X)^\perp$. As $X^\dagger X=I_d-\Pi_X$,
\[
\theta_\infty-\theta^\star = X^\dagger\varepsilon + \Pi_X(\theta_0-\theta^\star),
\]
so that, when $X$ is ill-conditioned, even a small observation noise may lead to a large reconstruction error. Crucially, the drift of the gradient dynamics learn the singular modes of $X$ at rates given by the squared singular values, so that the directions in which $X^\dagger$ amplifies the noise most are fitted last. The iteration must therefore be \emph{regularized}, classically by early stopping: it is interrupted once the stable signal components are recovered and before the amplified noise components are fitted \cite{engl1996,hansen1998rank, vogel2002computational,yao2007early}\footnote{Interpolating noisy data is not always harmful in the overparameterized regime
\cite{bartlett2020benign}, but benign-overfitting regimes typically rely on spectral tails heavier than those considered here.}
\paragraph{The discrepancy principle.}
When the noise level $\delta$ is known, the classical noise-calibrated stopping
rule is the discrepancy principle \cite{morozov1966solution,engl1996}, which tracks the observable residual $r_k:= X \theta_k - y$, linked to the error $\eta_k:= \theta_k - \theta^
\star$ by $r_k=X \eta_k - \varepsilon$. It stops as soon as the residual norm reaches the scale of the data noise:
\begin{equation}
\label{eq:dp}
\tau:=\inf\bigl\{k\ge0:\ \norm{r_k}\le\kstop\sqrt n\,\delta\bigr\},
\end{equation}
given an adequate safety factor $\kstop>1$. When $m<n$, in particular when $d<n$, interpolation may fail: by \eqref{eq:sgd} the residual moves within $\mathcal R(X)$, so that $\Pi_{X^\top}r_k\equiv-\Pi_{X^\top}\eps$ is frozen and the iteration does not converge. This is immaterial for the stopping rule, since with $\kstop>1$ the threshold lies strictly above the irreducible residual $\norm{\Pi_{X^\top}\eps}\le\norm\eps\le\sqrt n\delta$, and for the analysis, where the frozen component is carried in the spectral tail (Remark~\ref{rem:rank}). This is the familiar picture for randomized row-action methods on inconsistent systems, where the iteration converges only to within a noise-dependent radius of the least-squares solution \cite{needell2010randomized,ma2015convergence}, unless the component of $y$ outside $\mathcal R(X)$ is removed explicitly \cite{zouzias2013randomized}; the discrepancy principle stops well above that radius.

The discrepancy principle is \emph{a posteriori} and data-driven: unlike the \emph{a priori} rules that require knowledge of the (typically unknown) smoothness of
$\theta^\star$, it needs only $\delta$. Dispensing with $\delta$ as well requires additional structure: no rule can be convergent and noise-level-free under a classical worst-case analysis \cite{bakushinskii1984remarks}, whereas under a stochastic noise model noise-level-free variants do exist \cite{becker2011regularization,harrach2020beyond,jahn2023noise,harrach2023regularizing}. We therefore treat $\delta$ as known throughout. Testing \eqref{eq:dp} requires the
residual, whose naive recomputation costs $O(nd)$ per step and
is prohibitive at scale. However, we note that it admits a simple recursion. By \eqref{eq:sgd},
$\theta_{k+1}-\theta_k=-\gamma\,x_{i_k}(x_{i_k}^\top\theta_k-y_{i_k})$, so
\begin{align}
\label{eq:residual-recursive}
r_{k+1}
&=X\theta_{k+1}-y
=(X\theta_k-y)-\gamma\,(Xx_{i_k})\,(x_{i_k}^\top\theta_k-y_{i_k})
\notag\\
&=r_k-\gamma\,(r_k)_{i_k}\,X x_{i_k}
=r_k-\gamma n\,(r_k)_{i_k}\,K_{\cdot,i_k},
\end{align}
using $x_{i_k}^\top\theta_k-y_{i_k}=(r_k)_{i_k}$, the $i_k$-th (already computed)
residual coordinate, and $Xx_{i_k}=n\,K_{\cdot,i_k}$, the $i_k$-th column of the
kernel matrix $K=XX^\top/n$ (whose computation costs $O(n^2d)$ once). Each step thus updates $r_k$ at cost $O(n)$  and the norm $\norm{r_k}$ needed to test the
discrepancy principle is available online, at negligible marginal cost. In the case where the data dominate, $n\ge d$, precomputing $K$ may not be viable; one then
evaluates $\norm{r_k}$ only at a fixed frequency per epoch.
\begin{remark}[Connection with reproducing kernel Hilbert spaces]
\label{rem:rkhs}
We formulate our results for linear predictors $f_\theta(x)=\langle\theta,x\rangle$ on
$\mathbb R^d$. They extend verbatim to linear predictors in a reproducing kernel Hilbert
space $(\Hd,\langle\cdot,\cdot\rangle_{\Hd})$: given a feature map
$\phi:\mathcal X\to\Hd$ one considers
$\{f_\theta:x\mapsto\langle\theta,\phi(x)\rangle_{\Hd}:\theta\in\Hd\}$, and it suffices to
replace the Euclidean inner product by $\langle\cdot,\cdot\rangle_{\Hd}$ and each $x_i$ by
$\phi(x_i)$. The two settings differ only in the status of $K$ in \eqref{eq:residual-recursive}: for a kernel method it is the Gram matrix, which is formed in any case, so tracking the residual is free; for linear or random-feature models the $O(n^2d)$ precomputation is worth paying only when $d$ is large relative to $n$.
\end{remark}

\paragraph{A continuous-time model of SGD.}
Writing the stochastic gradient as its mean
plus fluctuation, $\nabla f_{i_k}(\theta_k)=\tfrac1nX^\top(X\theta_k-y)+\xi_k$,
the noise $\xi_k:= X^\top R_k e_{i_k}$, where $R_k:=\diag(r_k)-\tfrac1n r_k\mathbf 1^\top$, has zero conditional mean and a \emph{multiplicative} covariance, which depends on the current residual and vanishes with the loss. This distinguishes SGD's noise from an additive isotropic one, keeps the dynamics far from a Langevin diffusion \cite{schertzer2024stochastic}, and lets constant-step SGD converge without averaging when the system is consistent \cite{varre2021last}. Among the diffusion approximations of SGD \cite{li2017stochastic,li2019stochastic,sirignano2017stochastic}, we use the variant of \cite{ali2020implicit,schertzer2024stochastic}, whose diffusion covariance matches that of the stochastic gradient exactly:
\begin{equation}
\label{eq:sde}
\mathrm d\theta_t
=-\tfrac1n X^\top(X\theta_t-y)\,\mathrm dt
+\sqrt{\tfrac{\gamma}{n}}\,X^\top R_t\,\mathrm dW_t,
\qquad
R_t:=\diag(r_t)-\tfrac1n r_t\mathbf 1^\top
{}=\diag(r_t)\Bigl(I_n-\tfrac1n\mathbf 1\mathbf 1^\top\Bigr),
\end{equation}
where $(W_t)$ is an $n$-dimensional Brownian
motion. The drift is the full-batch gradient flow, and $X^\top R_tR_t^\top X/n$ is the conditional covariance of $\xi_k$, the centering by $I_n-\frac1n\mathbf 1\mathbf 1^\top$ removing the mean gradient. The $k$-th iterate of \eqref{eq:sgd} corresponds to time $t=k\gamma$, so that the step size survives in the continuum limit only through the diffusion coefficient: $t/\gamma$ steps with fluctuations of order $\gamma$ accumulate a variance of order $\gamma t$. Accordingly $\gamma$ is absent from the drift; it enters the step-size condition through the noise-feedback strength $\nu$ of \eqref{eq:nu} and the bounds through $\gamma\mu^2$. We work with \emph{macroscopic} constant step sizes, independent of $n$, for which the noise feedback is not negligible.

The model is adapted to SGD in a precise sense: continuous time brings the chain rule of stochastic calculus, It\^o's formula, so that the residual, its spectral coordinates and its squared norm all satisfy closed equations read off from \eqref{eq:sde}, a closure that the discrete recursion \eqref{eq:sgd} does not offer. With $r_t:=X\theta_t-y$ and $\eta_t:=\theta_t-\theta^\star$ the residual and the error of \eqref{eq:sde}, so that $r_t=X\eta_t-\eps$ as in the discrete case, It\^o's formula yields the SDE fulfilled by the residual,
\begin{equation}
\label{eq:residual-sde}
\mathrm dr_t=-Kr_t\,\mathrm dt+\sqrt{\gamma n}\,K R_t\,\mathrm dW_t,
\end{equation}
driven by the same Brownian motion. The stopping rule \eqref{eq:dp} is extended to continuous time by
\begin{equation}
\label{eq:dp-cont}
\tau:=\inf\bigl\{t\ge0:\ \norm{r_t}\le\kstop\sqrt n\,\delta\bigr\},
\end{equation}
with $\inf\emptyset:=+\infty$; since $r$ has continuous paths, $\tau$ is a stopping time for the filtration generated by $W$.

Since the diffusion coefficient scales linearly with the residual, stochastic fluctuations cannot be treated as a negligible perturbation at macroscopic step sizes. The dynamics is thus not a perturbation of the gradient flow (Showalter regularization), and $\tau$ is the hitting time of a process whose noise amplitude scales with the residual being monitored.

\paragraph{The role of the Volterra equation.}
Applied to second moments, the same route yields the key estimate of the paper: Duhamel's formula and the It\^o isometry show that the expected loss $\E\norm{r_t}^2$ satisfies a closed Volterra inequality (Section~\ref{sec:fluct}), being bounded by the loss of the mean flow plus its own convolution with the kernel $k(u)=\gamma\sum_i\mu_i^2\lambda_i^2e^{-2\lambda_iu}$, where the incoherence $\mu_i^2$ of \eqref{tr:incoh} measures how delocalized the $i$-th singular vector of $X$ is. The mass of $k$ is the noise-feedback strength $\nu$ of \eqref{eq:nu}: the step-size condition $\nu\le\frac18$ makes the noise feedback a contraction, and $\nu=1$  is the ceiling beyond which this Volterra argument no longer controls the second moment. Volterra equations of this convolution type describe the loss of SGD on least squares exactly in the high-dimensional limit \cite{paquette2021sgd}, where the same kernel mass separates bounded from divergent dynamics, and they are inherited by a diffusion approximation of SGD under a delocalization condition on the singular vectors of the data \cite{paquette2025homogenization}. Our analysis is a finite-$n$, non-asymptotic counterpart for a fixed ill-conditioned design, in which the incoherence quantifies that delocalization and closes the inequality.

\paragraph{Related work.}
Several lines of work bear on this problem, but none provides the guarantee considered here. First, stopping an iterative scheme at a
data-driven index is classical in inverse problems \cite{engl1996} and in
statistics, where it has been analyzed for boosting
\cite{buhlmann2003boosting,zhang2005boosting}, for gradient descent in
reproducing kernel Hilbert spaces \cite{yao2007early,raskutti2014early}, and for
statistical inverse problems, where sharp adaptation results are available
\cite{blanchard2018optimal}. The noise-calibrated rule we use
goes back to Morozov \cite{morozov1966solution}; for the deterministic flow it is
adaptive and rate-optimal \cite{engl1996}, and its statistical counterparts have
been studied for spectral filters and conjugate gradients
\cite{blanchard2012discrepancy,blanchard2018optimal,celisse2021analyzing}.
All of this concerns dynamics that are deterministic. Regarding SGD, it is known to be a regularization method
\cite{jin2019regularizing, jin2020convergence, lu2022stochastic}, with variance-reduced variants
achieving order-optimal rates
\cite{jin2022svrg,jinqchen2025svrg,jinzhou2026rsvrg}; see
\cite{jin2025survey,ehrhardt2025guide} for recent surveys. Several of these analyses run at a constant step at the classical
scale $\gamma\lesssim(\max_i\norm{x_i}^2)^{-1}$, but the regularization is
tuned \emph{a priori}, i.e., explicitly depending on unknown smoothness properties of the ground truth. Where the discrepancy principle has been considered, the
guarantees stop short of a rate: \cite{jahn2020discrepancy} proves finite
termination and convergence in probability for decaying step sizes, and
\cite{jinqchen2025svrg} obtains rates only under an \emph{a priori} stopping index, the
discrepancy principle being shown merely to terminate. On the learning-theory side it is known that multi-pass SGD attains minimax rates under
capacity conditions with the number of passes as the regularization parameter at
fixed step size \cite{lin2017optimal,pillaudvivien2018statistical}, and averaged
single-pass SGD is optimal at \emph{large} step sizes
\cite{dieuleveut2016nonparametric} in the tradition of
\cite{caponnetto2007optimal}. There the tuned quantity is calibrated \emph{a priori} to $n$ and to
the unknown exponents. In neither strand is the residual used to decide when to
stop. On the modeling side, diffusion approximations of SGD
\cite{li2017stochastic,li2019stochastic,sirignano2017stochastic}
replace the iteration by an SDE whose diffusion coefficient encodes the sampling
noise; the surrogates of \cite{ali2020implicit,schertzer2024stochastic} match that
covariance exactly and remain faithful at \emph{macroscopic} step size. They serve
to study convergence to equilibrium and implicit bias, but carry no stopping theory and no minimax guarantee in the case of perturbed and ill-conditioned data; the high-dimensional theory of \cite{paquette2021sgd,paquette2025homogenization} describes the loss exactly, but for random data and without any stopping rule.

In short, what is missing is a rate for a data-driven stopping rule. Where a data-driven rule has been attached to a
randomized iteration, the guarantees stop at finite termination and convergence
\cite{jahn2020discrepancy,jinqchen2025svrg,huang2025early}, while the rate
theorems in the same works assume an \emph{a priori} stopping index. One work that does obtain rates evaluates the
discrepancy functional at the \emph{mean} of the process
\cite{zhang2023stochastic}, so that the rule is deterministic and its realization
requires running the corresponding gradient flow. Here the rule is pathwise, and supplying a high-probability optimality guarantee for it, at a constant macroscopic step size at which the fluctuations cannot be treated as a negligible perturbation of the signal, is the purpose of this paper.

\paragraph{Main result.}
Our analysis rests on the two classical assumptions recalled above and stated precisely in Section~\ref{sec:analysis}: the polynomial spectral profile $\lambda_j\le C_\lambda j^{-\beta}$, $\beta>1$, that is, the capacity condition \cite{caponnetto2007optimal}, and a H\"older source condition on the initial error, $\theta_0 -\theta^\star=\Sigma^r v$ with $r>0$ and $\norm v\le\rho$, the classical smoothness assumption of regularization theory \cite{engl1996}. Under the H\"older condition, the minimax rate is known to be
$\rho^{1/(2r+1)}\delta^{2r/(2r+1)}$; the natural small
parameter throughout is the noise-to-signal ratio $\delta/\rho$. Two incoherence parameters enter the analysis, quantifying how delocalized the singular vectors of $X$ are: a uniform one $\mu^2$ and a spectrum-weighted one
$\mu_\star^2\le\mu^2$ (see \eqref{tr:incoh} below); the step size $\gamma$ is constrained only through the noise-feedback strength $\nu:=\tfrac{\gamma\mu_\star^2\Tr(X^\top X)}{2n}$, required to be at most $1/8$, while the bounds involve $\gamma$ also through $\gamma\mu^2$. Section~\ref{sec:incoherence} finds empirically that the observed growth of \(\mu_\star^2\) is compatible with a logarithmic dependence on \(n\) in several generic cases.  Our main result states that the discrepancy principle
achieves the minimax-optimal algebraic rate, up to a logarithmic factor, with
high probability.

\begin{theorem}
\label{thm:main-log}
Let $X$ satisfy the polynomial spectral decay with exponent $\beta>1$ and let $\theta_0-\theta^\star$ satisfy the source condition with exponent $r>0$ and radius $\rho$, both as stated above. Assume that
\[
2r<\frac{\beta-1}{\beta}
\qquad\text{and}\qquad
\nu\le\frac18\qquad\text{and}\qquad
\kstop\ge\sqrt{\,16\big/\bigl(1-(e^{-2}+\nu/(1-\nu))\bigr)}.
\]
Then there exist constants $C,C'>0$, independent of
$n,d,\rho$, and $\delta$, such that, for $\delta/\rho$ sufficiently small,
\[
\Prob\left(
\norm{\theta_\tau-\theta^\star}
\le
C'\sqrt{\log(\rho/\delta)}\,
\rho^{\frac1{2r+1}}
\delta^{\frac{2r}{2r+1}}
\right)
\ge
1-C\frac{1+\gamma \mu^2}{\log(\rho/\delta)}.
\]
\end{theorem}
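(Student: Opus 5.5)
The plan is to follow the classical deterministic proof of optimality of the discrepancy principle for Showalter's flow and control the stochastic fluctuations by second-moment bounds and Markov's inequality. This is the natural source of the failure probability $(1+\gamma\mu^2)/\log(\rho/\delta)$. Set $t_\star:=(\rho/\delta)^{2/(2r+1)}$, the a priori optimal time for gradient flow. Write the error via Duhamel's formula from \eqref{eq:sde}:
\[
\eta_t=e^{-t\Sigma}\eta_0+(I-e^{-t\Sigma})X^\dagger\eps\ (\text{range part})+M_t ,
\]
where $M_t=\sqrt{\gamma/n}\int_0^t e^{-(t-s)\Sigma}X^\top R_s\,\mathrm dW_s$ is the stochastic convolution. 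The residual admits the analogous decomposition from \eqref{eq:residual-sde}. The deterministic parts are handled by the source condition $\eta_0=\Sigma^r v$ and spectral calculus. This gives $\norm{e^{-t\Sigma}\eta_0}\lesssim\rho t^{-r}$ and $\norm{(I-e^{-t\Sigma})X^\dagger\eps}\lesssim\sqrt t\,\delta$. The interpolation inequality $\norm{\Sigma^{r}w}\le\norm{\Sigma^{r+1/2}w}^{2r/(2r+1)}\norm{w}^{1/(2r+1)}$ is what converts residual information into error information at the stopping time.

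The key steps are as follows.
\begin{enumerate}[label=(\roman*)]
\item Use the Volterra inequality of Section~\ref{sec:fluct}. With $\nu\le 1/8$ it gives $\E\norm{r_t}^2\le (1-\nu')^{-1}\sup_{s\le t}\norm{\bar r_s}^2$ up to constants, where $\bar r$ is the mean-flow residual. The constant $e^{-2}+\nu/(1-\nu)$ in the hypothesis on $\kstop$ is exactly the resulting contraction factor. Consequently $\E\norm{M_t}^2\lesssim \gamma\mu^2\,\int_0^t \E\norm{r_s}^2/n\,\mathrm ds$-type bounds hold, weighted by the spectrum.
\item \emph{Upper bound on $\tau$.} At $t_1:=c\log(\rho/\delta)\,t_\star$, the mean residual is $\lesssim \sqrt n(\rho t_1^{-r-1/2}+\delta)$. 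The choice of $\kstop$ then makes $\Prob(\norm{r_{t_1}}>\kstop\sqrt n\delta)$ small via Markov applied to $\E\norm{r_{t_1}}^2$. This requires a decay of $\E\norm{r_t}^2$ along the flow and not just boundedness, and the $e^{-2}$ term suggests evaluating at a time where $e^{-2\lambda t}$ kills the signal part. Hence $\tau\le t_1$ with probability $1-O(1/\log)$.
\item \emph{Lower information at $\tau$.} By the definition \eqref{eq:dp-cont} and path continuity, $\norm{r_\tau}=\kstop\sqrt n\delta$ (or $\tau=0$). Thus $\norm{X\eta_\tau}\le(\kstop+1)\sqrt n\delta$, i.e.\ $\norm{\Sigma^{1/2}\eta_\tau}\lesssim\delta$.
\item \emph{Error at $\tau$.} Split $\eta_\tau=\Sigma^r w_\tau+(\text{noise and martingale remainder})$. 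Here $w_\tau=e^{-\tau\Sigma}v$ has norm $\le\rho$, and the remainder is bounded by $\sqrt{\tau}\,\delta\le\sqrt{t_1}\delta$, giving $\sqrt{\log}\,\rho^{1/(2r+1)}\delta^{2r/(2r+1)}$. Interpolation on the first part, combined with step (iii), yields $\rho^{1/(2r+1)}\delta^{2r/(2r+1)}$.
\end{enumerate}

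For the martingale remainder I would bound $\sup_{t\le t_1}\norm{M_t}$. Doob's maximal inequality cannot be applied directly because of the convolution kernel. Instead I would work with $\E\norm{M_{t\wedge\tau}}^2$, where stopping at $\tau$ helps because $\norm{r_s}\ge\kstop\sqrt n\delta$ before $\tau$ and $\norm{r_s}$ stays bounded by its second-moment control. The It\^o isometry gives $\E\norm{M_t}^2\le\gamma\mu^2\sum_i\int_0^t e^{-2\lambda_i(t-s)}\lambda_i\,\E\norm{r_s}^2/n\,\mathrm ds$, using incoherence to replace $\norm{u_i^\top\diag(r_s)}^2$ by $\mu_i^2\norm{r_s}^2/n$. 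Summing over $i$ yields $\sum_i\min(1,\lambda_i t)$. Under $\lambda_j\lesssim j^{-\beta}$ this is $\lesssim t^{1/\beta}$, and the condition $2r<(\beta-1)/\beta$ ensures that $t_1^{1/\beta}\times(\text{residual scale})$ stays below the minimax error squared. Markov's inequality with threshold $\log(\rho/\delta)$ times the target then gives the probability loss $\gamma\mu^2/\log$.

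The main obstacle is this fluctuation step: the error at a random time must be controlled when $\theta_\tau$ depends on the whole path. I would first try to dominate $\norm{M_\tau}\le\sup_{t\le t_1}\norm{M_t}$ by a factorization/chaining trick. Failing that, I would use monotonicity of $\norm{\Sigma^{r}\cdot}$-type Lyapunov functions, since It\^o's formula gives a supermartingale for $\norm{\eta_t}^2$ up to a drift term. Optional stopping applied to that supermartingale would avoid the supremum entirely.
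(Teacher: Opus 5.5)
Your step (ii) fails, and it is the heart of the argument. No single Markov or Chebyshev bound at a fixed time $t_1$ can make $\Prob(\tau>t_1)$ vanish. The mean residual $\bar r_{t_1}=e^{-Kt_1}(X\eta_0-\eps)$ still carries the unfitted noise, of squared norm up to order $n\delta^2$ when $\eps$ lies in directions with $\lambda_it_1\ll1$ or in $\mathcal N(X^\top)$. Moreover, \eqref{eq:variance-envelope} only gives $V_{t_1}\lesssim\gamma\mu^2n\delta^2$, again of the order of the squared threshold. Hence Markov only yields $\Prob(\tau>t_1)\le\E\norm{r_{t_1}}^2/(\kstop^2n\delta^2)\lesssim(1+C\gamma\mu^2)/\kstop^2$, whatever constant $c$ you put in $t_1$; this is the observation preceding \eqref{eq:t1}. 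The decay of $\E\norm{r_t}^2$ you ask for does not hold unconditionally, and a fixed $\kstop$ cannot turn a constant into $O(1/\log(\rho/\delta))$.

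The missing idea is to compound $\asymp\log(\rho/\delta)$ blocks of length $t_b\asymp\tprior$, restricted to the event that stopping has not yet occurred. By the Markov property and a conditional Volterra inequality, $\E[\norm{r_{a+t_b}}^2\mid\mathcal F_a]\le q\norm{r_a}^2+\mathcal E_a^{\mathrm{tail}}$ with $q=e^{-2}+\nu/(1-\nu)$ (Lemma~\ref{lem:contraction}). The deterministic part of the tail is surely at most $4n\delta^2$, and on $\{\tau>a_k\}$ one has $n\delta^2<\norm{r_{a_k}}^2/\kstop^2$. So it is absorbed into the contraction factor $q+8/\kstop^2\le(1+q)/2$; this absorption, not a one-shot Markov step, is where $\kstop\ge\sqrt{16/(1-q)}$ is used. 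The fluctuating part of the tail has expectation $O(\gamma\mu^2n\delta^2(\delta/\rho)^{c_G})$ thanks to \ref{ass:window}. Iterating on $m_k=\E[\mathbf 1_{\{\tau>a_k\}}\norm{r_{a_k}}^2]/(n\delta^2)$ gives $\Prob(\tau>\bar T)\lesssim(1+\gamma\mu^2)(\delta/\rho)^{c_G}$ with $\bar T\asymp\log(\rho/\delta)\,\tprior$.

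The second gap is the one you flag yourself. Step (iv) needs $\norm{\tilde r_\tau}$, to pass from $\norm{\Sigma^{1/2}\eta_\tau}\lesssim\delta$ to $\norm{\Sigma^{1/2}e^{-\tau\Sigma}\eta_0}$ before interpolating. It also needs $\norm{M_\tau}$. Both are at a path-dependent time and in all spectral directions, so you need a maximal inequality for a stochastic convolution with only logarithmic loss. Your fallbacks are not carried out, and the simplest one fails. Optional stopping for the supermartingale $\norm{\tilde\eta_t}^2-\frac{\gamma}{n}\int_0^t\norm{X^\top R_s}_F^2\,\mathrm ds$ (or for $\norm{\eta_t}^2$) discards the damping $e^{-\lambda_i(t-s)}$ of early fluctuations in well-resolved directions. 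It returns bounds of order $\nu\rho^2$ (resp.\ $\rho^2$), far above $M^2$.

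The paper avoids the issue by splitting at $\lambda_{**}=1/\bar T$ instead of interpolating.
\begin{itemize}
\item On the head no fluctuation control is needed: $\eta_i(\tau)=(r_i(\tau)+\eps_i)/\sqrt{n\lambda_i}$ and $\lambda_i^{-1}\le\bar T$ give, pathwise on $\{\tau\le\bar T\}$, $\sum_{\lambda_i\ge\lambda_{**}}\eta_i(\tau)^2\le2(\kstop^2+1)\delta^2\bar T\asymp\log(\rho/\delta)M^2$. This is where the factor $\sqrt{\log(\rho/\delta)}$ comes from.
\item On the tail, $e^{\lambda_is}\le e$ for $s\le\bar T$, so $\tilde\eta_i$ is a bounded multiple of a genuine martingale and Doob's $L^2$ inequality applies. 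Your It\^o-isometry, incoherence and compatibility computation, restricted to the tail (with $\sum_{\lambda_i<\lambda_{**}}\lambda_i\lesssim\bar T^{-(\beta-1)/\beta}$ and Lemma~\ref{lem:integrated-loss}), then gives $\E[F_\tau\mathbf 1_{\{\tau\le\bar T\}}]\lesssim\gamma\mu^2M^2$.
\end{itemize}
Markov at level $\log(\rho/\delta)M^2$ then yields the $\gamma\mu^2/\log(\rho/\delta)$ term in the failure probability.
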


First, the minimax rate is attained up to a logarithmic factor. Restrictions on the range of smoothness are familiar from statistical inverse problems with random data noise \cite{bissantz2007convergence}. Here the observation noise is deterministic, and the discrepancy principle for the deterministic flow is rate-optimal without any logarithmic loss. 
The logarithmic loss in our bound arises from the localization of the random stopping time: our estimates control it only up to a horizon that is logarithmically larger than the corresponding deterministic stopping scale.

Second, we require a compatibility condition between the spectral decay $\beta$ and the smoothness $r$, a finite-qualification phenomenon with counterparts in earlier analyses of SGD with decaying step sizes for inverse problems \cite{jin2019regularizing, jahn2020discrepancy}. For decaying microscopic step sizes it might be removable: \cite{jin2021saturation} obtain the optimal rate without saturation for the step sizes $\gamma_j=c_0j^{-a}$, $a\in[0,1)$, of \cite{jin2019regularizing,jahn2020discrepancy}, with a microscopic initial step $c_0=O(n^{-1})$ and a stopping index calibrated to the unknown regularity, restricted to be large enough.

Third, the confidence tends to one only logarithmically, and it is the uniform incoherence
$\mu^2$ that enters it, not the weighted $\mu_\star^2$ governing the step size; the error
bound involves neither.

\paragraph{Organization.}
The remainder of the paper is organized as follows. In the next section we first state the
assumptions precisely, and then derive the results step by step: in Section \ref{sec:fluct} we derive an upper bound for the fluctuations, in Section \ref{sec:thm1} we prove stability (no late stopping), and in Section \ref{sec:thm2} we establish the minimax-optimal algebraic rate up to a logarithmic factor, an extended version of Theorem \ref{thm:main-log}. Two features drive the analysis: the fluctuations cannot be treated as a negligible perturbation of the mean flow, and the method is not filter-based, so that the interpolation arguments of regularization theory are unavailable and the analysis proceeds directly on the residual dynamics \eqref{eq:residual-sde}. Section~\ref{sec:numerics} reports numerical
experiments on the incoherence assumption and on the stopping rule. We conclude by collecting proofs of auxiliary results in Section~\ref{sec:proofs}.

\section{Analysis}
\label{sec:analysis}

We first fix notation, recalling from Section~\ref{sec:intro} that $\mathcal R(\cdot)$, $\mathcal N(\cdot)$ and $\Pi_{(\cdot)}$ denote range, null space and orthogonal projection onto the null space. All random objects are defined on a filtered probability space $(\Omega,\mathcal F,(\mathcal F_t)_{t\ge0},\Prob)$ carrying the Brownian motion $W$ of \eqref{eq:sde}, $(\mathcal F_t)$ being its augmented natural filtration; the coefficients of \eqref{eq:sde} are Lipschitz, so the equation has a unique strong solution with continuous paths, and $\tau$ in \eqref{eq:dp-cont} is an $(\mathcal F_t)$-stopping time. Recall that $X\in\R^{n\times d}$ is the design matrix and that $\Sigma=\tfrac1nX^\top X$ and $K=\tfrac1nXX^\top$ are the empirical covariance and the empirical kernel; both share the nonzero spectrum
$\lambda_1\ge\lambda_2\ge\dots\ge\lambda_m>0$, where $m=\rank X$. Writing $\Sigma=\sum_{j\le m}\lambda_jv_jv_j^\top$ and
$u_j:=(n\lambda_j)^{-1/2}Xv_j$, $j\le m$, for the associated left singular vectors, so that
$v_j^\top x_i=\sqrt{n\lambda_j}\,u_{j,i}$, we complete $(u_j)_{j\le m}$ by an orthonormal basis
$(u_j)_{m<j\le n}$ of $\mathcal N(K)=\mathcal R(X)^\perp$ and set $\lambda_j:=0$ for $j>m$, so that
$(\lambda_j,u_j)_{j\le n}$ is a full eigendecomposition of $K$; sums and maxima over $j$ in
\eqref{tr:incoh} run over $j\le m$. We set
\begin{equation}\label{tr:incoh}
  \kdat:=\Tr K=\frac1n\sum_{i=1}^n\norm{x_i}^2,
  \qquad
  \mu_j^2:=n\max_{i\le n}u_{j,i}^2,
  \qquad
  \mu^2:=\max_j\mu_j^2,
  \qquad
  \mu_\star^2:=\frac1{\kdat}\sum_j\mu_j^2\lambda_j .
\end{equation}
Thus $\kdat$ is the average squared sample norm, while $\mu^2$ and $\mu_\star^2$ measure how
the singular vectors of $X$ spread over the samples, uniformly and weighted by the spectrum
respectively; each $u_j$ being a unit vector, $1\le\mu_\star^2\le\mu^2\le n$. Finally,
\begin{equation}\label{eq:nu}
  \nu:=\tfrac{\gamma}{2}\,\mu_\star^2\,\kdat
\end{equation}
denotes the noise-feedback strength, through which the step size $\gamma$ is constrained; $\gamma$ enters the bounds below also through the product $\gamma\mu^2$, which is not a function of $\nu$. We work under the following assumptions.

\begin{assumption}\, \vspace{0.1cm}
\label{ass:main}
\begin{enumerate}[label=(A\arabic*),ref=(A\arabic*),leftmargin=*]
  \item\label{ass:spectrum} \emph{(Polynomial spectral decay.)} There are $\beta>1$ and
        $ C_\lambda>0$ with
        $\lambda_j\le C_\lambda j^{-\beta}$ for all
        $j\le\operatorname{rank}(X)$.
  \item\label{ass:source} \emph{(Source condition.)} $\theta_0-\theta^\star=\Sigma^rv$ for
        some $r>0$ and $v\in\R^d$ with $\norm v\le\rho$.
        In particular $\eta_0:=\theta_0-\theta^\star\in\mathcal R(\Sigma) =\mathcal R(X^\top)$; since every increment of \eqref{eq:sde} lies in $\mathcal R(X^\top)$
        as well, $\Pi_X\eta_t\equiv0$ for all $t\ge0$.
  \item\label{ass:window} \emph{(Compatibility.)} $2r<\dfrac{\beta-1}{\beta}$.
  \item\label{ass:stepsize} \emph{(Step size.)} $\nu\le\dfrac18$, that is,
        $\gamma\mu_\star^2\kdat\le\dfrac14$.
\end{enumerate}
\end{assumption}

\ref{ass:spectrum} is the only structural condition imposed on the design: it forces $\kdat\le C_\lambda\zeta(\beta)$, whereas the incoherences are left free and enter the results explicitly.

\begin{remark}[Rank-deficient designs]
\label{rem:rank}
 The case \(m<n\), for instance \(d<n\), where interpolation may fail, requires no separate treatment. Since $u_j^\top K=0$ for $j>m$, the residual dynamics \eqref{eq:residual-sde} has neither drift nor noise along $\mathcal N(K)$: $\langle r_t,u_j\rangle=-\langle\eps,u_j\rangle$ for all $t\ge0$ and $j>m$, so that $\norm{r_t}^2=\sum_{j\le m}\langle r_t,u_j\rangle^2+\norm{\Pi_{X^\top}\eps}^2$ with $\norm{\Pi_{X^\top}\eps}^2\le n\delta^2$, a frozen contribution below the discrepancy threshold since $\kstop>1$. In the analysis these directions belong to the spectral tail, where their contribution is bounded by $\norm\eps^2$ like that of the other tail directions (Lemma~\ref{lem:tail-det}), and they carry no fluctuation, every fluctuation estimate having a factor $\lambda_j^2$. The parameter error never sees them: by \ref{ass:source} and \eqref{eq:sde}, $\eta_t\in\mathcal R(X^\top)=\operatorname{span}(v_j)_{j\le m}$ for all $t$, which is why the decomposition of $\norm{\eta_\tau}$ in Section~\ref{sec:thm2} runs over $j\le m$. The full-row-rank case $m=n$ is the one in which the unstopped dynamics interpolates the data (Section~\ref{sec:intro}); it plays no special role below.
\end{remark}

\begin{remark}[Relation to capacity and source conditions in RKHS]
Assumption \ref{ass:spectrum} implies a capacity condition and \ref{ass:source} is a source
condition in the learning-theory literature. Under \ref{ass:spectrum},
$\Tr(\Sigma^{1-a})<\infty$ holds for every $a<\frac{\beta-1}{\beta}$, and
\ref{ass:source} states $\Tr(\eta_0\eta_0^\top\Sigma^{-2r})\le\rho^2$; in the
notation of \cite{varre2021last} these are $a$-regularity of the features and
$2r$-regularity of the optimum, so that \ref{ass:window} reads: the regularity
of the solution lies strictly below the capacity ceiling.
\end{remark}

\begin{remark}[On the step-size condition \ref{ass:stepsize}]
\label{rem:stepsize}
For SGD on least squares, the step size is classically restricted through a fourth-moment condition on the features, $\E[\norm x^2xx^\top]\preceq R^2\Sigma$ with $\gamma R^2\lesssim1$ \cite{bach2013nonstrongly,dieuleveut2017harder,jain2018parallelizing}, which follows from a bound $\kappa_4$ on the kurtosis of the covariates in every direction, with $R^2=\kappa_4\Tr\Sigma$ \cite{dieuleveut2017harder}. This isotropy-type condition bounds the energy injected by the gradient noise along each eigendirection $v_j$ of $\Sigma$ by $\kappa_4\lambda_j$ times the excess risk, which is what closes the second-moment recursion. It is not natural for a fixed ill-conditioned design, and the incoherence takes its place: under uniform sampling of the rows of $X$, the same bound holds along $v_j$ with $\mu_j^2$ in place of $\kappa_4$, because $\langle x_i,v_j\rangle^2=n\lambda_ju_{j,i}^2\le\mu_j^2\lambda_j$ for every $i$ (this is the incoherence step \eqref{eq:coordinatewise-integral} of the proofs). Thus $\mu_j^2$ provides a direction-dependent upper bound on the empirical kurtosis in the $j$-th singular direction, and \ref{ass:stepsize}, that is, $\gamma\mu_\star^2\Tr\Sigma\le\frac14$, is the natural counterpart of $\gamma\kappa_4\Tr\Sigma\lesssim1$, with the spectrum-weighted incoherence playing the role of the kurtosis constant. Section~\ref{sec:numerics} shows that \ref{ass:stepsize} holds at a macroscopic step size, independent of $n$ up to logarithms, in a large set of cases: translation-invariant and Gaussian designs, and data-driven designs (MNIST, kernel matrices), for which the admitted step exceeds that of capacity-based conditions by one to two orders of magnitude.
\end{remark}

For the analysis we decompose the process into mean and fluctuations, writing $\bar r_t:=\E[r_t]$, $\tilde r_t:=r_t-\bar r_t$, and analogously
$\bar\eta_t,\tilde\eta_t$ (recall that $r_t = X \eta_t - \varepsilon$). We set
\[
\Lc_t:=\E\norm{r_t}^2=\norm{\bar r_t}^2+V_t,
\qquad
V_t:=\E\norm{\tilde r_t}^2.\]
We first recall the classical mean-flow analysis.

\subsection{Mean flow}
\label{sec:mean-flow}

The coefficients of \eqref{eq:sde} are Lipschitz, so all moments of $(\theta_t)$ are finite on compact time intervals and the stochastic integral in \eqref{eq:residual-sde} is a square-integrable martingale. Taking expectations in \eqref{eq:sde}, $\bar\theta_t:=\E\theta_t$ solves $\frac{\mathrm d}{\mathrm dt}\bar\theta_t=-\nabla F(\bar\theta_t)$: the mean of the model is the gradient flow of $F$ started at $\theta_0$, that is, Showalter regularization, whose Euler discretization is the Landweber iteration. Accordingly,
\begin{equation}
\label{eq:mean-flow}
\bar r_t=e^{-Kt}r_0,\qquad r_0=X\eta_0-\eps,
\end{equation}
and, by variation of constants,
\begin{equation}
\label{eq:mean-eta}
\bar\eta_t
=
e^{-\Sigma t}\eta_0
+
\phi_t(\Sigma)\,\frac1nX^\top\eps,
\qquad
\phi_t(\lambda):=\frac{1-e^{-\lambda t}}{\lambda}\ \ (\lambda>0),
\qquad
\phi_t(0):=t.
\end{equation}
Both split into an approximation part, describing how fast the noise-free data $X\theta^\star$ are fitted, and a noise-propagation part:
\[
\bar r_t=\underbrace{e^{-Kt}X\eta_0}_{=:\,\bar r_t^{\mathrm{app}}}-\underbrace{e^{-Kt}\eps}_{=:\,\bar r_t^{\mathrm{noise}}},
\qquad
\bar\eta_t=\underbrace{e^{-\Sigma t}\eta_0}_{\text{bias}}+\underbrace{\phi_t(\Sigma)\tfrac1nX^\top\eps}_{\text{propagated noise}}.
\]
Balancing the bias and noise scales $\rho t^{-r}$ and $\delta t^{1/2}$ of \eqref{eq:bias-decay}--\eqref{eq:noise-eta} below defines the \emph{a priori}, or oracle, stopping time and the associated error scale,
\begin{equation}
\label{eq:tprior}
\tprior:=\Bigl(\frac{\rho}{\delta}\Bigr)^{\frac{2}{2r+1}},
\qquad
M:=\rho^{\frac1{2r+1}}\delta^{\frac{2r}{2r+1}}=\rho\,\tprior^{-r}=\delta\,\tprior^{1/2};
\end{equation}
$M$ is the minimax scale over the source class \ref{ass:source} under deterministic noise of level $\delta$.

\begin{proposition}[Mean flow]
\label{prop:mean-flow}
Under \ref{ass:source}, for every $t>0$,
\begin{align}
\norm{\bar r_t^{\mathrm{app}}}&\le \sqrt n\,\rho\,c_r\,t^{-(r+\frac12)},
&c_r&:=\Bigl(\tfrac{2r+1}{2e}\Bigr)^{\frac{2r+1}{2}},
\label{eq:app-decay}\\
\norm{\bar r_t^{\mathrm{noise}}}&\le\norm{\eps}\le\sqrt n\,\delta,
&&\text{and $t\mapsto\norm{\bar r_t^{\mathrm{noise}}}$ is nonincreasing,}
\label{eq:noise-res}\\
\norm{e^{-\Sigma t}\eta_0}&\le b_r\, \rho \, t^{-r},
&b_r&:=(r/e)^r,
\label{eq:bias-decay}\\
\norm{\phi_t(\Sigma)\tfrac1nX^\top\eps}&\le \sqrt{c_\phi} \, \delta\, t^{1/2},
& c_\phi&:=\sup_{u>0}\tfrac{(1-e^{-u})^2}{u}\le\tfrac12 .
\label{eq:noise-eta}
\end{align}
Consequently, for every $t\in[c\,\tprior,C\,\tprior]$ with $0<c\le1\le C$,
\begin{equation}
\label{eq:flow-minimax}
\norm{\bar\eta_t}\le\bigl(b_r c^{-r}+\sqrt{C/2}\bigr)M,
\qquad
\norm{\bar r_t} \le \bigl(c_r c^{-(r+\frac{1}{2})} + 1\bigr) \sqrt{n}\, \delta.
\end{equation}
\end{proposition}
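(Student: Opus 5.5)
The argument is a direct computation in the eigenbasis: every quantity in Proposition~\ref{prop:mean-flow} is a spectral function of $\Sigma$ or $K$ applied to a fixed vector. I would reduce each bound to a scalar supremum over $\lambda>0$, using $\norm{f(\Sigma)w}\le\sup_{\lambda}|f(\lambda)|\,\norm w$, and then combine them on the window $[c\,\tprior,C\,\tprior]$ via \eqref{eq:tprior}.

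\emph{Residual approximation part.} By \ref{ass:source}, $X\eta_0=X\Sigma^rv$, so
\[
\norm{e^{-Kt}X\eta_0}^2=\eta_0^\top e^{-\Sigma t}X^\top Xe^{-\Sigma t}\eta_0=n\,\norm{\Sigma^{1/2}e^{-\Sigma t}\Sigma^r v}^2 .
\]
Hence
\[
\norm{\bar r^{\mathrm{app}}_t}\le\sqrt n\,\rho\,\sup_{\lambda>0}\lambda^{r+1/2}e^{-\lambda t}.
\]
The supremum of $\lambda^pe^{-\lambda t}$ is $(p/(et))^p$, and with $p=r+\tfrac12$ this gives exactly $c_r t^{-(r+1/2)}$, which is \eqref{eq:app-decay}.

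\emph{Bias.} The same computation with exponent $p=r$ gives $\norm{e^{-\Sigma t}\Sigma^rv}\le (r/e)^r t^{-r}\rho$, which is \eqref{eq:bias-decay}.

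\emph{Residual noise part.} $e^{-Kt}$ is a contraction, so $\norm{\bar r^{\mathrm{noise}}_t}\le\norm\eps\le\sqrt n\delta$. Monotonicity in $t$ follows because $\norm{e^{-Kt}\eps}^2=\sum_j e^{-2\lambda_jt}\langle\eps,u_j\rangle^2$ is a sum of nonincreasing terms. This gives \eqref{eq:noise-res}.

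\emph{Propagated noise.} Write $\frac1nX^\top\eps$ with the SVD. Then
\[
\norm{\phi_t(\Sigma)\tfrac1nX^\top\eps}^2=\sum_{j\le m}\phi_t(\lambda_j)^2\,\tfrac1n\lambda_j\,\langle\eps,u_j\rangle^2 ,
\]
using $\norm{\tfrac1n X^\top w}^2=\tfrac1n w^\top Kw$. Now $\lambda\phi_t(\lambda)^2=(1-e^{-\lambda t})^2/\lambda = t\cdot (1-e^{-u})^2/u$ with $u=\lambda t$, which is at most $c_\phi t$. Therefore the squared norm is at most $c_\phi t\norm\eps^2/n\le c_\phi t\delta^2$, which is \eqref{eq:noise-eta}. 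The bound $c_\phi\le\tfrac12$ is elementary: $(1-e^{-u})^2\le u\wedge 1$ handles $u\ge2$ (giving $\le 1/u\le 1/2$), and for $u\le2$ one checks $(1-e^{-u})^2\le u/2$, for instance via $1-e^{-u}\le u/(1+u/2)$.

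\emph{Consequences \eqref{eq:flow-minimax}.} For $t\in[c\,\tprior,C\,\tprior]$, the bias is at most $b_r\rho(c\,\tprior)^{-r}=b_rc^{-r}M$. The noise term is at most $\sqrt{c_\phi}\,\delta(C\tprior)^{1/2}\le\sqrt{C/2}\,M$, since $M=\rho\tprior^{-r}=\delta\tprior^{1/2}$. For the residual, note that $\rho\tprior^{-(r+1/2)}=M\tprior^{-1/2}=\delta$, so
\[
\norm{\bar r_t}\le \sqrt n\rho c_r (c\tprior)^{-(r+1/2)}+\sqrt n\delta=(c_rc^{-(r+1/2)}+1)\sqrt n\delta .
\]

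None of these steps is a real obstacle. The only point needing care is the scalar inequality $c_\phi\le\tfrac12$.
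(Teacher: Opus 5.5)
Your proposal is correct and follows the paper's proof essentially step by step. Both use the intertwining identity $Xg(\Sigma)=g(K)X$ to reduce every bound to the scalar facts $\sup_{\lambda>0}\lambda^ae^{-\lambda t}=(a/(et))^a$ and $\lambda\phi_t(\lambda)^2\le c_\phi t$, and then insert $t\in[c\,\tprior,C\,\tprior]$ together with $M=\rho\tprior^{-r}=\delta\tprior^{1/2}$. Your check of $c_\phi\le\frac12$ is a small addition the paper leaves implicit, and it needs no case split: $1-e^{-u}\le 2u/(2+u)$ already gives $(1-e^{-u})^2/u\le 4u/(2+u)^2\le\frac12$ for all $u>0$, the last step being $(u-2)^2\ge0$.
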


\begin{proof}
By the intertwining identity $Xg(\Sigma)=g(K)X$ and \ref{ass:source}, $\norm{\bar r_t^{\mathrm{app}}}^2=n\,v^\top\Sigma^{2r+1}e^{-2\Sigma t}v$ and $\norm{\phi_t(\Sigma)\frac1nX^\top\eps}^2=\frac1n\eps^\top K\phi_t(K)^2\eps$. Since $\sup_{\lambda>0}\lambda^ae^{-\lambda t}=(a/et)^a$ and $\norm v\le\rho$, the first identity gives \eqref{eq:app-decay} ($a=2r+1$, with $2t$ in place of $t$) and the same bound with $a=r$ gives \eqref{eq:bias-decay}; \eqref{eq:noise-res} holds because $\norm{e^{-Kt}\eps}^2=\sum_{i\le n}e^{-2\lambda_it}\langle\eps,u_i\rangle^2$ is nonincreasing in $t$; and \eqref{eq:noise-eta} follows from $\lambda\phi_t(\lambda)^2=t\,(1-e^{-\lambda t})^2/(\lambda t)\le c_\phi t$ and $\norm\eps^2\le n\delta^2$. Inserting $t\in[c\,\tprior,C\,\tprior]$ and \eqref{eq:tprior} into \eqref{eq:app-decay}--\eqref{eq:noise-eta} gives \eqref{eq:flow-minimax}.
\end{proof}

As \eqref{eq:tprior} requires $r$ and $\rho$, it is of limited practical interest; for the mean flow, the discrepancy principle attains the rate $M$ without this knowledge \cite{engl1996}.

The remainder of the paper is devoted to showing that the macroscopic fluctuations neither delay the stopping time beyond a logarithmic multiple of the a priori scale (Theorem~\ref{thm:no-late-hp}) nor prevent the error at the stopping time from attaining the optimal algebraic rate up to a logarithmic factor (Theorem~\ref{thm:error}) .

\subsection{Fluctuations: a Volterra second-moment inequality}
\label{sec:fluct}

The fluctuations are controlled through a closed inequality for the expected loss $\Lc_t=\E\norm{r_t}^2$. Let
\[
k(u):=\gamma\sum_{i=1}^m\mu_i^2\lambda_i^2e^{-2\lambda_iu},\qquad u\ge0,
\]
let $(f*g)(t):=\int_0^tf(t-s)g(s)\,\mathrm ds$ denote the causal convolution, and $k^{*\ell}$ the $\ell$-fold convolution power of $k$.

\begin{proposition}[Volterra inequality]
\label{prop:volterra}
For every $t\ge0$,
\begin{equation}
\label{eq:volterra}
\Lc_t\le\norm{\bar r_t}^2+(k*\Lc)(t),
\qquad
V_t\le(k*\Lc)(t),
\end{equation}
and the kernel mass is
\begin{equation}
\label{eq:kernel-mass}
\|k\|_{L^1(0,\infty)}=\frac{\gamma}{2}\sum_{i=1}^m\mu_i^2\lambda_i=\frac{\gamma}{2}\mu_\star^2\Tr K=\nu .
\end{equation}
If $\nu<1$, the resolvent $\Res:=\sum_{\ell\ge1}k^{*\ell}$ converges in $L^1(0,\infty)$, with
\begin{equation}
\label{eq:resolvent}
\|\Res\|_{L^1(0,\infty)}=\frac{\nu}{1-\nu},
\end{equation}
and for every $t\ge0$,
\begin{equation}
\label{eq:resolvent-bound}
\Lc_t\le\norm{\bar r_t}^2+\bigl(\Res*\norm{\bar r_\cdot}^2\bigr)(t),
\end{equation}
\begin{equation}
\label{eq:V-resolvent}
V_t\le\bigl(\Res*\norm{\bar r_\cdot}^2\bigr)(t),
\qquad
\sup_{t\ge0}\Lc_t\le\frac{\norm{r_0}^2}{1-\nu}.
\end{equation}
Under \ref{ass:stepsize}, $\nu\le\frac18$ and $\|\Res\|_{L^1(0,\infty)}\le\frac17$.
\end{proposition}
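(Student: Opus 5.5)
The plan is to work in the eigenbasis $(u_j)$ of $K$ and use Duhamel's formula for the residual SDE \eqref{eq:residual-sde}. By variation of constants,
\[
r_t=e^{-Kt}r_0+\sqrt{\gamma n}\int_0^te^{-K(t-s)}KR_s\,\mathrm dW_s ,
\]
and the first term is $\bar r_t$ by \eqref{eq:mean-flow}. The stochastic integral is a square-integrable martingale with zero mean, so $\tilde r_t$ equals this integral. The It\^o isometry then gives
\[
V_t=\gamma n\int_0^t\E\norm{e^{-K(t-s)}KR_s}_F^2\,\mathrm ds ,
\]
and with orthogonality $\Lc_t=\norm{\bar r_t}^2+V_t$. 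Both inequalities in \eqref{eq:volterra} therefore reduce to the pointwise bound
\[
\gamma n\,\norm{e^{-Ku}KR_s}_F^2\le k(u)\norm{r_s}^2 .
\]

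To prove that bound, expand in the eigenbasis:
\[
\norm{e^{-Ku}KR_s}_F^2=\sum_{j\le m}e^{-2\lambda_ju}\lambda_j^2\norm{R_s^\top u_j}^2 .
\]
Here $R_s^\top u_j=(I-\tfrac1n\mathbf 1\mathbf 1^\top)\diag(r_s)u_j$. The centering projection is a contraction, so
\[
\norm{R_s^\top u_j}^2\le\sum_i r_{s,i}^2u_{j,i}^2\le\frac{\mu_j^2}{n}\norm{r_s}^2 .
\]
This is the incoherence step, and directions $j>m$ drop out because $\lambda_j=0$. Multiplying by $\gamma n$ and summing over $j$ gives exactly $k(u)\norm{r_s}^2$. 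Taking expectations and integrating in $s$ yields $V_t\le(k*\Lc)(t)$, and adding $\norm{\bar r_t}^2$ gives the first inequality.

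The kernel mass follows from $\int_0^\infty\lambda^2e^{-2\lambda u}\,\mathrm du=\lambda/2$, which gives $\|k\|_{L^1}=\tfrac\gamma2\sum_i\mu_i^2\lambda_i$. This equals $\nu$ by the definitions of $\mu_\star^2$ and $\kdat$. For the resolvent, $k\ge0$ gives $\|k^{*\ell}\|_{L^1}=\nu^\ell$ by Fubini, so for $\nu<1$ the series converges in $L^1$ with mass $\sum_{\ell\ge1}\nu^\ell=\nu/(1-\nu)$, which is \eqref{eq:resolvent}.

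To obtain \eqref{eq:resolvent-bound}, iterate the inequality $N$ times, using positivity of $k$ so that convolution preserves inequalities:
\[
\Lc\le\norm{\bar r}^2+\sum_{\ell=1}^{N}k^{*\ell}*\norm{\bar r}^2+k^{*(N+1)}*\Lc .
\]
The remainder is at most $\nu^{N+1}\sup_{s\le t}\Lc_s$, which tends to $0$ because $\Lc$ is bounded on $[0,t]$ by the finite moments on compact intervals. Letting $N\to\infty$ gives \eqref{eq:resolvent-bound}. The same iteration applied to $V\le k*\Lc$ gives the first part of \eqref{eq:V-resolvent}, since $k*\Lc\le k*\norm{\bar r}^2+k*(\Res*\norm{\bar r}^2)=\Res*\norm{\bar r}^2$.

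For the uniform bound in \eqref{eq:V-resolvent}, set $S_T:=\sup_{t\le T}\Lc_t$. By \eqref{eq:noise-res}-type monotonicity, $\norm{\bar r_t}=\norm{e^{-Kt}r_0}\le\norm{r_0}$, so the Volterra inequality gives $\Lc_t\le\norm{r_0}^2+\nu S_T$. Hence $S_T\le\norm{r_0}^2/(1-\nu)$, and this holds for every $T$. Finally, under \ref{ass:stepsize} we have $\nu\le1/8$, so $\nu/(1-\nu)\le1/7$.

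The main obstacle is the pointwise Frobenius bound. Its difficulty lies in handling the centering matrix and the diagonal residual correctly, so that exactly $\mu_j^2$ and not a cruder constant appears. A secondary difficulty is justifying the limit in the resolvent iteration, which requires a priori local boundedness of $\Lc$; this comes from the Lipschitz coefficients.
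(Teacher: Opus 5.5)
Your proposal is correct and follows the paper's proof essentially step by step. The common steps are Duhamel's formula with the It\^o isometry, the incoherence bound $u_{j,i}^2\le\mu_j^2/n$ after dropping the centering term (your contraction argument is the paper's discarding of $-\frac1n\langle r_s,u_j\rangle^2$), Tonelli for $\|k^{*\ell}\|_{L^1}=\nu^\ell$, iteration with remainder $\nu^{N+1}\sup_{s\le t}\Lc_s$, and the resolvent identity $k+k*\Res=\Res$ for $V_t$; the only cosmetic difference is that you get $\sup_t\Lc_t\le\norm{r_0}^2/(1-\nu)$ by a direct supremum argument on the Volterra inequality rather than from \eqref{eq:resolvent-bound} and $\|\Res\|_{L^1}=\nu/(1-\nu)$, which is equally valid given the local boundedness of $\Lc$ you already invoke.
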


\begin{proof}
By Duhamel's formula for \eqref{eq:residual-sde},
\begin{equation}
\label{eq:duhamel}
\widetilde r_t=\sqrt{\gamma n}\int_0^t e^{-K(t-s)}K R_s\,\mathrm dW_s .
\end{equation}
Set $s_i^2(t):=\E\langle\widetilde r_t,u_i\rangle^2$. Since $Ke^{-2K(t-s)}K=\sum_{i\le m}\lambda_i^2e^{-2\lambda_i(t-s)}u_iu_i^\top$ and $R_sR_s^\top=\diag(r_s^2)-\frac1nr_sr_s^\top$, the It\^o isometry gives $V_t=\sum_{i\le m}s_i^2(t)$ with
\begin{equation}
\label{eq:coordinatewise}
s_i^2(t)=\gamma n\lambda_i^2\int_0^t e^{-2\lambda_i(t-s)}
\E\Bigl[\sum_{j=1}^n u_{i,j}^2\,(r_s)_j^2-\tfrac1n\langle r_s,u_i\rangle^2\Bigr]\mathrm ds .
\end{equation}
Bounding $u_{i,j}^2\le\mu_i^2/n$ and discarding the nonpositive term,
\begin{equation}
\label{eq:coordinatewise-integral}
s_i^2(t)\le\gamma\mu_i^2\lambda_i^2\int_0^t e^{-2\lambda_i(t-s)}\Lc_s\,\mathrm ds ,
\end{equation}
and summing over $i\le m$ gives $V_t\le(k*\Lc)(t)$, hence \eqref{eq:volterra} as $\Lc_t=\norm{\bar r_t}^2+V_t$. The mass \eqref{eq:kernel-mass} follows from $\int_0^\infty e^{-2\lambda_iu}\,\mathrm du=(2\lambda_i)^{-1}$ and \eqref{tr:incoh}.

Since $k\ge0$, Tonelli's theorem gives $\|k^{*\ell}\|_{L^1}=\nu^\ell$, so that for $\nu<1$ the series defining $\Res$ converges in $L^1$ and \eqref{eq:resolvent} holds. Iterating \eqref{eq:volterra} $\ell$ times,
\[
\Lc_t\le\norm{\bar r_t}^2+\sum_{j=1}^{\ell}\bigl(k^{*j}*\norm{\bar r_\cdot}^2\bigr)(t)+\bigl(k^{*(\ell+1)}*\Lc\bigr)(t),
\qquad
\bigl(k^{*(\ell+1)}*\Lc\bigr)(t)\le\nu^{\ell+1}\sup_{s\le t}\Lc_s ,
\]
and letting $\ell\to\infty$ gives \eqref{eq:resolvent-bound}, since $\sup_{s\le t}\Lc_s<\infty$. Inserting \eqref{eq:resolvent-bound} into $V_t\le(k*\Lc)(t)$ and using the resolvent identity $k+k*\Res=\Res$ gives the first bound in \eqref{eq:V-resolvent}; the second follows from \eqref{eq:resolvent-bound} and \eqref{eq:resolvent}, since $t\mapsto\norm{\bar r_t}=\norm{e^{-Kt}r_0}$ is nonincreasing ($K\succeq0$) and $\bar r_0=r_0$. Finally, \ref{ass:stepsize} is $\nu\le\frac18$, whence $\nu/(1-\nu)\le\frac17$.
\end{proof}

The next three results are proved in Section~\ref{sec:proofs}. First, we use the following integrated consequence of the Volterra bound.

\begin{lemma}[Integrated loss]
\label{lem:integrated-loss}
Under Assumptions~\ref{ass:spectrum} and \ref{ass:source}, if $\nu<1$, then
for every $T\ge0$,
\begin{equation}
\label{eq:integrated-loss}
\int_0^T \Lc_s\,\mathrm ds
\le
\frac{n}{1-\nu}
\left(
C_\lambda^{2r}\rho^2+2\delta^2T
\right).
\end{equation}
\end{lemma}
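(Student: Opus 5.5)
The plan is to integrate the resolvent bound \eqref{eq:resolvent-bound} of Proposition~\ref{prop:volterra} over $[0,T]$ and then control the integrated mean-flow loss $\int_0^T\norm{\bar r_s}^2\,\mathrm ds$ directly, using the splitting of $\bar r_t$ into its approximation and noise parts.

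\emph{Step 1 (from $\Lc$ to the mean flow).} Write $f(s):=\norm{\bar r_s}^2\ge0$. Since $\Res\ge0$, Tonelli's theorem gives
\[
\int_0^T(\Res*f)(t)\,\mathrm dt=\int_0^T f(s)\int_0^{T-s}\Res(u)\,\mathrm du\,\mathrm ds\le\|\Res\|_{L^1}\int_0^Tf(s)\,\mathrm ds .
\]
By \eqref{eq:resolvent} we have $\|\Res\|_{L^1}=\nu/(1-\nu)$. Integrating \eqref{eq:resolvent-bound} therefore yields
\[
\int_0^T\Lc_s\,\mathrm ds\le\frac1{1-\nu}\int_0^T\norm{\bar r_s}^2\,\mathrm ds .
\]

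\emph{Step 2 (splitting the mean flow).} Use $\norm{\bar r_s}^2\le2\norm{\bar r_s^{\mathrm{app}}}^2+2\norm{\bar r_s^{\mathrm{noise}}}^2$. By \eqref{eq:noise-res}, the noise part contributes at most $2n\delta^2T$.

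\emph{Step 3 (the approximation part).} This is the only point requiring care. The pointwise bound \eqref{eq:app-decay} is not integrable at $s=0$, so I would not use it. Instead I compute the integral exactly, using the identity from the proof of Proposition~\ref{prop:mean-flow}, namely $\norm{\bar r_s^{\mathrm{app}}}^2=n\,v^\top\Sigma^{2r+1}e^{-2\Sigma s}v$. Extending the integral to $[0,\infty)$ and integrating on each eigenspace (only $\lambda_j>0$ contribute) gives
\[
\int_0^\infty\norm{\bar r_s^{\mathrm{app}}}^2\,\mathrm ds=\frac n2\,v^\top\Sigma^{2r}v\le\frac n2\,\lambda_1^{2r}\rho^2\le\frac n2\,C_\lambda^{2r}\rho^2 ,
\]
where the last step is \ref{ass:spectrum} with $j=1$. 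After multiplication by $2$ this term contributes $nC_\lambda^{2r}\rho^2$.

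\emph{Conclusion.} Combining Steps 1--3 gives exactly $\frac{n}{1-\nu}\bigl(C_\lambda^{2r}\rho^2+2\delta^2T\bigr)$, which is \eqref{eq:integrated-loss}. I expect no real obstacle beyond Step 3, that is, replacing the non-integrable pointwise decay by the exact spectral integral.
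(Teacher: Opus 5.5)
Your proof is correct and follows the paper's argument essentially step for step. Like the paper, you integrate the resolvent bound with Tonelli to obtain the factor $1/(1-\nu)$, split $\bar r_s$ into its approximation and noise parts, and evaluate $\int_0^\infty\norm{\bar r_s^{\mathrm{app}}}^2\,\mathrm ds=\frac n2\,v^\top\Sigma^{2r}v=\frac n2\norm{\eta_0}^2\le\frac n2 C_\lambda^{2r}\rho^2$ exactly, rather than through the non-integrable pointwise decay \eqref{eq:app-decay}.
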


 We now turn \eqref{eq:V-resolvent} into a pointwise envelope; 

\begin{lemma}[Kernel and resolvent decay]
\label{lem:kernel-decay}
Under \ref{ass:spectrum} and \ref{ass:stepsize}, there
are constants \(c_k,c_{\Res}>0\), depending only on
\(\beta,C_\lambda\), such that for every \(u>0\),
\[
k(u)
\le
\gamma\mu^2c_k(1\vee u)^{-(2-\frac1\beta)},
\qquad
\Res(u)
\le
\gamma\mu^2c_{\Res}(1\vee u)^{-(2-\frac1\beta)}.
\]
\end{lemma}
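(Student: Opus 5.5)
The plan has two parts: first a pointwise bound on $k$ by splitting its sum over the spectrum, then a transfer of that decay to $\Res=\sum_{\ell\ge1}k^{*\ell}$ by a Banach-algebra argument for convolution with a polynomial weight.

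\emph{Kernel.} Bound $\mu_i^2\le\mu^2$, which gives $k(u)\le\gamma\mu^2 g(u)$ with $g(u):=\sum_{i\le m}\lambda_i^2e^{-2\lambda_iu}$. For $u\le1$ we have $g(u)\le\sum_i\lambda_i^2\le C_\lambda^2\zeta(2\beta)$ by \ref{ass:spectrum}. For $u\ge1$ we use that $\lambda\mapsto\lambda^2e^{-2\lambda u}$ is increasing on $[0,1/u]$ and bounded by $(eu)^{-2}$ everywhere.

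\begin{itemize}
\item For indices with $C_\lambda i^{-\beta}\le 1/u$, i.e.\ $i\ge i_u:=(C_\lambda u)^{1/\beta}$, monotonicity lets us replace $\lambda_i$ by its bound $C_\lambda i^{-\beta}$. This gives the terms $C_\lambda^2 i^{-2\beta}$, whose tail sum is $\lesssim i_u^{1-2\beta}\asymp u^{-(2-1/\beta)}$.
\item The remaining fewer than $i_u+1$ indices each contribute at most $(eu)^{-2}$. Their total is $\lesssim u^{1/\beta-2}$ as well.
\end{itemize}
Together these give $k(u)\le\gamma\mu^2c_k(1\vee u)^{-(2-1/\beta)}$, with $c_k$ depending only on $\beta$ and $C_\lambda$.

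\emph{Resolvent.} Let $w(u):=(1\vee u)^{\alpha}$ with $\alpha:=2-1/\beta>1$. The key is the submultiplicativity-type estimate
\[
\sup_t w(t)\,|f*g|(t)\;\le\;C_\alpha\bigl(\|f\|_{L^1}\,\|wg\|_\infty+\|g\|_{L^1}\,\|wf\|_\infty\bigr).
\]
To prove it, split the convolution integral at $s=t/2$ and use $w(t)\le 2^\alpha w(t-s)$ on $s\le t/2$ (and symmetrically on the other half). Write $a_\ell:=\|w\,k^{*\ell}\|_\infty$. Applying the estimate with $f=k$ and $g=k^{*(\ell-1)}$, and using $\|k^{*j}\|_{L^1}=\nu^j$, gives
\[
a_\ell\le C_\alpha\bigl(\nu a_{\ell-1}+\nu^{\ell-1}a_1\bigr),\qquad a_1\le\gamma\mu^2c_k .
\]
By induction $a_\ell\le \ell\,(C_\alpha\nu)^{\ell-1}C_\alpha a_1$ (roughly). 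This is summable only if $C_\alpha\nu<1$, and $C_\alpha=2^{\alpha}\le4$ may be too large for $\nu\le1/8$ as it stands.

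This smallness requirement is the main obstacle. The first fix to try is to sharpen the split: on each half, one factor is evaluated at an argument $\ge t/2$, where we use the weight, while the other is integrated, giving the $L^1$ mass $\nu$. So the constant is $2^\alpha\nu\le4\nu\le1/2$ under \ref{ass:stepsize}, and the series converges. Concretely, the induction becomes $a_\ell\le 2^\alpha(\nu a_{\ell-1}+\nu^{\ell-1}a_1)$, which yields $a_\ell\le \ell\,(2^\alpha\nu)^{\ell-1}a_1$, and summing gives $\sum_\ell a_\ell\le a_1/(1-2^\alpha\nu)^2\le 4a_1$. If the constant turned out worse, the fallback is to replace $k$ by $k\mathbf 1_{u>T_0}$ plus a part on $[0,T_0]$ with a large $T_0$, and use the resulting smallness of the tail mass.

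The conclusion is $\Res(u)\le\sum_\ell k^{*\ell}(u)\le\gamma\mu^2 c_{\Res}(1\vee u)^{-\alpha}$, with $c_{\Res}$ depending only on $\beta$ and $C_\lambda$, as claimed.
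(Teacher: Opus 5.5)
Your argument is correct. The kernel bound is the paper's own. The paper applies Lemma~\ref{lem:spectral-tails} with $p=2$, which splits the indices at $N=\lceil(C_\lambda u)^{1/\beta}\rceil$, bounds the tail terms by $C_\lambda^2 j^{-2\beta}$ and bounds each head term by $\sup_{x>0}x^2e^{-2xu}\le u^{-2}$, exactly as you do.

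For the resolvent you take a genuinely different route.

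\emph{The paper's route.} It uses that $k$ is nonincreasing, being a positive combination of decaying exponentials. It then uses the pigeonhole fact that on the simplex $s_1+\dots+s_\ell=u$ some $s_j\ge u/\ell$. This gives $k^{*\ell}(u)\le\ell\nu^{\ell-1}k(u/\ell)\le\ell^{1+a}\nu^{\ell-1}\gamma\mu^2c_k(1\vee u)^{-a}$ with $a=2-\frac1\beta$, and the polynomial loss $\ell^{1+a}$ is absorbed by $\nu^{\ell-1}$.

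\emph{Your route.} You prove the weighted-$L^\infty$ convolution inequality with constant $2^a$ and run the recursion $a_\ell\le2^a(\nu a_{\ell-1}+\nu^{\ell-1}a_1)$. This gives $a_\ell\le\ell(2^a\nu)^{\ell-1}a_1$, where the inductive step uses $2^a\ge1$. It yields $c_{\Res}=c_k(1-2^a\nu)^{-2}\le4c_k$.

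\emph{What each buys.} Your version needs no monotonicity of $k$, so it applies to any nonnegative kernel with this envelope. It also loses only a factor $\ell$ per convolution power. Its price is the smallness condition $2^a\nu<1$, i.e.\ $\nu<2^{-a}\in(\frac14,\frac12)$. The paper's argument closes for every $\nu<1$, with a $\nu$-dependent constant. This would matter if one tried to relax \ref{ass:stepsize} toward $\nu<\frac12$, as anticipated in Remark~\ref{rem:hp-cost}. Under $\nu\le\frac18$ both work.

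Finally, the obstacle you raise is not one. Already for your first version of the estimate, $2^a\nu<4\cdot\frac18=\frac12$. So your ``sharpened split'' is the same inequality, and the truncation fallback at $T_0$ is unnecessary.
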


By Lemma~\ref{lem:kernel-decay} and \eqref{eq:app-decay}--\eqref{eq:noise-res}, \eqref{eq:V-resolvent} is controlled by splitting the integral: for $s\le t/2$ the resolvent factor is small, for $s\ge t/2$ the mean residual is; only the latter depends on the smoothness of the solution, whence the compatibility condition \ref{ass:window}.
\begin{proposition}[Variance envelope]
\label{prop:variance-envelope}
Under Assumptions \ref{ass:spectrum}, \ref{ass:source} and \ref{ass:stepsize}, for every \(t\ge0\),
\begin{equation}
\label{eq:variance-split}
V_t
\le
\frac{\nu}{1-\nu}\,\|\bar r_{t/2}\|^2
+
c_{\Res}\gamma\mu^2
\bigl(1\vee\tfrac t2\bigr)^{-(2-\frac1\beta)}
\bigl(C_\lambda^{2r}n\rho^2+n\delta^2t\bigr).
\end{equation}
If, in addition, the compatibility condition \ref{ass:window} holds, then
there exists \(C_V=C_V(\beta,r,C_\lambda)\) such that
\begin{equation}
\label{eq:variance-envelope}
V_t
\le
C_V\gamma\mu^2
\Bigl(
n\rho^2(1\vee t)^{-(2r+1)}
+n\delta^2
\Bigr),
\qquad t\ge0.
\end{equation}
\end{proposition}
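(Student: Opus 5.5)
The plan is to start from the first bound in \eqref{eq:V-resolvent}, $V_t\le(\Res*\norm{\bar r_\cdot}^2)(t)=\int_0^t\Res(t-s)\norm{\bar r_s}^2\,\mathrm ds$, and split the integral at $s=t/2$. On $[t/2,t]$, monotonicity of $s\mapsto\norm{\bar r_s}=\norm{e^{-Ks}r_0}$ gives $\norm{\bar r_s}^2\le\norm{\bar r_{t/2}}^2$. The remaining factor $\int_{t/2}^t\Res(t-s)\,\mathrm ds$ is at most $\|\Res\|_{L^1}=\nu/(1-\nu)$ by \eqref{eq:resolvent}, which yields the first term of \eqref{eq:variance-split}. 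On $[0,t/2]$ we have $t-s\ge t/2$, so Lemma~\ref{lem:kernel-decay} bounds $\Res(t-s)\le c_\Res\gamma\mu^2(1\vee\tfrac t2)^{-(2-\frac1\beta)}$ uniformly. It then suffices to bound $\int_0^{t/2}\norm{\bar r_s}^2\,\mathrm ds$. Since $\norm{\bar r_s}^2\le\Lc_s$, Lemma~\ref{lem:integrated-loss} with $T=t/2$ gives $\frac{n}{1-\nu}(C_\lambda^{2r}\rho^2+\delta^2t)$, and the factor $1/(1-\nu)\le 8/7$ is absorbed into $c_\Res$. Alternatively, one can integrate $\norm{\bar r_s}^2\le 2\norm{\bar r^{\mathrm{app}}_s}^2+2n\delta^2$ directly, using $\norm{\bar r_s^{\mathrm{app}}}^2=n v^\top\Sigma^{2r+1}e^{-2\Sigma s}v$, whose integral over $s$ is at most $\tfrac n2\rho^2\lambda_1^{2r}\le \tfrac n2 C_\lambda^{2r}\rho^2$. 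This proves \eqref{eq:variance-split}.

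For \eqref{eq:variance-envelope}, I would bound each term by the target. For the first term, \eqref{eq:app-decay} and \eqref{eq:noise-res} give $\norm{\bar r_{t/2}}^2\le 2nc_r^2\rho^2(t/2)^{-(2r+1)}+2n\delta^2$. For $t\le1$ I would instead use $\norm{\bar r_{t/2}}\le\norm{r_0}\le\sqrt n(\lambda_1^{r+1/2}\rho+\delta)$, which is of the form $n\rho^2(1\vee t)^{-(2r+1)}+n\delta^2$. The prefactor $\nu/(1-\nu)$ must be converted into $\gamma\mu^2$. Since $\nu=\tfrac\gamma2\mu_\star^2\kdat\le\tfrac\gamma2\mu^2C_\lambda\zeta(\beta)$, we get $\nu/(1-\nu)\le\tfrac87\nu\lesssim\gamma\mu^2$ with a constant depending on $\beta,C_\lambda$.

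For the second term, the noise part is $c_\Res\gamma\mu^2 n\delta^2\,t(1\vee t/2)^{-(2-1/\beta)}$. Because $2-1/\beta>1$ when $\beta>1$, the map $t\mapsto t(1\vee t/2)^{-(2-1/\beta)}$ is bounded, so this part is $\lesssim\gamma\mu^2n\delta^2$. The signal part is $\gamma\mu^2C_\lambda^{2r}n\rho^2(1\vee t/2)^{-(2-1/\beta)}$, and it is here that \ref{ass:window} enters. The condition $2r<(\beta-1)/\beta$ is equivalent to $2r+1<2-1/\beta$, so $(1\vee t/2)^{-(2-1/\beta)}\le 2^{2}(1\vee t)^{-(2r+1)}$. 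Collecting the constants gives $C_V(\beta,r,C_\lambda)$.

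I expect the main obstacle to be the last comparison of decay exponents, which is where the compatibility condition is used. A secondary point is making sure that every prefactor, including $\nu/(1-\nu)$, is dominated by $\gamma\mu^2$ times a constant depending only on $\beta,r,C_\lambda$. This uses $\mu_\star^2\le\mu^2$ and $\kdat\le C_\lambda\zeta(\beta)$, both available from \eqref{tr:incoh} and \ref{ass:spectrum}.
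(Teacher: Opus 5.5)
Your proposal is correct and follows the paper's proof essentially step for step: the split of $\int_0^t\Res(t-s)\norm{\bar r_s}^2\,\mathrm ds$ at $t/2$, the $L^1$ bound $\nu/(1-\nu)$ on the near part, the pointwise resolvent decay on the far part, and the exponent comparison $2-\frac1\beta>2r+1$ from \ref{ass:window}. There is only a cosmetic difference for small $t$, where the paper bounds $V_t\le\frac{\nu}{1-\nu}\norm{r_0}^2$ directly for $t\le2$. Note also that your second option, direct integration of $\norm{\bar r_s}^2$, is the one the paper uses and yields \eqref{eq:variance-split} with exactly the constant $c_{\Res}$, whereas going through Lemma~\ref{lem:integrated-loss} inflates that constant by $1/(1-\nu)$.
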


\begin{remark}[Only an upper bound on the stopping time is needed]
\label{rem:early-stopping}
Up to the prefactor $C_V\gamma\mu^2$, the envelope \eqref{eq:variance-envelope} has the time dependence of the bound $\norm{\bar r_t}^2\lesssim n\rho^2(1\vee t)^{-(2r+1)}+n\delta^2$ for the mean residual: the fluctuation envelope is of the order of the mean flow envelope, as the identity $\sum_{i\le n}\norm{R_s^\top u_i}^2=\norm{R_s}_F^2=(1-\frac1n)\norm{r_s}^2$ for the total noise intensity indicates. Moreover, \ref{ass:source} provides no deterministic lower bound on $\tau$: $\tprior$ is a worst-case scale over the source ball, and individual instances may cross the threshold much earlier. For filter-based methods (Showalter, Landweber, Tikhonov) this is harmless, the error at the discrepancy stopping index being controlled by the residual through an interpolation inequality. The SGD iteration is not filter-based, and we show below that the error at $\tau$ can nevertheless be controlled from an upper bound on $\tau$ alone.
\end{remark}

\subsection{Stability: no late stopping}
\label{sec:thm1}

We first show that, with high probability, the data-driven stopping time $\tau$ does not exceed the optimal \emph{a priori} time by more than a logarithmic factor; compare \cite{jahn2020discrepancy}, where for SGD with polynomially decaying step size the excess is a polynomial factor of arbitrarily small order, asymptotically. All asymptotics are taken as
\[
\frac{\delta}{\rho}\longrightarrow0,
\qquad\text{equivalently}\qquad
\tprior
=
\left(\frac{\rho}{\delta}\right)^{\frac{2}{2r+1}}
\longrightarrow\infty.
\]
Throughout this section, we set
\[
\alpha
:=
\frac{\beta-1}{\beta}-2r>0,
\qquad
c_G
:=
\frac{\alpha}{2r+1}>0.
\]
The signal and noise scales \(\rho\) and \(\delta\) enter the failure
probability only through their ratio, whereas the weighted noise-feedback
strength $\nu$ is an independent parameter governing the contraction constants.

At a fixed time $t\asymp\tprior$, Proposition~\ref{prop:variance-envelope} gives only $V_t\lesssim\gamma\mu^2n\delta^2$, the order of the squared threshold: once $\norm{\bar r_t}\le c_0\sqrt n\,\delta$ for some $c_0<\kstop$, Chebyshev's inequality yields
\[
\Prob(\tau>t)
\le
\Prob\bigl(\norm{r_t-\bar r_t}>(\kstop-c_0)\sqrt n\,\delta\bigr)\le \frac{V_t}{(\kstop-c_0)^2 n\delta^2},
\]
a constant rather than a bound vanishing as $\delta/\rho\to0$. The remedy is a growing number of checkpoints and the contraction of the dynamics between them. We fix a block length equal to the \emph{a priori} time up to a constant:
\begin{equation}
\label{eq:t1}
t_b
:=
C_b\tprior,
\qquad
C_b
:=
\max\left\{
1,
\left(\frac{2c_r}{\kstop-1}\right)^{\frac{2}{2r+1}}
\right\}.
\end{equation}
\begin{lemma}[Base case]
\label{lem:base}
Under Assumption~\ref{ass:main} and $\delta\le\rho$,
\begin{equation}
\label{eq:base-case}
\norm{\bar r_{t_b}}\le\frac{\kstop+1}{2}\sqrt n\,\delta,
\qquad
\E\norm{r_{t_b}}^2\le n\delta^2\,(A_0+B_0\gamma\mu^2),
\qquad
A_0:=\frac{(\kstop+1)^2}{4},\quad B_0:=2C_V,
\end{equation}
with $C_V$ the constant of Proposition~\ref{prop:variance-envelope}.
\end{lemma}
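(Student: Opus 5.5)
The plan is to split the mean residual into its approximation and noise parts, $\bar r_{t_b}=\bar r^{\mathrm{app}}_{t_b}-\bar r^{\mathrm{noise}}_{t_b}$, and bound each with Proposition~\ref{prop:mean-flow}. By \eqref{eq:noise-res}, $\norm{\bar r^{\mathrm{noise}}_{t_b}}\le\sqrt n\,\delta$. By \eqref{eq:app-decay},
\[
\norm{\bar r^{\mathrm{app}}_{t_b}}\le\sqrt n\,\rho\,c_r\,t_b^{-(r+\frac12)}=\sqrt n\,c_r\,C_b^{-(r+\frac12)}\,\rho\,\tprior^{-(r+\frac12)} .
\]
From \eqref{eq:tprior}, $\rho\,\tprior^{-(r+\frac12)}=\rho\cdot\delta/\rho=\delta$. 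The choice \eqref{eq:t1} gives $C_b^{(2r+1)/2}\ge 2c_r/(\kstop-1)$, so the approximation part is at most $\frac{\kstop-1}{2}\sqrt n\,\delta$. The triangle inequality then yields the first claim,
\[
\norm{\bar r_{t_b}}\le\bigl(1+\tfrac{\kstop-1}{2}\bigr)\sqrt n\,\delta=\tfrac{\kstop+1}{2}\sqrt n\,\delta .
\]

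For the second moment I would use $\E\norm{r_{t_b}}^2=\norm{\bar r_{t_b}}^2+V_{t_b}$. The first term is at most $A_0 n\delta^2$ by the first claim. For $V_{t_b}$ I would apply the envelope \eqref{eq:variance-envelope} of Proposition~\ref{prop:variance-envelope}; its hypotheses \ref{ass:spectrum}, \ref{ass:source}, \ref{ass:window} and \ref{ass:stepsize} are all part of Assumption~\ref{ass:main}. This gives
\[
V_{t_b}\le C_V\gamma\mu^2\bigl(n\rho^2(1\vee t_b)^{-(2r+1)}+n\delta^2\bigr).
\]

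It remains to check that $\rho^2(1\vee t_b)^{-(2r+1)}\le\delta^2$. Since $\delta\le\rho$, we have $\tprior\ge1$, and $C_b\ge1$, so $t_b\ge\tprior\ge1$. Hence
\[
\rho^2 t_b^{-(2r+1)}\le\rho^2\tprior^{-(2r+1)}=\rho^2(\delta/\rho)^2=\delta^2 .
\]
Therefore $V_{t_b}\le 2C_V\gamma\mu^2 n\delta^2=B_0\gamma\mu^2n\delta^2$, and summing the two pieces gives the bound on $\E\norm{r_{t_b}}^2$ in \eqref{eq:base-case}.

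There is no real obstacle here: the argument is bookkeeping once the earlier propositions are available. The only point needing care is the role of $\delta\le\rho$, which guarantees $t_b\ge1$. That is what lets the factor $(1\vee t_b)$ in the envelope be replaced by $t_b$, and it also keeps the exponent arithmetic with $\tprior$ exact.
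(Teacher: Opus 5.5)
Your proof is correct and follows the paper's argument essentially line by line. You use the triangle inequality on the approximation/noise split via \eqref{eq:app-decay}--\eqref{eq:noise-res} together with the choice of $C_b$ in \eqref{eq:t1}, then write $\E\norm{r_{t_b}}^2=\norm{\bar r_{t_b}}^2+V_{t_b}$ and bound $V_{t_b}$ with the envelope \eqref{eq:variance-envelope}.

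One small correction to your closing remark: $(1\vee t_b)^{-(2r+1)}\le t_b^{-(2r+1)}$ holds for every $t_b>0$, so that step does not need $\delta\le\rho$. Invoking $t_b\ge\tprior\ge1$, as both you and the paper do, is harmless.
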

\begin{proof}
By \eqref{eq:app-decay}--\eqref{eq:noise-res}, $\norm{\bar r_{t_b}}\le\sqrt n\,(\rho c_rt_b^{-(r+\frac12)}+\delta)$, and $\rho c_rt_b^{-(r+\frac12)}=c_rC_b^{-(r+\frac12)}\delta\le\frac{\kstop-1}{2}\delta$ by \eqref{eq:tprior} and \eqref{eq:t1}. Since $t_b\ge\tprior\ge1$, \eqref{eq:variance-envelope} gives $V_{t_b}\le C_V\gamma\mu^2n(\rho^2t_b^{-(2r+1)}+\delta^2)\le2C_V\gamma\mu^2n\delta^2$, and $\E\norm{r_{t_b}}^2=\norm{\bar r_{t_b}}^2+V_{t_b}$.
\end{proof}

We place checkpoints at the arithmetic times
\[
a_k:=(k+1)t_b,
\qquad k\ge0,
\]
and resolve each block against the fixed spectral scale
\[
\lambda_*:=\frac1{t_b},
\qquad
\Hd:=\{i\le n:\lambda_i\ge\lambda_*\},
\qquad
\Tl:=\{i\le n:\lambda_i<\lambda_*\}.
\]
For \(a\ge0\), define the corresponding tail norm by
\[
\mathcal E_a^{\mathrm{tail}}
:=
\sum_{i\in\Tl}\langle r_a,u_i\rangle^2.
\]
The following three lemmas are proved in Section~\ref{sec:proofs}.
\begin{lemma}
\label{lem:contraction}
Set $q:=e^{-2}+\frac{\nu}{1-\nu}$, so that $q<1$ under \ref{ass:stepsize}. Under Assumption \ref{ass:main}, for every \(a\ge0\), almost surely,
\begin{equation}
\label{eq:one-step}
\E\left[
\norm{r_{a+t_b}}^2
\,\middle|\,
\mathcal F_a
\right]
\le
q\norm{r_a}^2+\mathcal E_a^{\mathrm{tail}} .
\end{equation}
\end{lemma}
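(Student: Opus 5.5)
We restart the dynamics at time $a$. By the strong Markov property (here $a$ is deterministic, so the plain Markov property suffices), conditionally on $\mathcal F_a$ the process $(r_{a+s})_{s\ge0}$ solves \eqref{eq:residual-sde} with initial value $r_a$. Everything in Section~\ref{sec:fluct} was derived for a deterministic initial residual, so it applies conditionally. We apply Proposition~\ref{prop:volterra}, specifically \eqref{eq:resolvent-bound}, to the restarted process with $r_0$ replaced by $r_a$. Writing $\bar r^{(a)}_s:=e^{-Ks}r_a$ for the conditional mean, this gives
\[
\E\bigl[\norm{r_{a+t_b}}^2\,\big|\,\mathcal F_a\bigr]\le\norm{e^{-Kt_b}r_a}^2+\bigl(\Res*\norm{\bar r^{(a)}_\cdot}^2\bigr)(t_b).
\]

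We bound the two terms separately.

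\emph{Mean term.} We split it along $\Hd$ and $\Tl$:
\[
\norm{e^{-Kt_b}r_a}^2=\sum_{i\in\Hd}e^{-2\lambda_it_b}\langle r_a,u_i\rangle^2+\sum_{i\in\Tl}e^{-2\lambda_it_b}\langle r_a,u_i\rangle^2 .
\]
On $\Hd$ we have $\lambda_it_b\ge1$, so each factor is at most $e^{-2}$. On $\Tl$ each factor is at most $1$. Hence
\[
\norm{e^{-Kt_b}r_a}^2\le e^{-2}\sum_{i\in\Hd}\langle r_a,u_i\rangle^2+\mathcal E_a^{\mathrm{tail}} .
\]

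\emph{Fluctuation term.} The map $s\mapsto\norm{e^{-Ks}r_a}$ is nonincreasing because $K\succeq0$. Therefore
\[
\bigl(\Res*\norm{\bar r^{(a)}_\cdot}^2\bigr)(t_b)\le\|\Res\|_{L^1}\norm{r_a}^2=\frac{\nu}{1-\nu}\norm{r_a}^2
\]
by \eqref{eq:resolvent}.

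\emph{Combining.} Adding the two bounds gives
\[
e^{-2}\sum_{\Hd}\langle r_a,u_i\rangle^2+\mathcal E_a^{\mathrm{tail}}+\tfrac{\nu}{1-\nu}\norm{r_a}^2\le q\norm{r_a}^2+\mathcal E_a^{\mathrm{tail}},
\]
using $\sum_{\Hd}\langle r_a,u_i\rangle^2\le\norm{r_a}^2$. This is \eqref{eq:one-step}. The frozen directions $i>m$ have $\lambda_i=0<\lambda_*$, so they lie in $\Tl$ and are accounted for in $\mathcal E_a^{\mathrm{tail}}$, consistent with Remark~\ref{rem:rank}. Finally, $q<1$ under \ref{ass:stepsize}, since $e^{-2}+\frac17<1$.

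The main obstacle is justifying the conditional use of Proposition~\ref{prop:volterra}. That proof uses the It\^o isometry, finiteness of $\sup_{s\le t}\Lc_s$, and the incoherence bound for a deterministic start. To handle this, we rerun the Duhamel argument \eqref{eq:duhamel} from time $a$ with conditional expectations $\E[\,\cdot\,|\mathcal F_a]$. Two facts make this legitimate. First, $(W_{a+s}-W_a)_{s\ge0}$ is a Brownian motion independent of $\mathcal F_a$. Second, $r_a$ is square integrable, so the conditional second moments are almost surely finite on compact intervals. The conditional versions of \eqref{eq:coordinatewise-integral} and of the Volterra iteration then hold almost surely. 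This gives the statement for every $a\ge0$ almost surely, as claimed.
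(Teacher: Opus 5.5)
Your proof is correct and follows essentially the same route as the paper. You restart the residual dynamics at time $a$, apply the Volterra machinery of Proposition~\ref{prop:volterra} conditionally on $\mathcal F_a$, bound the fluctuation contribution by $\frac{\nu}{1-\nu}\norm{r_a}^2$, and split $\norm{e^{-t_bK}r_a}^2$ along $\Hd$ and $\Tl$ to obtain $e^{-2}\norm{r_a}^2+\mathcal E_a^{\mathrm{tail}}$. The only cosmetic difference is that you use the resolvent form \eqref{eq:resolvent-bound} together with the monotonicity of $s\mapsto\norm{e^{-sK}r_a}$, whereas the paper bounds $(k*\Lc^{(a)})(t_b)\le\nu\sup_{s\le t_b}\Lc^{(a)}_s\le\frac{\nu}{1-\nu}\norm{r_a}^2$; both give the same constant.
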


We split the tail norm into a deterministic and a fluctuation part. Writing
\[
\langle r_a,u_i\rangle
=
\bar r_i(a)+\widetilde r_i(a),
\qquad
\bar r_i(a):=\langle\bar r_a,u_i\rangle,
\qquad
\widetilde r_i(a):=\langle\widetilde r_a,u_i\rangle,
\]
and using \((x+y)^2\le2x^2+2y^2\), we obtain
\begin{equation}
\label{eq:tail-split}
\mathcal E_a^{\mathrm{tail}}
\le
2\overline{\mathcal E}_a^{\mathrm{tail}}
+
2\widetilde{\mathcal E}_a^{\mathrm{tail}},
\text{ where } \, \, \overline{\mathcal E}_a^{\mathrm{tail}}
:=
\sum_{i\in\Tl}\bar r_i(a)^2,
\, \,
\widetilde{\mathcal E}_a^{\mathrm{tail}}
:=
\sum_{i\in\Tl}\widetilde r_i(a)^2.
\end{equation}

The deterministic part is bounded surely at the noise scale and is absorbed into the contraction on the event that stopping has not occurred; the fluctuation part is smaller by a power of $\delta/\rho$ and enters additively.
\begin{lemma}
\label{lem:tail-det}
Under Assumption \ref{ass:main},
 for every \(a\ge0\),
\begin{equation}
\label{eq:tail-det}
\overline{\mathcal E}_a^{\mathrm{tail}}
\le 4n\delta^2.
\end{equation}
\end{lemma}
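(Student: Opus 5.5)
The plan is to work directly with the spectral coordinates of the mean residual. By \eqref{eq:mean-flow}, $\bar r_i(a)=e^{-\lambda_i a}\bigl(\langle X\eta_0,u_i\rangle-\langle\eps,u_i\rangle\bigr)$. We split $\overline{\mathcal E}_a^{\mathrm{tail}}$ into an approximation sum and a noise sum using $(x+y)^2\le 2x^2+2y^2$:
\[
\overline{\mathcal E}_a^{\mathrm{tail}}\le 2\sum_{i\in\Tl}e^{-2\lambda_i a}\langle X\eta_0,u_i\rangle^2+2\sum_{i\in\Tl}e^{-2\lambda_i a}\langle\eps,u_i\rangle^2 .
\]

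\emph{Noise sum.} Since $e^{-2\lambda_i a}\le 1$, this sum is at most $\sum_{i\le n}\langle\eps,u_i\rangle^2=\norm\eps^2\le n\delta^2$. This is also where the frozen directions $i>m$ (Remark~\ref{rem:rank}) enter, because $\lambda_i=0<\lambda_*$ places them in $\Tl$. It therefore contributes at most $2n\delta^2$.

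\emph{Approximation sum.} Write $\langle X\eta_0,u_i\rangle$ via the source condition \ref{ass:source}. With $X v_i=\sqrt{n\lambda_i}u_i$, we get $\langle X\Sigma^r v,u_i\rangle=\sqrt{n}\,\lambda_i^{r+1/2}\langle v,v_i\rangle$ for $i\le m$, and the coordinate vanishes for $i>m$. On $\Tl$ we have $\lambda_i<\lambda_*=1/t_b$, so, dropping the exponential,
\[
\sum_{i\in\Tl}\langle X\eta_0,u_i\rangle^2\le n\lambda_*^{2r+1}\sum_i\langle v,v_i\rangle^2\le n\rho^2 t_b^{-(2r+1)} .
\]
By \eqref{eq:tprior} and $C_b\ge1$, $\rho^2t_b^{-(2r+1)}=C_b^{-(2r+1)}\rho^2\tprior^{-(2r+1)}=C_b^{-(2r+1)}\delta^2\le\delta^2$. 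So this part also contributes at most $2n\delta^2$, and adding the two gives $4n\delta^2$, uniformly in $a\ge0$.

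No step is a genuine obstacle. The only care needed is to keep the bound uniform in $a$, which is why we discard the factor $e^{-2\lambda_i a}\le1$ rather than using decay in $a$. We also need to check the intertwining identity that turns the source condition into the weight $\lambda_i^{r+1/2}$ on the tail. Note that neither \ref{ass:spectrum} nor \ref{ass:window} is actually needed; only \ref{ass:source}, the bound $\norm\eps\le\sqrt n\delta$ and $C_b\ge1$ are used.
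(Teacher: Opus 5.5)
Your proof is correct and is essentially the paper's argument. The paper likewise writes $\bar r_i(a)=e^{-\lambda_i a}\bigl(\sqrt n\,\lambda_i^{r+1/2}\langle v,v_i\rangle-\langle\eps,u_i\rangle\bigr)$, splits with $(x+y)^2\le2x^2+2y^2$, discards $e^{-2\lambda_ia}\le1$, and uses $\lambda_i^{2r+1}\le t_b^{-(2r+1)}$ on $\Tl$ with $C_b\ge1$ and $\norm{\eps}^2\le n\delta^2$, reaching the same bound $2n\delta^2+2n\delta^2$; the only cosmetic point is that your coordinate identity really rests on the adjoint relation $X^\top u_i=\sqrt{n\lambda_i}\,v_i$ (immediate from $u_i=(n\lambda_i)^{-1/2}Xv_i$) rather than on $Xv_i=\sqrt{n\lambda_i}\,u_i$ itself.
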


\begin{lemma}
\label{lem:tail-subcritical}
Under Assumption \ref{ass:main}, with $\alpha$ and $c_G$ as above, for every fixed \(C_{\mathrm{hor}}<\infty\), there exist a constant
\(C_G<\infty\), depending only on the
structural constants in Assumption~\ref{ass:main}, on \(C_b\), on
\(C_{\mathrm{hor}}\), and on \(\nu\), and \(\varepsilon_0(\beta,r,c_G)\in(0,1)\) such that, whenever
\(\delta/\rho\le\varepsilon_0\) and
\[
t_b
\le
a
\le
C_{\mathrm{hor}}\log(\rho/\delta)\,t_b,
\]
one has
\begin{equation}
\label{eq:tail-subcritical}
\E\left[
\widetilde{\mathcal E}_a^{\mathrm{tail}}
\right]
\le
C_G\gamma\mu^2
n\delta^2
\left(\frac{\delta}{\rho}\right)^{c_G}.
\end{equation}
\end{lemma}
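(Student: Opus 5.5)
The plan is to bound the expected fluctuation tail energy coordinatewise with the It\^o-isometry estimate \eqref{eq:coordinatewise-integral}, discard the exponential damping, and then control the two resulting factors separately. These factors are the total tail curvature $\sum_{i\in\Tl}\lambda_i^2$ and the integrated loss $\int_0^a\Lc_s\,\mathrm ds$ from Lemma~\ref{lem:integrated-loss}. The key point is that the tail directions are exactly those with $\lambda_i<1/t_b$. For these directions the factor $\lambda_i^2$ supplies a power of $t_b$ that beats the growth of the integrated loss, thanks to the compatibility condition \ref{ass:window}.

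\emph{Step 1 (coordinatewise bound).} Since $\widetilde{\mathcal E}_a^{\mathrm{tail}}=\sum_{i\in\Tl}\widetilde r_i(a)^2$, we have $\E[\widetilde{\mathcal E}_a^{\mathrm{tail}}]=\sum_{i\in\Tl}s_i^2(a)$. By \eqref{eq:coordinatewise-integral}, $\mu_i^2\le\mu^2$ and $e^{-2\lambda_i(a-s)}\le1$, this gives
\[
\E\bigl[\widetilde{\mathcal E}_a^{\mathrm{tail}}\bigr]\le\gamma\mu^2\Bigl(\sum_{i\in\Tl}\lambda_i^2\Bigr)\int_0^a\Lc_s\,\mathrm ds
\le\gamma\mu^2\Bigl(\sum_{i\in\Tl}\lambda_i^2\Bigr)\frac{n}{1-\nu}\bigl(C_\lambda^{2r}\rho^2+2\delta^2a\bigr),
\]
where the second inequality is Lemma~\ref{lem:integrated-loss}. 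Indices $i>m$ contribute nothing, since $\lambda_i=0$ there.

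\emph{Step 2 (tail curvature).} Each $i\in\Tl$ satisfies $\lambda_i\le\min\{t_b^{-1},C_\lambda i^{-\beta}\}$ by \ref{ass:spectrum}. Set $j_0:=(C_\lambda t_b)^{1/\beta}$ and split the sum at $j_0$: there are at most $j_0$ indices bounded by $t_b^{-2}$, and the rest are bounded by $\sum_{j>j_0}C_\lambda^2j^{-2\beta}\lesssim j_0^{1-2\beta}$. Both pieces are of order $t_b^{-(2-\frac1\beta)}$, so
\[
\sum_{i\in\Tl}\lambda_i^2\le c(\beta,C_\lambda)\,t_b^{-(2-\frac1\beta)}.
\]
This is the same computation that underlies Lemma~\ref{lem:kernel-decay}.

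\emph{Step 3 (exponent bookkeeping).} Insert $t_b=C_b\tprior$ and $\rho^2=\delta^2\tprior^{2r+1}$ from \eqref{eq:tprior}. The signal term becomes
\[
n\rho^2\,t_b^{-(2-\frac1\beta)}\lesssim n\delta^2\,\tprior^{\,2r+1-2+\frac1\beta}=n\delta^2\,\tprior^{-\alpha}=n\delta^2(\delta/\rho)^{2c_G}.
\]
For the noise term, use $a\le C_{\mathrm{hor}}\log(\rho/\delta)\,t_b$ to get
\[
n\delta^2a\,t_b^{-(2-\frac1\beta)}\lesssim n\delta^2\log(\rho/\delta)\,\tprior^{-(1-\frac1\beta)}=n\delta^2\log(\rho/\delta)\,(\delta/\rho)^{\frac{2(1-1/\beta)}{2r+1}}.
\]
Since $1-\frac1\beta>\alpha$, both exponents strictly exceed $c_G$, with margin at least $c_G$ in the first case. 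Hence both terms are bounded by $(\delta/\rho)^{c_G}$ once we absorb $\log(\rho/\delta)(\delta/\rho)^{c_G}\le1$, which holds for $\delta/\rho\le\varepsilon_0(\beta,r,c_G)$. Collecting the constants $c$, $C_b$, $C_\lambda^{2r}$, $C_{\mathrm{hor}}$ and $(1-\nu)^{-1}$ into $C_G$ gives \eqref{eq:tail-subcritical}. The lower bound $a\ge t_b$ is not even needed.

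The only delicate step is Step~2. There one must use the ordering of the eigenvalues, so that $\Tl$ is a terminal segment of indices, and handle the crossover at $j_0$ correctly. Everything else is a direct substitution. I expect no genuine obstacle beyond verifying that the logarithmic horizon costs only a power strictly smaller than the gain $(1-\frac1\beta)$, and this is precisely where \ref{ass:window} (that is, $\alpha>0$) enters.
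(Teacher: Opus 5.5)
Your proposal is correct and follows the paper's proof essentially step for step. The steps are: the It\^o-isometry/incoherence bound with the damping discarded, Lemma~\ref{lem:integrated-loss}, the tail sum $\sum_{i\in\Tl}\lambda_i^2\lesssim t_b^{-(2-\frac1\beta)}$, and the same exponent bookkeeping with the logarithm absorbed by the surplus power. The only cosmetic difference is that you re-derive the tail-curvature bound inline, where the paper cites Lemma~\ref{lem:spectral-tails} with $p=2$; that lemma, rather than Lemma~\ref{lem:kernel-decay}, is the source of the computation, and you are right that the lower bound $a\ge t_b$ is not used.
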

For $k\ge0$ set
\[
m_k:=\frac{\E\bigl[\mathbf 1_{\{\tau>a_k\}}\norm{r_{a_k}}^2\bigr]}{n\delta^2}.
\]
\begin{lemma}[Block recursion]
\label{lem:recursion}
Under Assumption~\ref{ass:main}, let $\kstop\ge\kappa_0:=(16/(1-q))^{1/2}$ and $q_\star:=q+8/\kstop^2$, so that $q_\star\le\frac{1+q}{2}<1$. For every $C_{\mathrm{hor}}<\infty$, with $C_G$ and $\varepsilon_0$ as in Lemma~\ref{lem:tail-subcritical}, if $\delta/\rho\le\varepsilon_0$ then $m_0\le A_0+B_0\gamma\mu^2$ and, for every $k\ge0$ with $a_k\le C_{\mathrm{hor}}\log(\rho/\delta)\,t_b$,
\begin{equation}
\label{eq:mk-recursion}
m_{k+1}\le q_\star m_k+2C_G\gamma\mu^2\Bigl(\frac{\delta}{\rho}\Bigr)^{c_G}.
\end{equation}
\end{lemma}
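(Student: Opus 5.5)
The claim $m_0\le A_0+B_0\gamma\mu^2$ is immediate from Lemma~\ref{lem:base}: since $a_0=t_b$ and $\mathbf 1_{\{\tau>a_0\}}\le1$, we get $m_0\le\E\norm{r_{t_b}}^2/(n\delta^2)\le A_0+B_0\gamma\mu^2$. For the recursion, I would condition on $\mathcal F_{a_k}$ and use that $\{\tau>a_{k+1}\}\subset\{\tau>a_k\}\in\mathcal F_{a_k}$ because $\tau$ is a stopping time. This gives
\[
\E\bigl[\mathbf 1_{\{\tau>a_{k+1}\}}\norm{r_{a_{k+1}}}^2\bigr]\le\E\Bigl[\mathbf 1_{\{\tau>a_k\}}\,\E\bigl[\norm{r_{a_k+t_b}}^2\,\big|\,\mathcal F_{a_k}\bigr]\Bigr]
\le\E\Bigl[\mathbf 1_{\{\tau>a_k\}}\bigl(q\norm{r_{a_k}}^2+\mathcal E^{\mathrm{tail}}_{a_k}\bigr)\Bigr],
\]
where the second step is Lemma~\ref{lem:contraction} applied with $a=a_k$.

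Next I would split the tail term via \eqref{eq:tail-split}. For the deterministic part, Lemma~\ref{lem:tail-det} gives $2\overline{\mathcal E}^{\mathrm{tail}}_{a_k}\le8n\delta^2$. On $\{\tau>a_k\}$ we have $\norm{r_{a_k}}>\kstop\sqrt n\,\delta$ by path continuity and the definition of $\tau$, so $n\delta^2<\norm{r_{a_k}}^2/\kstop^2$ there. Hence
\[
\mathbf 1_{\{\tau>a_k\}}\,2\overline{\mathcal E}^{\mathrm{tail}}_{a_k}\le\frac8{\kstop^2}\mathbf 1_{\{\tau>a_k\}}\norm{r_{a_k}}^2,
\]
and this absorbs into the contraction, producing the factor $q_\star=q+8/\kstop^2$. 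The fluctuation part I would bound by dropping the indicator and applying Lemma~\ref{lem:tail-subcritical}, which requires $t_b\le a_k\le C_{\mathrm{hor}}\log(\rho/\delta)t_b$. The lower bound holds since $a_k=(k+1)t_b$, and the upper bound is the hypothesis. This yields $\E[2\widetilde{\mathcal E}^{\mathrm{tail}}_{a_k}]\le2C_G\gamma\mu^2n\delta^2(\delta/\rho)^{c_G}$. Dividing by $n\delta^2$ gives \eqref{eq:mk-recursion}.

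Finally, $\kstop^2\ge16/(1-q)$ gives $8/\kstop^2\le(1-q)/2$, so $q_\star\le(1+q)/2<1$. The only delicate point is the measurability argument: the indicator must be $\mathcal F_{a_k}$-measurable before the conditional contraction is applied, and the inequality $\norm{r_{a_k}}>\kstop\sqrt n\delta$ must hold on $\{\tau>a_k\}$. Both follow from continuity of paths, so I expect no real obstacle beyond checking that the horizon condition of Lemma~\ref{lem:tail-subcritical} matches the hypothesis.
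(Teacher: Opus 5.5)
Your proposal is correct and follows the paper's proof step by step. The steps are the base case from Lemma~\ref{lem:base}, the tower property combined with Lemma~\ref{lem:contraction} on $\{\tau>a_k\}\in\mathcal F_{a_k}$, absorption of the deterministic tail via Lemma~\ref{lem:tail-det} and $\norm{r_{a_k}}^2>\kstop^2 n\delta^2$ on $\{\tau>a_k\}$, and Lemma~\ref{lem:tail-subcritical} for the fluctuation tail. One minor point: that strict inequality on $\{\tau>a_k\}$ follows from the definition of the infimum alone, and path continuity is needed only to make $\tau$ a stopping time.
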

\begin{proof}
The bound on $m_0$ is \eqref{eq:base-case}, as $a_0=t_b$. Since $\{\tau>a_{k+1}\}\subseteq\{\tau>a_k\}$, Lemma~\ref{lem:contraction} and the tower property give
\begin{equation}
\label{eq:mk-first}
m_{k+1}\le q\,m_k+\frac{\E\bigl[\mathbf 1_{\{\tau>a_k\}}\mathcal E_{a_k}^{\mathrm{tail}}\bigr]}{n\delta^2}.
\end{equation}
By \eqref{eq:tail-split}, Lemma~\ref{lem:tail-det} and $\norm{r_{a_k}}^2>\kstop^2n\delta^2$ on $\{\tau>a_k\}$,
\begin{equation}
\label{eq:tail-det-absorb}
2\E\bigl[\mathbf 1_{\{\tau>a_k\}}\overline{\mathcal E}_{a_k}^{\mathrm{tail}}\bigr]
\le 8n\delta^2\,\Prob(\tau>a_k)
\le\frac8{\kstop^2}\E\bigl[\mathbf 1_{\{\tau>a_k\}}\norm{r_{a_k}}^2\bigr]
=\frac8{\kstop^2}n\delta^2m_k,
\end{equation}
while Lemma~\ref{lem:tail-subcritical}, applicable since $t_b\le a_k\le C_{\mathrm{hor}}\log(\rho/\delta)t_b$, gives
\begin{equation}
\label{eq:tail-fluct-iteration}
2\E\bigl[\mathbf 1_{\{\tau>a_k\}}\widetilde{\mathcal E}_{a_k}^{\mathrm{tail}}\bigr]
\le2C_G\gamma\mu^2n\delta^2\Bigl(\frac{\delta}{\rho}\Bigr)^{c_G}.
\end{equation}
Combining \eqref{eq:mk-first}--\eqref{eq:tail-fluct-iteration} proves \eqref{eq:mk-recursion}; finally, $\kstop^2\ge16/(1-q)$ gives $8/\kstop^2\le(1-q)/2$, that is, $q_\star\le\frac{1+q}2$.
\end{proof}

We can now prove that the discrepancy principle does not stop late with high probability.
\begin{theorem}
\label{thm:no-late-hp}
Under Assumption \ref{ass:main}, suppose the safety factor satisfies
\[
\kstop
\ge
\kappa_0
:=
\left(
\frac{16}{1-q}
\right)^{1/2},
\]
and set
\begin{equation*}
     N_c
:=
\left\lceil
\frac{
-c_G\log(\rho/\delta)
}{
\log(q+\frac8{\kstop^2})
}
\right\rceil,
\, \,
\bar T
:=
(N_c+1)t_b.
\end{equation*}

There exist constants \(A,B<\infty\) depending only on $r,\beta,C_\lambda,\kdat,\kstop,\nu$,
and
\(\varepsilon_0(\beta,r,c_G)\in(0,1)\),
such that, whenever \(\delta/\rho\le\varepsilon_0\),
\begin{equation}
\label{eq:no-late-hp}
\Prob(\tau>\bar T)
\le
\left(
A+B\gamma\mu^2
\right)
\left(\frac{\delta}{\rho}\right)^{c_G}.
\end{equation}
\end{theorem}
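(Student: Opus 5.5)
The plan is to iterate the block recursion of Lemma~\ref{lem:recursion} up to $k=N_c$ and then apply Markov's inequality at the last checkpoint. Note $\bar T=(N_c+1)t_b=a_{N_c}$. Since $\{\tau>a_{N_c}\}\subseteq\{\norm{r_{a_{N_c}}}>\kstop\sqrt n\,\delta\}$, we have
\[
\Prob(\tau>\bar T)=\Prob(\tau>a_{N_c})\le\frac{\E\bigl[\mathbf 1_{\{\tau>a_{N_c}\}}\norm{r_{a_{N_c}}}^2\bigr]}{\kstop^2n\delta^2}=\frac{m_{N_c}}{\kstop^2}.
\]
It therefore suffices to bound $m_{N_c}$ by a multiple of $(\delta/\rho)^{c_G}$.

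\textbf{Step 1: fix the horizon.} The recursion \eqref{eq:mk-recursion} may only be used for $a_k\le C_{\mathrm{hor}}\log(\rho/\delta)\,t_b$, so $C_{\mathrm{hor}}$ must be chosen first. We need the indices $k=0,\dots,N_c-1$, i.e.\ $a_k\le N_ct_b$. Since $N_c\le 1+c_G\log(\rho/\delta)/|\log q_\star|$, and $\log(\rho/\delta)\ge\log(1/\varepsilon_0)$ is bounded below, we may take
\[
C_{\mathrm{hor}}:=\frac{c_G}{|\log q_\star|}+\frac{1}{\log(1/\varepsilon_0)}.
\]
This is admissible provided $\varepsilon_0$ is chosen before $C_{\mathrm{hor}}$, or simply $\varepsilon_0\le e^{-1}$, so that the second term is at most $1$. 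The constant then depends only on $q,\kstop,\beta,r$, which in turn fixes $C_G$ through Lemma~\ref{lem:tail-subcritical}. I expect this to be the only delicate point: one must check that $\varepsilon_0$ in Lemma~\ref{lem:tail-subcritical} depends only on $(\beta,r,c_G)$ and not on $C_{\mathrm{hor}}$. The statement says exactly this, so there is no circularity, and we shrink $\varepsilon_0$ to $\le e^{-1}$ if needed.

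\textbf{Step 2: unroll the recursion.} With $m_0\le A_0+B_0\gamma\mu^2$, iterating gives
\[
m_{N_c}\le q_\star^{N_c}(A_0+B_0\gamma\mu^2)+\frac{2C_G\gamma\mu^2}{1-q_\star}\Bigl(\frac{\delta}{\rho}\Bigr)^{c_G}.
\]
By the definition of $N_c$ (note $\log q_\star<0$), $N_c\log q_\star\le -c_G\log(\rho/\delta)$, hence $q_\star^{N_c}\le(\delta/\rho)^{c_G}$.

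\textbf{Step 3: conclude.} Dividing by $\kstop^2$ and using $1-q_\star\ge(1-q)/2$ gives \eqref{eq:no-late-hp} with
\[
A=\frac{A_0}{\kstop^2},\qquad B=\frac{B_0+4C_G/(1-q)}{\kstop^2}.
\]
Here $q$ depends on $\nu$, $A_0$ on $\kstop$, $B_0=2C_V$ on $(\beta,r,C_\lambda)$, and $C_G$ on the structural constants, $C_b$ (a function of $r,\kstop$), $C_{\mathrm{hor}}$ and $\nu$. So $A$ and $B$ depend only on the listed parameters. The condition $\delta\le\rho$ required by Lemma~\ref{lem:base} holds since $\varepsilon_0<1$.
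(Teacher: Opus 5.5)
Your proposal is correct and follows the paper's proof essentially step for step. You fix $C_{\mathrm{hor}}$ as $c_G/|\log q_\star|$ plus a multiple of $1/\log(1/\varepsilon_0)$; your coefficient $1$ suffices because the recursion is only invoked for $k<N_c$, where the paper uses $2$. You then unroll \eqref{eq:mk-recursion} with $q_\star^{N_c}\le(\delta/\rho)^{c_G}$ and apply Markov at $a_{N_c}=\bar T$, and your $B$ differs from the paper's $\bigl(B_0+2C_G/(1-q_\star)\bigr)/\kstop^2$ only through the harmless bound $1-q_\star\ge(1-q)/2$.
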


\begin{proof}
Let $C_{\mathrm{hor}}:=c_G/|\log q_\star|+2/\log(1/\varepsilon_0)$, with $\varepsilon_0$ from Lemma~\ref{lem:tail-subcritical}. For $\delta/\rho\le\varepsilon_0$, $x\le\lceil x\rceil<x+1$ gives $N_c+1\le c_G\log(\rho/\delta)/|\log q_\star|+2\le C_{\mathrm{hor}}\log(\rho/\delta)$, so that $a_k\le\bar T\le C_{\mathrm{hor}}\log(\rho/\delta)\,t_b$ for all $k\le N_c$ and Lemma~\ref{lem:recursion} applies for $k<N_c$. Iterating \eqref{eq:mk-recursion},
\begin{equation}
\label{eq:mk-final}
m_{N_c}\le q_\star^{N_c}\bigl(A_0+B_0\gamma\mu^2\bigr)+\frac{2C_G\gamma\mu^2}{1-q_\star}\Bigl(\frac{\delta}{\rho}\Bigr)^{c_G},
\qquad
q_\star^{N_c}\le\exp\bigl(-c_G\log(\rho/\delta)\bigr)=\Bigl(\frac{\delta}{\rho}\Bigr)^{c_G},
\end{equation}
the second bound by the definition of $N_c$. Since $\{\tau>\bar T\}=\{\tau>a_{N_c}\}\subseteq\{\norm{r_{a_{N_c}}}^2>\kstop^2n\delta^2\}$, Markov's inequality gives $\Prob(\tau>\bar T)\le m_{N_c}/\kstop^2$, which is \eqref{eq:no-late-hp} with $A:=A_0/\kstop^2$ and $B:=\bigl(B_0+2C_G/(1-q_\star)\bigr)/\kstop^2$.
\end{proof}

\begin{remark}
\label{rem:hp-cost}
The constant $16$ in $\kappa_0$ comes from imposing $q_\star\le\frac{1+q}2$ rather than $q_\star<1$, and from two applications of $(x+y)^2\le2x^2+2y^2$, in \eqref{eq:tail-split} and in the proof of Lemma~\ref{lem:tail-det}, which a weighted Young inequality would sharpen; the refinement stops at $\kstop>(1-q)^{-1/2}$, since the tail retains the frozen contribution $\sum_{i\in\Tl}\langle\eps,u_i\rangle^2$, which can be of order $n\delta^2$. Resolving each block against $\lambda_*=L_0/t_b$, at the price of a block length $t_b\asymp L_0\tprior$, replaces $e^{-2}$ by $e^{-2L_0}$ in $q$. We thus expect $\kappa_0$ to improve to $\sqrt{(1-\nu)/(1-2\nu)}$ and $\nu\le\frac18$ to relax to $\nu<\frac12$; we do not pursue this, as it affects neither the rate nor the exponent $c_G$. For most deterministic filter-based methods under white noise, the discrepancy principle must be stopped at $\kstop=1$ to be optimal \cite{blanchard2018optimal}; whether this holds in our setting is open.
\end{remark}

\subsection{Optimality: Minimax up to a Logarithm}
\label{sec:thm2}

We now show that the error at $\tau$ attains the minimax rate $M$ of \eqref{eq:tprior} up to a logarithmic factor. The proof splits the error into a \emph{pathwise} term, controlled by the discrepancy value and the source condition, and a \emph{fluctuation} term, whose expectation remains at the minimax scale, without the logarithmic loss, thanks to the spectral decay $\beta>1$. Fix the high-probability horizon of
Theorem~\ref{thm:no-late-hp} and set
\[
L:=\frac{\bar T}{\tprior},
\qquad
\lambda_{**}:=\frac{1}{\bar T},
\qquad
\bar \Hd:=\{i\le m:\lambda_i\ge\lambda_{**}\},
\qquad
\bar \Tl:=\{i\le m:\lambda_i<\lambda_{**}\}.
\]
Since $\bar T=(N_c+1)t_b$ and $t_b=C_b\tprior$, we have $L=(N_c+1)C_b$, and the definition of $N_c$ in Theorem~\ref{thm:no-late-hp}, with $x\le\lceil x\rceil<x+1$, gives
\begin{equation}
\label{eq:L-log}
1\vee c_L\log(\rho/\delta)\;\le\; L\;\le\; C_L\log(\rho/\delta),
\qquad
c_L:=\frac{C_bc_G}{|\log q_\star|},
\qquad
C_L:=C_b\Bigl(\frac{c_G}{|\log q_\star|}+\frac{2}{\log(1/\varepsilon_0)}\Bigr),
\end{equation}
for every $\delta/\rho\le\varepsilon_0$, the upper bound using $2\le2\log(\rho/\delta)/\log(1/\varepsilon_0)$; in particular $L\asymp\log(\rho/\delta)$.


\begin{theorem}[Error at the stopping time]
\label{thm:error}
Under Assumption \ref{ass:main}, let
$\kstop\;\ge\;\kappa_0$.
There exist constants $C',C_0,C_1>0$ and
$\varepsilon_0\in(0,1)$ such that, whenever
$\delta/\rho\le\varepsilon_0$,
\[
\Prob\left(
\norm{\eta_\tau}
\le
C'\sqrt{\log(\rho/\delta)}\,M
\right)
\ge
1
-
C_0(1+\gamma\mu^2)
\left(\frac{\delta}{\rho}\right)^{c_G}
-
\frac{C_1\gamma\mu^2}{\log(\rho/\delta)}.
\]
The constants $C',C_0,C_1$ depend only on
$r,\beta,C_\lambda,\kdat,\kstop,\nu$, and $\varepsilon_0$ only on $r,\beta,\kstop,\nu$.
\end{theorem}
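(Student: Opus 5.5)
We work on the event $\Omega_0:=\{\tau\le\bar T\}$, whose complement has probability at most $(A+B\gamma\mu^2)(\delta/\rho)^{c_G}$ by Theorem~\ref{thm:no-late-hp}; this produces the $C_0$ term. On $\Omega_0$ we split the error spectrally against the scale $\lambda_{**}=1/\bar T$. Since $\eta_\tau\in\operatorname{span}(v_j)_{j\le m}$ by \ref{ass:source}, we write $\norm{\eta_\tau}^2=\sum_{j\in\bar\Hd}\langle\eta_\tau,v_j\rangle^2+\sum_{j\in\bar\Tl}\langle\eta_\tau,v_j\rangle^2$.

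The head term is \emph{pathwise}. For $j\in\bar\Hd$ we have $\langle r_\tau,u_j\rangle=\sqrt{n\lambda_j}\langle\eta_\tau,v_j\rangle-\langle\eps,u_j\rangle$, so
\[
\sum_{j\in\bar\Hd}\langle\eta_\tau,v_j\rangle^2\le\frac{2}{n\lambda_{**}}\bigl(\norm{r_\tau}^2+\norm\eps^2\bigr)\le 2(\kstop^2+1)\,\delta^2\bar T,
\]
using $\norm{r_\tau}\le\kstop\sqrt n\delta$ by path continuity (if $\tau=0$, use $\norm{r_0}$). Since $\delta^2\bar T=L\delta^2\tprior=LM^2$ and $L\lesssim\log(\rho/\delta)$ by \eqref{eq:L-log}, this gives $C\log(\rho/\delta)M^2$ surely on $\Omega_0$. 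This is the source of the logarithm.

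The tail term is the main obstacle, because $\tau$ is random and we have no lower bound on it (Remark~\ref{rem:early-stopping}). The plan is to write $\langle\eta_\tau,v_j\rangle=\langle\bar\eta_\tau,v_j\rangle+\langle\tilde\eta_\tau,v_j\rangle$.

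For the deterministic part at a random time, we use the explicit mode formula from \eqref{eq:mean-eta}:
\[
\langle\bar\eta_t,v_j\rangle=e^{-\lambda_jt}\lambda_j^r\langle v,v_j\rangle+\phi_t(\lambda_j)(n\lambda_j)^{-1/2}\langle\eps,u_j\rangle.
\]
The first piece is bounded by $\lambda_j^r|\langle v,v_j\rangle|$ uniformly in $t$, and on $\bar\Tl$ we have $\lambda_j^{2r}<\bar T^{-2r}\le\tprior^{-2r}$, giving at most $\rho^2\tprior^{-2r}=M^2$. For the second piece, on $\bar\Tl$ we have $\lambda_j\tau\le\lambda_j\bar T<1$, so $\phi_\tau(\lambda_j)\le\tau\le\bar T$ and $\lambda_j\phi_\tau^2\le\bar T$. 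Its contribution is therefore at most $\bar T\norm\eps^2/n\le LM^2$, again logarithmic. Hence the mean part is bounded surely on $\Omega_0$ for any $\tau\le\bar T$, which avoids any need for a lower bound on $\tau$.

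For the fluctuation part we control $\sup_{t\le\bar T}\sum_{j\in\bar\Tl}\langle\tilde\eta_t,v_j\rangle^2$ in expectation. The tail coordinates of $\tilde\eta$ are continuous martingale-like processes. Multiplying by $e^{\lambda_jt}$, with $e^{\lambda_j\bar T}\le e$ on $\bar\Tl$, they become genuine martingales, $e^{\lambda_jt}\langle\tilde\eta_t,v_j\rangle=\sqrt{\gamma/n}\int_0^te^{\lambda_js}v_j^\top X^\top R_s\,\mathrm dW_s$.

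By Doob's inequality and the Itô isometry with the incoherence bound of \eqref{eq:coordinatewise-integral},
\[
\E\sup_{t\le\bar T}\sum_{j\in\bar\Tl}\langle\tilde\eta_t,v_j\rangle^2\lesssim\gamma\sum_{j\in\bar\Tl}\mu_j^2\lambda_j\int_0^{\bar T}\Lc_s\,\mathrm ds/n\le\gamma\mu^2\Bigl(\sum_{j\in\bar\Tl}\lambda_j\Bigr)\frac{C_\lambda^{2r}\rho^2+2\delta^2\bar T}{1-\nu},
\]
by Lemma~\ref{lem:integrated-loss}. By \ref{ass:spectrum}, $\sum_{\lambda_j<1/\bar T}\lambda_j\lesssim\bar T^{-(\beta-1)/\beta}$. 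The $\delta^2\bar T$ piece then gives $\delta^2\bar T^{1/\beta}\le\delta^2\tprior L$, and the $\rho^2$ piece gives $\rho^2\bar T^{-(\beta-1)/\beta}\le\rho^2\tprior^{-2r}=M^2$ by \ref{ass:window}, since $(\beta-1)/\beta>2r$.

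If the expectation comes out as $\lesssim\gamma\mu^2 LM^2$, Markov's inequality at threshold $C'^2\log(\rho/\delta)M^2$ gives failure probability $\lesssim\gamma\mu^2/\log(\rho/\delta)$ only after sharpening. I would therefore refine the split, bounding $\bar T^{1/\beta}$ by $\tprior^{1/\beta}L^{1/\beta}$ and noting that $\tprior^{1/\beta}\delta^2\le\delta^2\tprior$. This yields an expectation $\lesssim\gamma\mu^2M^2$ (the "without logarithmic loss" claimed in the text). Markov at level $\log(\rho/\delta)M^2$ then produces exactly the $C_1\gamma\mu^2/\log(\rho/\delta)$ term. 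Collecting the three bounds on $\Omega_0$ minus this Markov event concludes the proof.
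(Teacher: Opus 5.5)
Your proposal follows the paper's proof essentially step for step: the event $\{\tau\le\bar T\}$, the head/tail split at $\lambda_{**}=1/\bar T$, the pathwise head bound $2(\kstop^2+1)LM^2$, the bounds $M^2$ and $LM^2$ for the mean part on $\bar\Tl$ (valid surely on that event), the Doob--It\^o--incoherence estimate combined with Lemma~\ref{lem:integrated-loss} and $\sum_{\bar\Tl}\lambda_j\lesssim\bar T^{-(\beta-1)/\beta}$, and a Markov step at level $\asymp\log(\rho/\delta)M^2$ with a union bound against Theorem~\ref{thm:no-late-hp}. There is a typo in your mode formula for the propagated noise. The $v_j$-coordinate of $\phi_t(\Sigma)\frac1nX^\top\eps$ is $(1-e^{-\lambda_jt})(n\lambda_j)^{-1/2}\langle\eps,u_j\rangle$, which is off from yours by a factor $\lambda_j$. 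The bound $\lambda_j\phi_\tau^2\le\bar T$ that you then apply is exactly what the correct formula needs, so your conclusion $\le\delta^2\bar T$ stands.

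The step that fails as written is the sharpening of the fluctuation bound. Bounding $\tprior^{1/\beta}\delta^2\le\delta^2\tprior$ discards the polynomial surplus you need. It leaves $\delta^2\bar T^{1/\beta}\le L^{1/\beta}M^2$, so your expectation is $\lesssim\gamma\mu^2L^{1/\beta}M^2$, not $\gamma\mu^2M^2$. Markov at level $\log(\rho/\delta)M^2$ then only gives failure probability of order $\gamma\mu^2(\log(\rho/\delta))^{-(\beta-1)/\beta}$, which is weaker than the claimed $C_1\gamma\mu^2/\log(\rho/\delta)$.

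The fix, which is what the paper does, is to keep the surplus. Write $\delta^2\bar T^{1/\beta}=M^2L^{1/\beta}\tprior^{-(\beta-1)/\beta}$, where $\tprior^{-(\beta-1)/\beta}=(\delta/\rho)^{2(\beta-1)/(\beta(2r+1))}$. Since $L\le C_L\log(\rho/\delta)$ by \eqref{eq:L-log}, the factor $L^{1/\beta}\tprior^{-(\beta-1)/\beta}$ is at most $1$ once $\delta/\rho\le\varepsilon_0$, with $\varepsilon_0$ shrunk depending on $\beta,r,C_L$. The expectation is then $\lesssim\gamma\mu^2M^2$, and the rest of your argument goes through unchanged.
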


\begin{proof}
Work throughout on the event $\mathcal A:=\{\tau\le\bar T\}$. We write
$r_i(t):=\langle r_t,u_i\rangle$, $\eta_i(t):=\langle\eta_t,v_i\rangle$,
$\eps_i:=\langle\eps,u_i\rangle$ and $w_i:=\langle v,v_i\rangle$, so that
$\sum_iw_i^2\le\rho^2$. Since $X^\top u_i=\sqrt{n\lambda_i}\,v_i$ we have
$r_i(t)=\sqrt{n\lambda_i}\,\eta_i(t)-\eps_i$, and decomposing
$\eta_i(\tau)=\bar\eta_i(\tau)+\widetilde\eta_i(\tau)$ on the tail we obtain
\begin{align}
\norm{\eta_\tau}^2
&=
\sum_{i\in \bar \Hd}
\frac{(r_i(\tau)+\eps_i)^2}{n\lambda_i}
+
\sum_{i\in \bar \Tl}
\bigl(\bar\eta_i(\tau)+\tilde\eta_i(\tau)\bigr)^2
\notag\\
&\le
\underbrace{
\sum_{i\in \bar \Hd}
\frac{(r_i(\tau)+\eps_i)^2}{n\lambda_i}
+
2\sum_{i\in \bar \Tl}\bar\eta_i(\tau)^2
}_{=:P_\tau}
+
\underbrace{
2\sum_{i\in \bar \Tl}\tilde\eta_i(\tau)^2
}_{=:F_\tau}.
\label{eq:error-decomposition}
\end{align}

We first control $P_\tau$ pathwise. On the head,
$\lambda_i^{-1}\le\bar T$, and $\norm{r_\tau}\le\kstop\sqrt n\,\delta$ on $\mathcal A$ by \eqref{eq:dp-cont} and the continuity of $r$, so that
\begin{equation}
\sum_{i\in \bar \Hd}
\frac{(r_i(\tau)+\eps_i)^2}{n\lambda_i}\le
\frac{2\bar T}{n}
\left(
\norm{r_\tau}^2+\norm{\eps}^2
\right)
\le
2(\kstop^2+1)\delta^2\bar T
=
2(\kstop^2+1)L\,M^2.
\label{eq:head-pathwise}
\end{equation}
 For the tail, \eqref{eq:mean-eta} gives
\begin{equation}
\label{eq:tail-mean-split}
\sum_{i\in \bar \Tl} \bar\eta_i(\tau)^2
\le
2 \sum_{i\in \bar \Tl} e^{-2\lambda_i\tau}\lambda_i^{2r}w_i^2
+
2 \sum_{i\in \bar \Tl}\frac{(1-e^{-\lambda_i\tau})^2}{n\lambda_i}\eps_i^2.
\end{equation}

For the first term, since $\lambda_i<\bar T^{-1}$ on $ \bar \Tl$,
\begin{equation}
\sum_{i\in \bar \Tl}
e^{-2\lambda_i\tau}\lambda_i^{2r}w_i^2 \le
\sum_{i\in \bar \Tl}\lambda_i^{2r}w_i^2 \le
\rho^2\bar T^{-2r}
=
L^{-2r}M^2
\le
M^2.
\label{eq:tail-signal}
\end{equation}

For the second term, we use the elementary inequality $(1-e^{-x})^2\le x$ for $x\ge0$ to obtain
\begin{equation}
\label{eq:tail-mean-noise}
\sum_{i\in \bar \Tl}
\frac{(1-e^{-\lambda_i\tau})^2}{n\lambda_i}\eps_i^2
\le
\frac{\bar T}{n}\norm{\eps}^2
\le
\delta^2\bar T
=
L\,M^2.
\end{equation}
Combining
\eqref{eq:head-pathwise}--\eqref{eq:tail-mean-noise} proves
\begin{equation}
\label{eq:pathwise-error}
P_\tau
\le
C_2(1+\kstop^2)L\,M^2
\qquad\text{on }\{\tau\le\bar T\}.
\end{equation}

We now control the fluctuation part. By $\langle\tilde r_t,u_i\rangle=\sqrt{n\lambda_i}\,\langle\tilde\eta_t,v_i\rangle$ and Duhamel's formula \eqref{eq:duhamel}, we have
\begin{equation*}
\tilde\eta_i(t)=\frac{\tilde r_i(t)}{\sqrt{n\lambda_i}}=
e^{-\lambda_it}
\sqrt{\frac{\gamma}{\lambda_i}}
\int_0^t
e^{\lambda_is}\lambda_i
\bigl(u_i^\top R_s\,\mathrm dW_s\bigr)
\end{equation*}

By Doob's $L^2$ inequality and the It\^o isometry,
\begin{equation}
\E\left[
\sup_{t\le\bar T}\tilde \eta_i(t)^2
\right]
\le
4 \gamma
\int_0^{\bar T}
e^{2\lambda_is} \lambda_i
u_i^\top\E[R_sR_s^\top]u_i\,\mathrm ds.
\label{eq:doob-tail}
\end{equation}
For $i\in\bar{\mathcal T}$ and $s\le\bar T$, one has
$\lambda_is<1$, and therefore $e^{2\lambda_is}\le e^2$. Moreover, the
incoherence estimate gives
\[
u_i^\top\E[R_sR_s^\top]u_i
\le
\frac{\mu_i^2}{n}\Lc_s
\le
\frac{\mu^2}{n}\Lc_s.
\]
Substituting this into \eqref{eq:doob-tail} yields
\begin{equation}
\label{eq:tail-coordinate-fluctuation}
\E\left[
\sup_{t\le\bar T}\tilde \eta_i(t)^2
\right]
\le
4 e^2\gamma \frac{\lambda_i\mu^2}{n}
\int_0^{\bar T} \Lc_s \mathrm ds.
\end{equation}

Since $\tau\le\bar T$ on $\mathcal A$,
\begin{equation}
\E\left[
F_\tau\mathbf 1_{\mathcal A}
\right] \le
\frac{8e^2\gamma\mu^2}{n}
\left(
\int_0^{\bar T}\Lc_s\,\mathrm ds
\right)
\sum_{i\in \bar \Tl}\lambda_i.
\label{eq:F-before-loss}
\end{equation}
By Lemma~\ref{lem:integrated-loss},
\[
\int_0^{\bar T}\Lc_s\,\mathrm ds
\le
\frac{n}{1-\nu}
\left(
C_\lambda^{2r}\rho^2+2\delta^2\bar T
\right).
\]
Plugging this and
\begin{equation}
\label{eq:tail-spectrum-1}
  \sum_{i\in\bar\Tl}\lambda_i \ \le  \sum_{i\geq 1} \min(C_\lambda i^{-\beta}, {\bar T}^{-1})
  \ \le\ c_{1,\beta}\,\bar T^{-\frac{\beta-1}{\beta}} ,
\end{equation}
into \eqref{eq:F-before-loss} yields
\begin{equation}
\E\left[
F_\tau\mathbf 1_{\mathcal A}
\right]\le
C\gamma\mu^2
\left(
\rho^2+\delta^2\bar T
\right)
\bar T^{-\frac{\beta-1}{\beta}}
=
C\gamma\mu^2
\left(
\rho^2\bar T^{-\frac{\beta-1}{\beta}}
+
\delta^2\bar T^{\frac1\beta}
\right).
\label{eq:F-two-terms}
\end{equation}
 Since
$\bar T=L\tprior$,
\begin{equation}
\rho^2\bar T^{-\frac{\beta-1}{\beta}}=
\rho^2\tprior^{-2r}
L^{-\frac{\beta-1}{\beta}}\tprior^{-(\frac{\beta-1}{\beta}-2r)}
=M^2L^{-\frac{\beta-1}{\beta}}\tprior^{-\alpha}
\le M^2,
\label{eq:F-signal-bound}
\end{equation}
because $\alpha>0$ by \ref{ass:window}. For the second term,
\begin{equation}
\delta^2\bar T^{1/\beta}=\delta^2\tprior
L^{1/\beta}\tprior^{-\frac{\beta-1}{\beta}}=
M^2
L^{1/\beta}
\tprior^{-\frac{\beta-1}{\beta}}
\le M^2,
\label{eq:F-noise-bound}
\end{equation}
where the last inequality holds because, by \eqref{eq:L-log}, $L^{1/\beta}\tprior^{-(\beta-1)/\beta}\le\bigl(C_L\log(\rho/\delta)\bigr)^{1/\beta}(\rho/\delta)^{-2(\beta-1)/(\beta(2r+1))}\le1$ once $\delta/\rho\le\varepsilon_0$, for $\varepsilon_0$ small enough in terms of $\beta,r,C_L$, hence of $r,\beta,\kstop,\nu$.
Combining
\eqref{eq:F-two-terms}--\eqref{eq:F-noise-bound} proves
\begin{equation}
\label{eq:fluctuation-error}
\E\left[
F_\tau\mathbf 1_{\{\tau\le\bar T\}}
\right]
\le
C_3\gamma\mu^2M^2.
\end{equation}

It remains to derive the high-probability statement. By Markov's inequality,
\[
\Prob\left(
F_\tau>LM^2,\ \tau\le\bar T
\right)
\le
\frac{
\E\left[F_\tau\mathbf 1_{\{\tau\le\bar T\}}\right]
}{
LM^2
}
\le
\frac{C_3\gamma\mu^2}{L}.
\]
On the event $\{\tau\le\bar T\}\cap\{F_\tau\le LM^2\}$, the pathwise estimate \eqref{eq:pathwise-error} gives
\[
\norm{\eta_\tau}^2
\le
\left[
C_2(1+\kstop^2)+1
\right]LM^2
\le
C'(1+\kstop^2)L\,M^2.
\]
A union bound with Theorem~\ref{thm:no-late-hp}, followed by \eqref{eq:L-log} (its upper bound in the error term, its lower bound $L\ge c_L\log(\rho/\delta)$ in the failure probability), concludes the proof.
\end{proof}

\section{Numerical Experiments} \label{sec:numerics}

This section reports two numerical studies. The first evaluates the
incoherence quantities on structured and data-driven designs,
and quantifies how much is gained by working with the spectrum-weighted
parameter rather than the uniform one. The second applies the discrepancy
principle at macroscopic step sizes to a classical ill-posed test problem, and
compares the resulting reconstruction error and data-access cost against
Landweber regularization.

\subsection{The (weighted) incoherence}\label{sec:incoherence}

Recall the two incoherence quantities associated with the left singular
vectors $u_1,\dots,u_m$ of $X/\sqrt n$,
\[
  \mu^2 := \max_{j\le m}\mu_j^2 ,
  \qquad
  \mu_\star^2 := \frac{\sum_{j\le m}\mu_j^2\lambda_j}{\Tr K} ,
\]
where $\mu_j^2 := n\max_{i\le n} u_{j,i}^2$; these always satisfy
$1\le\mu_\star^2\le\mu^2$. For two classical designs both quantities can be computed exactly; they mark the regime in which weighting brings nothing.

\begin{example}[Translation-invariant designs]\label{ex:convolution}
If $K=\frac1n XX^\top$ is circulant, as happens for the discretization of a
translation-invariant operator on a uniform periodic grid, its eigenvectors
may be taken to be the discrete Fourier basis, so that $|u_{j,i}|=n^{-1/2}$ for
all $i,j$. Hence $\mu_j^2=1$ for every $j$ and $\mu^2=\mu_\star^2=1$. The same
holds for any eigenbasis with entries of constant modulus, such as the
normalized Hadamard basis. Classical convolution and deblurring models are
therefore perfectly incoherent.
\end{example}

\begin{example}[Gaussian designs]\label{ex:rmt}
If the entries of $X\in\mathbb R^{n\times d}$ are independent standard
Gaussians, then $X$ has rank $m=n\wedge d$ almost surely and, by left orthogonal
invariance, the left singular frame of $X/\sqrt n$ is distributed as the first $m$ columns of a
Haar orthogonal matrix.
Maximal-entry estimates for Haar matrices \cite{jiang2005maxima} give
$\max_{j\le m}\max_{i\le n}|u_{j,i}|^2\le Cn^{-1}\log n$ with probability at
least $1-n^{-c}$, whence $\mu_\star^2\le\mu^2\le C\log n$. The logarithmic
factor is sharp, the largest entry being of order $\sqrt{\log n/n}$.
\end{example}

In the translation-invariant example the two parameters coincide, since the profile $j\mapsto\mu_j^2$ is flat. For Gaussian designs, the same logarithmic upper bound applies to both $\mu^2$ and $\mu_\star^2$, although they need not coincide for a given realization. More generally, $\mu_\star^2$ replaces the maximum over singular directions by an average weighted by the spectral mass $\lambda_j/\Tr K$, and therefore yields a substantial gain only when the $\mu_j^2$ vary with $j$ and their large values occur in directions of small spectral mass. This is precisely the pattern observed in the data-driven designs below, where the leading singular vectors are more delocalized while the tail becomes progressively more localized, so that the spectral weighting suppresses the contribution of the latter.

We evaluate $\mu^2$ and $\mu_\star^2$ on three designs with polynomially
decaying spectra, chosen to span a deterministic discretized operator, a real
image dataset, and a kernel matrix arising from a genuine regression problem:

\begin{enumerate}
\renewcommand{\labelenumi}{(\roman{enumi})}
\item \emph{Phillips.} The classical Phillips test problem in the
discretization of \cite{hansen2007regularization}, with exact closed-form
Galerkin entries. The design is deterministic, so no averaging over seeds is
required; it is also the design used in Section~\ref{sec:stopping-numerics}.

\item \emph{MNIST.} Raw $784$-dimensional image vectors, subsampled at random \cite{deng2012mnist}.

\item \emph{Superconductivity.} The empirical kernel matrix of a Mat\'ern
kernel with parameter $3/2$ and a median-heuristic lengthscale, evaluated on the UCI
Superconductivity dataset ($d=81$), with rows and columns subsampled at random \cite{Hamidieh2018}.
\end{enumerate}

For each design we vary the sample size over
$n\in\{128,\,288,\,512,\,1152,\,2048\}$. For MNIST and Superconductivity we
report the mean over $15$ random subsamples, with shaded bands indicating one
standard deviation; Phillips is deterministic and appears as a single curve. In
all configurations the empirical spectra exhibit polynomial decay.

The results are displayed in Figure~\ref{fig:incoherence}. The uniform
incoherence separates sharply by design type. For MNIST and Superconductivity,
$\mu^2$ grows by roughly a factor of $30$ and $20$ respectively across the
range of $n$ considered, reaching values in excess of $10^3$. For Phillips it
is essentially constant, increasing by about $20\%$ from $\mu^2\approx 12$ to
$\mu^2\approx 14$. The weighted incoherence behaves very differently: it
increases by less than a factor of two for both data-driven designs, and is
flat at $\mu_\star^2\approx 1.87$ for Phillips, close to the universal lower
bound $\mu_\star^2\ge 1$. Its observed growth is at worst compatible with a
logarithmic dependence on $n$, consistent with Example~\ref{ex:rmt}.

The third panel reports the ratio $\mu^2/\mu_\star^2$ directly. For Phillips the
gain from weighting is modest and stable, between roughly $6$ and $7.5$ across
the whole range. For MNIST and Superconductivity it grows to between $100$ and
$150$ at $n=2048$.

\begin{figure}[t]
\centering
\includegraphics[width=\textwidth]{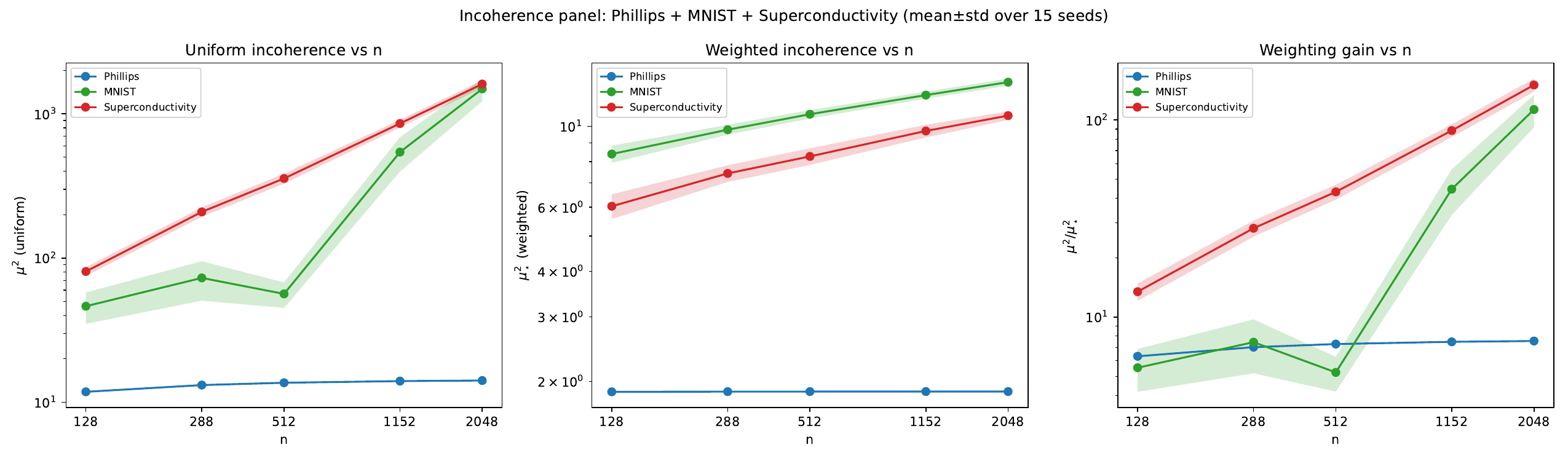}
\caption{Uniform incoherence $\mu^2$ (left), weighted incoherence $\mu_\star^2$
(center), and their ratio (right) as functions of the sample size $n$, for the
Phillips discretization, MNIST, and a Mat\'ern kernel matrix on the UCI
Superconductivity dataset. Curves for MNIST and Superconductivity show the mean
over $15$ random subsamples with one-standard-deviation bands; Phillips is
deterministic. Note the differing vertical scales.}
\label{fig:incoherence}
\end{figure}

On the data-driven designs the uniform incoherence is thus larger by two orders of magnitude and grows far faster in $n$ than the weighted one; since $\mu_\star^2$ governs the step-size condition, this is what makes the admissible step size $n$-independent, up to logarithms, in the regimes of interest.

Finally, on the same designs we evaluate the two admissible step sizes: $\gamma_{\rm
ours}=(4\mu_\star^2\kdat)^{-1}$ from \ref{ass:stepsize}, and the ceiling
$\gamma_{\rm cap}(a)=(32\,\xi_aR_a)^{-1/(1-a)}$ of \cite{varre2021last},
translated to the present notation and maximized over $a$ on the grid
$\{0.02,0.04,\dots,0.88\}$. Here, $\xi_a = \zeta(1+a)$ and $R_a = \max_{i\le n} \sum_j \lambda_j^{1-a} u_{j,i}^2$. The maximizer is interior for every design and every
$n$: $a_{\rm opt}=0.12$ for Phillips, $0.08$ for MNIST, and $0.08$ falling to
$0.06$ for Superconductivity.
At the optimum the ratio $\gamma_{\rm ours}/\gamma_{\rm cap}(a_{\rm opt})$ is
$109$ for Phillips, essentially constant in $n$, and decreases mildly for the two
data-driven designs, from $58\pm4$ to $47\pm3$ for MNIST and from $43\pm3$ to
$30\pm1$ for Superconductivity over $n\in[128,2048]$. Neither constant in the step-size conditions is optimized, so these
factors are indicative rather than sharp.

\subsection{Numerical evaluation of the stopping rule}
\label{sec:stopping-numerics}

The second experiment tests whether the discrepancy principle at a macroscopic step size attains the accuracy of Landweber regularization, and quantifies the saving in data access.

We use the Phillips test problem at $n=d=1000$, with two ground truths sharing
the same design matrix $X$. The first is the solution shipped with the
discretization, which is comparatively rough. The second, which we call
\emph{smoothed Phillips}, is
\[
  \theta^\star_{\mathrm{sm}}
  = \bar\theta / \|\bar\theta\|_\infty,
  \qquad
  \bar\theta = X^\top X X^\top y,
\]
which by construction concentrates on the leading singular directions and
therefore satisfies a source condition with a substantially larger exponent
$r$.

The noise level is parametrized through the signal-to-noise ratio rather than
in absolute terms, which makes the two ground truths directly comparable
despite their differing signal norms:
\[
  \norm{\eps} = \frac{\|X\theta^\star\|}{\sqrt{\mathrm{SNR}}},
  \qquad
  \mathrm{SNR}\in\{10^2,10^3,10^4,10^5\},
\]
with $\varepsilon$ Gaussian, rescaled to that norm. The stopping rule is
evaluated against the realized noise, so that the threshold is
$\kstop{}\norm{\eps}$ exactly, with $\kstop{}=1.2$ as in
\cite{jahn2020discrepancy}. Each cell of the design is averaged over $6$ noise
realizations and $5$ algorithmic seeds, for $30$ draws in total; shaded bands
report one standard deviation.

We compare three dynamics, all initialized at $\theta_0=0$ and all stopped by
the same rule. Landweber regularization at its canonical step
$\gamma_{\mathrm{LW}}=1/\lambda_{\max}\approx29.7$, admissible for the
deterministic iteration alone, serves as the reference. Stochastic gradient
descent is run at constant step size, swept over five values marked on the
horizontal axis of Figure~\ref{fig:sweep}: the threshold
$\gamma_{\mathrm{ours}}=(4\mu_\star^2\kdat)^{-1}\approx1.31$ supplied by our
step-size condition, which is not a stability requirement but what the proof of
the discrepancy-principle guarantee needs; the geometric midpoint
$\sqrt{\gamma_{\mathrm{ours}}\gamma_{\mathrm{sgd}}}\approx3.48$; the classical sufficient stability bound $\gamma_{\mathrm{sgd}}=(\max_i\norm{x_i}^2)^{-1}\approx9.26$ for the constant-step recursion; the envelope ceiling
$\gamma_{\mathrm{ceil}}=2(\mu_\star^2\kdat)^{-1}\approx10.5$, at which the total
mass of the Volterra kernel reaches one and the variance envelope ceases to close; and
$\gamma_{\mathrm{LW}}$ itself.

Since $\nu=\gamma/\gamma_{\mathrm{ceil}}$, these correspond to
$\nu=\tfrac18,\,0.33,\,0.88,\,1,\,2.83$, so the sweep crosses the thresholds of
the analysis: only $\gamma_{\mathrm{ours}}$ satisfies \ref{ass:stepsize}, the block iteration
contracts for the two smallest, and $\gamma_{\mathrm{LW}}$ lies beyond the
envelope ceiling. We fix $\kstop=1.2$ throughout, below the value
$\kappa_0\le4.71$ required by Theorem~\ref{thm:no-late-hp} but above the
threshold $\sqrt{(1-\nu)/(1-2\nu)}=1.08$ that Remark~\ref{rem:hp-cost} suggests
is operative at $\nu=\tfrac18$.

Finally, the SDE model \eqref{eq:sde} is integrated by Euler--Maruyama with a
fixed time step ${\rm d} t=0.1/\lambda_{\max}\approx2.97$, held the same at every
$\gamma$; the step size enters only through the diffusion amplitude
$\sqrt{\gamma\,{\rm d} t/n}$, the drift being the continuous gradient flow. One
stochastic gradient step advances flow time by $\gamma$, so stopping times are
compared as $k\,{\rm d} t$ for the diffusion against $k\gamma$ for the iteration.
The model is a consistency check and not a runnable algorithm, so it appears in
the error comparison but not in the cost accounting.

At $\gamma_{\mathrm{LW}}$ the SGD iteration never reached the discrepancy
threshold, in any of the $30$ runs, at any noise level, for either ground truth,
so no stopping time is defined and the curve is absent there; the efficiency
panel is truncated at $\gamma_{\mathrm{ceil}}$ for the same reason. The diffusion
model does still reach the threshold at $\gamma_{\mathrm{LW}}$, though less
reliably and at a visibly worse error. This should not be read as the continuous
model tolerating a step size the iteration cannot: the Euler--Maruyama stability
is governed by $\mathrm{d}t$, which is a factor ten inside the deterministic
limit, whereas $\gamma$ enters the diffusion amplitude only.

Cost is measured in row accesses: one per iteration for stochastic gradient
descent, $n$ per iteration for Landweber. We report the efficiency ratio
$\mathrm{cost}_{\mathrm{LW}}/\mathrm{cost}_{\mathrm{SGD}}$ at the stopping time.

Figure~\ref{fig:sweep} displays the outcome. The two ground truths behave qualitatively differently, in a way that is consistent with the role of Assumption~\ref{ass:window} in our analysis, where it is critical for controlling the variance envelope.
 For the rough ground truth, the relative error of the stochastic iteration
is comparable to that of Landweber stopped by the same rule at every step size
and every noise level, while the efficiency rises monotonically with $\gamma$,
from roughly $42--45$ at $\gamma_{\mathrm{ours}}$ to $116--202$ at
$\gamma_{\mathrm{sgd}}$.
 For the smoothed ground truth, the
stochastic iteration is substantially less accurate than Landweber under the
identical stopping rule, at every noise level.
The gap widens as the noise level decreases.
This is the saturation, or finite-qualification, phenomenon discussed after Theorem~\ref{thm:main-log}.
The smoothed ground truth was constructed to
satisfy a source condition with a large exponent, and at the spectral decay of
the Phillips discretization it violates the condition $2r<(\beta-1)/\beta$
substantially.
 Consequently the efficiency ratio must be read differently in the
two cases. For the rough ground truth the accuracies are matched, and a
cost ratio of $40$ to $200$ is a genuine saving. For the smoothed ground truth
they are not, and the reported ratios of $10$ to $70$ come with the cost of a significantly worse accuracy.
For the rough ground truth, accuracy is retained up to $\gamma_{\rm ceil}$: the restriction \ref{ass:stepsize} demanded by the proof is conservative, and it is the Volterra ceiling that the dynamics respects.

Finally, the diffusion model tracks the stochastic iteration closely in relative error throughout the sampled range of step sizes, supporting its use as a model for the discrete dynamics at macroscopic step sizes.

\begin{figure}[htbp]
\centering
\includegraphics[width=\textwidth]{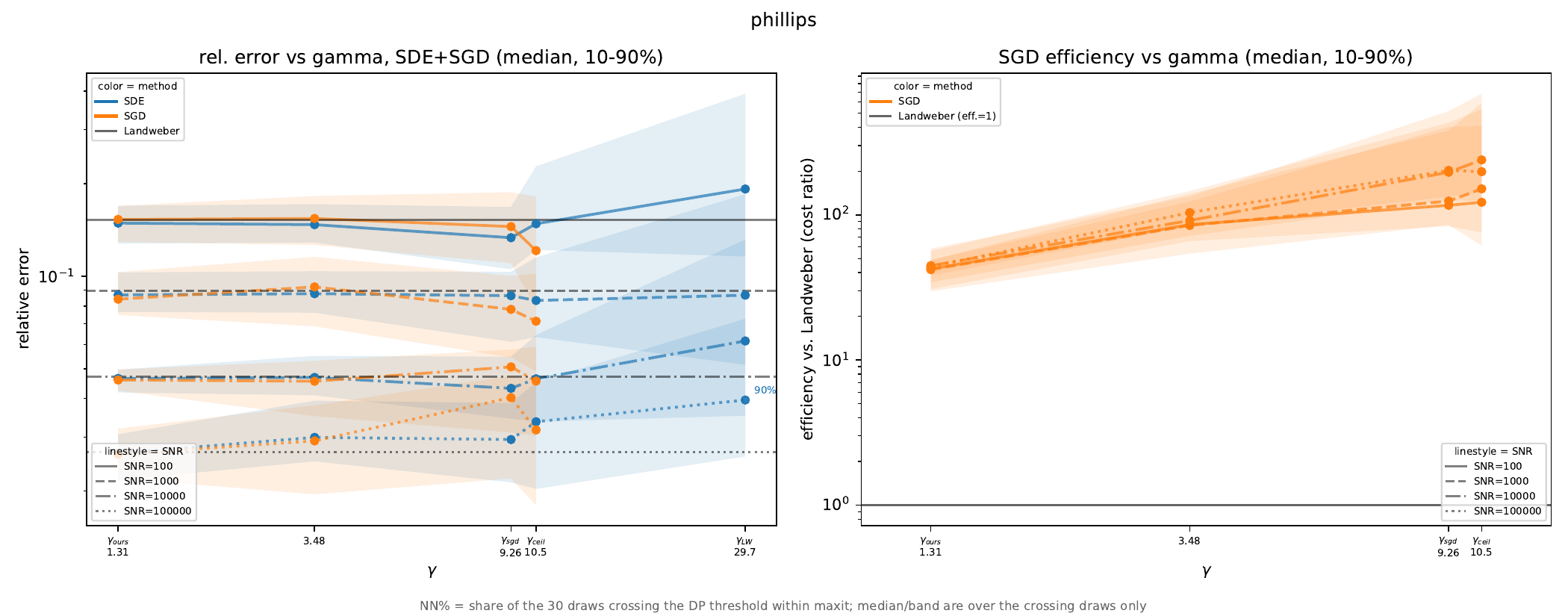}\\[1ex]
\includegraphics[width=\textwidth]{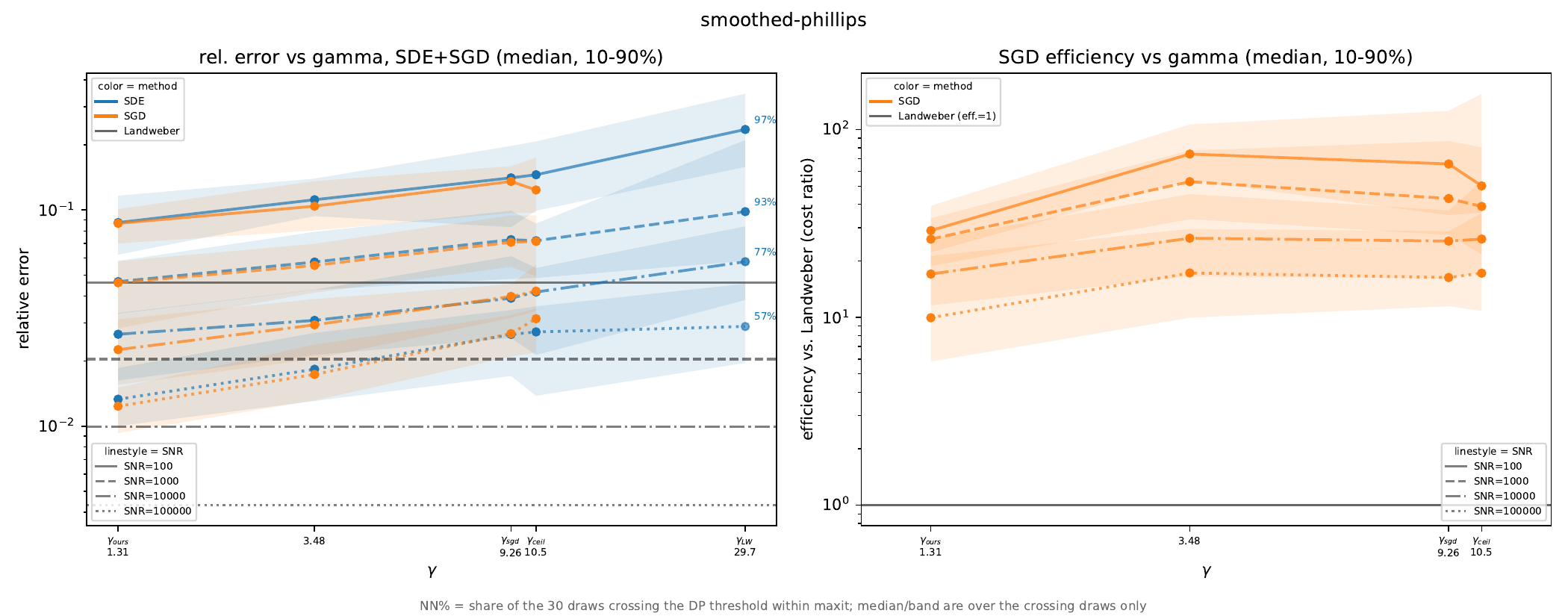}
\caption{Phillips test problem, $n=1000$, discrepancy principle with
$\kstop{}=1.2$. Top row: the ground truth shipped with the discretization. Bottom
row: the smoothed ground truth, which satisfies a source condition with a
substantially larger exponent. Left column: relative error at the stopping time
against the constant step size $\gamma$, for the stochastic iteration and the
diffusion model, with Landweber shown as a horizontal reference. Right column:
data-access cost of the stochastic iteration relative to Landweber, truncated at
$\gamma_{\mathrm{ceil}}$, the largest sampled step size at which that iteration
still reached the threshold. Line style encodes the signal-to-noise ratio; curves show the median over the $30$ draws and bands the $10$th--$90$th percentile. At the canonical Landweber step
$\gamma_{\mathrm{LW}}\approx 29.7$ the stochastic iteration never reached the threshold, in any of the $30$ realizations, and is absent; the diffusion model still reaches the threshold.
Inline percentages give the share of the $30$ draws whose residual crossed the
discrepancy threshold within the iteration budget; where none is shown, all
draws crossed.}
\label{fig:sweep}
\end{figure}

\section{Remaining proofs}\label{sec:proofs}
We first record an auxiliary lemma.
\begin{lemma}[Spectral tail estimates]
\label{lem:spectral-tails}
Let $\lambda_1\ge\lambda_2\ge\dots\ge0$ satisfy \ref{ass:spectrum}, and let
$p\in\{1,2\}$. Then, for every $t>0$,
\begin{align}
  \sum_{j\,:\,\lambda_j<1/t}\lambda_j^{\,p}
  &\le c_{p,\beta}\,(1\vee t)^{-\left(p-\frac1\beta\right)},
  \label{eq:tailsum}\\[2pt]
  \sum_{j\ge1}\lambda_j^{\,p}e^{-2\lambda_j t}
  &\le c_{p,\beta}\,(1\vee t)^{-\left(p-\frac1\beta\right)},
  \label{eq:expsum}
\end{align}
where
\[
c_{p,\beta}
:=
\max\left\{
C_\lambda^{\,p}\zeta(p\beta),
\,
1+C_\lambda^{1/\beta}
+\frac{C_\lambda^{1/\beta}}{p\beta-1}
\right\},
\]
which depends only on $p$, $\beta$, and $C_\lambda$.
\end{lemma}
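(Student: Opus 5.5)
The plan is to reduce both bounds to a comparison between the counting function of the spectrum and an integral. The only input is \ref{ass:spectrum}, $\lambda_j\le C_\lambda j^{-\beta}$, and ordering. I treat \eqref{eq:tailsum} first and then deduce \eqref{eq:expsum} by a dyadic or layer decomposition.

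\textbf{The tail sum \eqref{eq:tailsum}.} First suppose $t\le1$. Then the sum is at most the full sum, $\sum_j\lambda_j^p\le C_\lambda^p\zeta(p\beta)$, which is finite because $p\beta>1$. This gives the first entry of the maximum defining $c_{p,\beta}$.

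Now suppose $t>1$, and set $J:=\lceil (C_\lambda t)^{1/\beta}\rceil$. For $j\ge J$ we have $C_\lambda j^{-\beta}\le 1/t$. Hence every term of the sum satisfies
\[
\lambda_j\le\min(1/t,\,C_\lambda j^{-\beta}).
\]
This uses the constraint $\lambda_j<1/t$ for small $j$ and the decay bound for large $j$. I then split the sum at $J$:
\[
\sum_{j<J}t^{-p}+\sum_{j\ge J}C_\lambda^pj^{-p\beta}
\le J\,t^{-p}+C_\lambda^p\Bigl(J^{-p\beta}+\int_J^\infty x^{-p\beta}\,dx\Bigr).
\]
Using $J\le (C_\lambda t)^{1/\beta}+1$ and $J\ge (C_\lambda t)^{1/\beta}$, the last integral is at most $C_\lambda^{1/\beta}t^{-(p-1/\beta)}/(p\beta-1)$. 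For the first term, $Jt^{-p}\le C_\lambda^{1/\beta}t^{1/\beta-p}+t^{-p}$, and $t^{-p}\le t^{-(p-1/\beta)}$ since $t>1$. The boundary term $C_\lambda^pJ^{-p\beta}\le t^{-p}$ is absorbed the same way. The constant I aim for is $1+C_\lambda^{1/\beta}+C_\lambda^{1/\beta}/(p\beta-1)$, possibly with a small adjustment (e.g. a factor $2$ on the $1$) if the boundary term needs separate accounting.

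\textbf{The exponential sum \eqref{eq:expsum}.} For $t\le1$, bound $e^{-2\lambda_jt}\le1$ and reuse the zeta estimate. For $t>1$, split the indices into $\{\lambda_j<1/t\}$, which is controlled by \eqref{eq:tailsum}, and $\{\lambda_j\ge1/t\}$. On the second set, use $\lambda^pe^{-2\lambda t}\le (p/2e)^p t^{-p}$. That set has at most $(C_\lambda t)^{1/\beta}$ elements, because $\lambda_j\ge1/t$ forces $j\le (C_\lambda t)^{1/\beta}$. So the head contributes $\lesssim C_\lambda^{1/\beta}t^{1/\beta-p}$, and $(p/2e)^p\le1$ for $p\in\{1,2\}$.

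\textbf{Main obstacle.} The only delicate point is bookkeeping: the combined constant must come out as exactly the stated $c_{p,\beta}$ rather than twice it. Adding the head and tail bounds naively gives roughly $2C_\lambda^{1/\beta}$. To avoid this, I would not split into head and tail. Instead I would bound termwise
\[
\lambda_j^pe^{-2\lambda_jt}\le\min\bigl(t^{-p}(p/2e)^p,\;C_\lambda^pj^{-p\beta}\bigr)\le\min\bigl(t^{-p},\,C_\lambda^pj^{-p\beta}\bigr).
\]
This is the same majorant used for \eqref{eq:tailsum}, so the identical counting argument yields the same constant. If exact constants still fail to match, I would accept a harmless enlargement of $c_{p,\beta}$, since only its dependence on $p,\beta,C_\lambda$ is used later.
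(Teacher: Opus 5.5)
Your proposal is correct and is essentially the paper's argument: the zeta bound for $t\le1$, and for $t\ge1$ a split at $\lceil (C_\lambda t)^{1/\beta}\rceil$ using the common termwise majorant $\min(t^{-p},C_\lambda^pj^{-p\beta})$ for both sums, with the head bounded by counting and the tail by an integral comparison. The factor-$2$ worry is unfounded, because the indices $j<J$ number only $J-1$ and $(J-1)t^{-p}+C_\lambda^pJ^{-p\beta}\le Jt^{-p}$ (equivalently, split at $j\le J$ versus $j>J$ as the paper does, with $\sum_{j>J}j^{-p\beta}\le\int_J^\infty x^{-p\beta}\,dx$), so you obtain exactly $c_{p,\beta}$ and need no enlargement.
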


\begin{proof}[Proof of Lemma~\ref{lem:spectral-tails}]
For $t\le1$, both left-hand sides are bounded by
\[
\sum_{j\ge1}\lambda_j^{\,p}
\le
C_\lambda^{\,p}\sum_{j\ge1}j^{-p\beta}
=
C_\lambda^{\,p}\zeta(p\beta),
\]
which is finite since $\beta>1$ and $p\in\{1,2\}$.

Let now $t\ge1$ and set
\[
N:=\left\lceil (C_\lambda t)^{1/\beta}\right\rceil.
\]

For the indices $j>N$, Assumption~\ref{ass:spectrum} and an integral comparison give
\[
\sum_{j>N}\lambda_j^{\,p}
\le
C_\lambda^{\,p}\sum_{j>N}j^{-p\beta}
\le
\frac{C_\lambda^{\,p}}{p\beta-1}
N^{-(p\beta-1)}
\le
\frac{C_\lambda^{1/\beta}}{p\beta-1}
t^{-\left(p-\frac1\beta\right)}.
\]
This bounds the contribution of $j>N$ in both \eqref{eq:tailsum} and
\eqref{eq:expsum}, since $e^{-2\lambda_jt}\le1$.

For the indices $j\le N$, in \eqref{eq:tailsum} every term that appears satisfies
$\lambda_j<1/t$, and therefore $\lambda_j^p\le t^{-p}$. Hence
\[
\sum_{\substack{j\le N\\ \lambda_j<1/t}}\lambda_j^p
\le
Nt^{-p}.
\]
For \eqref{eq:expsum}, since $p\in\{1,2\}$,
\[
\lambda_j^pe^{-2\lambda_jt}
\le
\sup_{x>0}x^pe^{-2xt}
=
\left(\frac{p}{2e}\right)^pt^{-p}
\le
t^{-p}.
\]
Thus in either case
\[
\sum_{j\le N}(\cdot)
\le
Nt^{-p}
\le
\bigl(1+(C_\lambda t)^{1/\beta}\bigr)t^{-p}
\le
\bigl(1+C_\lambda^{1/\beta}\bigr)
t^{-\left(p-\frac1\beta\right)},
\]
where we used $t\ge1$.

Adding the contributions of $j\le N$ and $j>N$ proves
\eqref{eq:tailsum} and \eqref{eq:expsum}.
\end{proof}

\subsection{Proofs of Section \ref{sec:fluct}}

\begin{proof}[Proof of Lemma~\ref{lem:integrated-loss}]
Integrating \eqref{eq:resolvent-bound} over \(t\in[0,T]\) and using
Tonelli's theorem,
\begin{align*}
\int_0^T\Lc_t\,\mathrm dt
&\le
\int_0^T\norm{\bar r_t}^2\,\mathrm dt
+
\int_0^T
\int_0^t
\Res(t-s)\norm{\bar r_s}^2
\,\mathrm ds\,\mathrm dt\\
&\le
\left(1+\norm{\Res}_{L^1(0,\infty)}\right)
\int_0^T\norm{\bar r_s}^2\,\mathrm ds\\
&=
\frac1{1-\nu}
\int_0^T\norm{\bar r_s}^2\,\mathrm ds.
\end{align*}
Using
\[
\bar r_s
=
\bar r_s^{\mathrm{app}}
-
\bar r_s^{\mathrm{noise}},
\]
we have
\[
\int_0^T\norm{\bar r_s}^2\,\mathrm ds
\le
2\int_0^\infty
\norm{\bar r_s^{\mathrm{app}}}^2\,\mathrm ds
+
2\int_0^T
\norm{\bar r_s^{\mathrm{noise}}}^2\,\mathrm ds.
\]
Moreover,
\[
\int_0^\infty
\norm{\bar r_s^{\mathrm{app}}}^2\,\mathrm ds
=
\frac n2\norm{\eta_0}^2
\le
\frac n2 C_\lambda^{2r}\rho^2,
\]
while \eqref{eq:noise-res} gives
\[
\int_0^T
\norm{\bar r_s^{\mathrm{noise}}}^2\,\mathrm ds
\le
n\delta^2T.
\]
Combining the last three displays proves \eqref{eq:integrated-loss}.
\end{proof}

\begin{proof}[Proof of Lemma \ref{lem:kernel-decay}]
Since $\mu_i^2\le\mu^2$, Lemma~\ref{lem:spectral-tails} applied with $p=2$
(admissible because $2\beta>1$) gives, for every $u>0$,
\[
  k(u)
  \ =\ \gamma\sum_{i}\mu_i^2\lambda_i^2e^{-2\lambda_iu}
  \ \le\ \gamma\mu^2\sum_{i}\lambda_i^2e^{-2\lambda_iu}
  \ \le\ \gamma\mu^2\,c_k\,(1\vee u)^{-\left(2-\frac1\beta\right)},
  \qquad c_k:=c_{2,\beta}.
\]
 This proves the claimed bound for \(k\). For the resolvent, note that \(k\) is nonincreasing. On the simplex
\[
\{(s_1,\ldots,s_\ell)\in\R_+^\ell:s_1+\cdots+s_\ell=u\},
\]
at least one coordinate satisfies \(s_j\ge u/\ell\). Decomposing according to
such an index and enlarging the integration domain of the remaining
coordinates gives
\begin{equation*}
k^{*\ell}(u)
\le
\ell\nu^{\ell-1}\sup_{s\ge u/\ell}k(s).
\end{equation*}
Consequently,
\begin{equation*}
k^{*\ell}(u)\le
\ell\nu^{\ell-1}
\gamma\mu^2c_k(1\vee u/\ell)^{-a}\le
\ell^{1+a}\nu^{\ell-1}
\gamma\mu^2c_k(1\vee u)^{-a}.
\end{equation*}
Since \(1+a=3-1/\beta\) and \(\nu\le1/8\), summing over \(\ell\ge1\) gives
\[
\Res(u)
\le
\gamma\mu^2c_{\Res}
(1\vee u)^{-(2-1/\beta)},
\]
where
\[
c_{\Res}
:=
c_k
\sum_{\ell\ge1}
\ell^{3-1/\beta}8^{-(\ell-1)}
<\infty.
\]
\end{proof}

\begin{proof}[Proof of Proposition \ref{prop:variance-envelope}]
By \eqref{eq:V-resolvent},
\[
V_t
\le
\int_0^t
\Res(t-s)\|\bar r_s\|^2\,\mathrm ds.
\]
Splitting the integral at \(t/2\), monotonicity of
\(s\mapsto\|\bar r_s\|\) and \eqref{eq:resolvent} give
\[
\int_{t/2}^t
\Res(t-s)\|\bar r_s\|^2\,\mathrm ds
\le
\frac{\nu}{1-\nu}\|\bar r_{t/2}\|^2.
\]
On the other hand, setting \(a:=2-1/\beta\), Lemma~\ref{lem:kernel-decay}
yields
\[
\int_0^{t/2}
\Res(t-s)\|\bar r_s\|^2\,\mathrm ds
\le
c_{\Res}\gamma\mu^2
\bigl(1\vee\tfrac t2\bigr)^{-a}
\int_0^{t/2}\|\bar r_s\|^2\,\mathrm ds.
\]
Using the decomposition of the mean residual into approximation and noise
parts,
\begin{align*}
\int_0^{t/2}\|\bar r_s\|^2\,\mathrm ds
&\le
2\int_0^\infty
\|\bar r_s^{\mathrm{app}}\|^2\,\mathrm ds
+
2\int_0^{t/2}
\|\bar r_s^{\mathrm{noise}}\|^2\,\mathrm ds\\
&\le
n\,v^\top\Sigma^{2r}v+n\delta^2t\\
&\le
C_\lambda^{2r}n\rho^2+n\delta^2t.
\end{align*}
Here we used
\[
\|\bar r_s^{\mathrm{app}}\|^2
=
n\,v^\top\Sigma^{2r+1}e^{-2\Sigma s}v,
\qquad
\int_0^\infty \lambda e^{-2\lambda s}\,\mathrm ds=\frac12,
\]
together with
\(\|\bar r_s^{\mathrm{noise}}\|\le\norm{\eps}\le\sqrt n\,\delta\).
This proves \eqref{eq:variance-split}. We now assume \ref{ass:window}. First,
\[
\frac{\nu}{1-\nu}
\le
\frac47\gamma\mu^2\Tr K
\le
C\gamma\mu^2,
\]
where we used
\(\mu_\star^2\le\mu^2\) and
\(\Tr K\le C_\lambda\sum_{j\ge1}j^{-\beta}\).
For \(t\ge2\), \eqref{eq:app-decay} and \eqref{eq:noise-res} give
\[
\|\bar r_{t/2}\|^2
\le
C n\rho^2t^{-(2r+1)}
+
2n\delta^2,
\]
so the local term in \eqref{eq:variance-split} satisfies the desired envelope. For the memory term, the compatibility condition gives
\[
a=2-\frac1\beta>2r+1,
\]
and hence, for \(t\ge2\),
\[
t^{-a}\rho^2\le t^{-(2r+1)}\rho^2,
\qquad
t^{1-a}\delta^2
=
t^{-(1-\frac1\beta)}\delta^2
\le\delta^2.
\]
Thus \eqref{eq:variance-envelope} holds for \(t\ge2\). Finally, for \(t\le2\),
\[
V_t
\le
(k*\Lc)(t)
\le
\frac{\nu}{1-\nu}\|r_0\|^2
\le
C\gamma\mu^2n(\rho^2+\delta^2),
\]
where
\[
\|r_0\|^2
\le
2n\bigl(C_\lambda^{2r+1}\rho^2+\delta^2\bigr).
\]
Since
\((1\vee t)^{-(2r+1)}\ge2^{-(2r+1)}\) on \(t\le2\), this is absorbed into
the right-hand side of \eqref{eq:variance-envelope}.
\end{proof}

\subsection{Proofs of Section \ref{sec:thm1}}

\begin{proof}[Proof of Lemma \ref{lem:contraction}]
Conditionally on \(\mathcal F_a\), the process
\((r_{a+s})_{s\ge0}\) solves \eqref{eq:residual-sde} from the
\(\mathcal F_a\)-measurable initial condition \(r_a\). Repeating the steps leading from \eqref{eq:duhamel} to
\eqref{eq:volterra}, and using Duhamel's formula together with the conditional
It\^o isometry, we obtain
\begin{equation}
\E\left[
\norm{r_{a+s}}^2
\,\middle|\,
\mathcal F_a
\right]
=
\norm{e^{-sK}r_a}^2
+
\gamma n
\int_0^{s}
\sum_i
\lambda_i^2
e^{-2\lambda_i(s-u)}
\E\left[
\norm{R_{a+u}^{\top}u_i}^2
\,\middle|\,
\mathcal F_a
\right]
\mathrm du.
\nonumber
\end{equation}

Setting
\[
\Lc_s^{(a)}
:=
\E\left[
\norm{r_{a+s}}^2
\,\middle|\,
\mathcal F_a
\right],
\]
we get the conditional Volterra inequality
\begin{equation}\label{eq:voltcond}
\Lc_s^{(a)}
\le
\norm{e^{-sK}r_a}^2
+
(k*\Lc^{(a)})(s).
\end{equation}
Iterating it as in the proof of Proposition~\ref{prop:volterra} yields
\[
\sup_{0\le s\le t_b}\Lc_s^{(a)}
\le
\frac{\norm{r_a}^2}{1-\nu}.
\]
Consequently, the stochastic term on the right-hand side of
\eqref{eq:voltcond} is bounded by
\[
\frac{\nu}{1-\nu}\norm{r_a}^2.
\]

For the deterministic term, we use
\[
e^{-2\lambda_it_b}\le e^{-2}
\quad\text{on }\Hd,
\qquad
e^{-2\lambda_it_b}\le1
\quad\text{on }\Tl.
\]
It follows that
\[
\norm{e^{-t_bK}r_a}^2
=
\sum_i
e^{-2\lambda_it_b}
\langle r_a,u_i\rangle^2
\le
e^{-2}\norm{r_a}^2
+
\mathcal E_a^{\mathrm{tail}}.
\]
Combining the last two estimates proves \eqref{eq:one-step}.
\end{proof}

\begin{proof}[Proof of Lemma \ref{lem:tail-det}]
By \eqref{eq:mean-flow}, $\bar r_i(a)=e^{-\lambda_ia}\langle r_0,u_i\rangle$, while
$X^\top u_i=\sqrt{n\lambda_i}\,v_i$ and \ref{ass:source} give
$\langle X\eta_0,u_i\rangle=\sqrt n\,\lambda_i^{r+1/2}w_i$ for $i\le m$, where $w_i:=\langle v,v_i\rangle$; for $i>m$ we have $X^\top u_i=0$, hence $\langle X\eta_0,u_i\rangle=0$, and we set $w_i:=0$ so that the same identity holds. We hence obtain
\[
\bar r_i(a)^2=e^{-2\lambda_ia}\bigl(\sqrt n\,\lambda_i^{r+1/2}w_i-\eps_i\bigr)^2
\le 2e^{-2\lambda_i a}n\lambda_i^{2r+1}w_i^2+2\eps_i^2,
\qquad \sum_iw_i^2\le\rho^2 .
\]
For \(i\in\Tl\), we have \(\lambda_i<t_b^{-1}\), and hence $\lambda_i^{2r+1}
\le
t_b^{-(2r+1)}$. Summing over \(i\in\Tl\) gives
\[
\overline{\mathcal E}_a^{\mathrm{tail}}
\le
2n\,t_b^{-(2r+1)}\rho^2
+
2\norm{\eps}^2\leq 2n\,C_b^{-(2r+1)}\delta^2
+
2\norm{\eps}^2
\]
by the definitions of \(t_b\) and \(\tprior\).
Since \(C_b\ge1\) and \(\norm{\eps}^2\le n\delta^2\), we conclude the proof.
\end{proof}

\begin{proof}[Proof of Lemma \ref{lem:tail-subcritical}]

For $i>m$, \eqref{eq:duhamel} gives
$\widetilde r_i(a)=0$, since $u_i^\top K=0$.
For $i\le m$, projecting \eqref{eq:duhamel} onto $u_i$ and applying the
It\^o isometry exactly as in \eqref{eq:coordinatewise-integral} gives
\[
\E\bigl[\widetilde r_i(a)^2\bigr]
\le
\gamma\mu_i^2\lambda_i^2
\int_0^a e^{-2\lambda_i(a-s)}\Lc_s\,\mathrm ds .
\]
Therefore, summing over \(i\in\Tl\cap\{1,\ldots,m\}\), using
$e^{-2\lambda_i(a-s)}\le1$ and $\mu_i^2\le\mu^2$, we obtain

\begin{equation}
\label{eq:tail-fluct-first}
\E\left[
\widetilde{\mathcal E}_a^{\mathrm{tail}}
\right]
\le
\gamma\mu^2
\left(
\sum_{i\in\Tl}\lambda_i^2
\right)
\int_0^a\Lc_s\,\mathrm ds.
\end{equation}

 Since $\Tl=\{i:\lambda_i<1/t_b\}$, Lemma~\ref{lem:spectral-tails} with $p=2$
 and $t=t_b\ge1$ gives directly
\begin{equation}
\label{eq:tail-spectrum}
  \sum_{i\in\Tl}\lambda_i^2
  \ \le\ c_{2,\beta}\,t_b^{-\left(2-\frac1\beta\right)} .
\end{equation}

By Lemma~\ref{lem:integrated-loss},
\[
\int_0^a \Lc_s\,\mathrm ds
\le
\frac{n}{1-\nu}
\left(
C_\lambda^{2r}\rho^2+2\delta^2a
\right).
\]

Combining \eqref{eq:tail-fluct-first}, \eqref{eq:tail-spectrum},
and Lemma~\ref{lem:integrated-loss}, we obtain

\begin{equation}
\label{eq:tail-fluct-combined}
\E\left[
\widetilde{\mathcal E}_a^{\mathrm{tail}}
\right]
\le
\frac{C n \gamma\mu^2}{1-\nu}
t_b^{-(2-\frac1\beta)}
\left(\rho^2
+
\delta^2a
\right).
\end{equation}

It remains to estimate the two terms on the right-hand side. For the
signal-dependent term, using \(t_b=C_b\tprior\), we find
\begin{equation}
t_b^{-(2-\frac1\beta)}\rho^2\le
C\tprior^{-(2-\frac1\beta)}\rho^2
=C\delta^2
\tprior^{-\alpha}
=C\delta^2
\left(\frac{\delta}{\rho}\right)^{2c_G}.
\label{eq:signal-exponent}
\end{equation}

For the noise-dependent term, the assumed upper bound on \(a\) gives
\begin{equation}
t_b^{-(2-\frac1\beta)}\delta^2a\le
C\delta^2
\frac{a}{t_b}
t_b^{-(1-\frac1\beta)}\le
C\delta^2
\log(\rho/\delta)
\left(\frac{\delta}{\rho}\right)^{
\frac{2(\beta-1)}{\beta(2r+1)}
}.
\label{eq:noise-exponent}
\end{equation}
Since $\frac{2(\beta-1)}{\beta(2r+1)}>c_G,$
the logarithmic factor can be absorbed by the surplus power. Thus, for $\varepsilon_0$ small enough,
\[
\log(\rho/\delta)
\left(\frac{\delta}{\rho}\right)^{
\frac{2(\beta-1)}{\beta(2r+1)}
}
\le
C
\left(\frac{\delta}{\rho}\right)^{c_G}.
\]
Substituting these estimates into
\eqref{eq:tail-fluct-combined} proves \eqref{eq:tail-subcritical}.
\end{proof}
\bibliographystyle{plain}
\bibliography{references}

@article{schertzer2024stochastic,
  author  = {Schertzer, Adrien and Pillaud-Vivien, Loucas},
  title   = {Stochastic Differential Equations Models for Least-Squares
             Stochastic Gradient Descent},
  journal = {Journal of Machine Learning Research},
  volume  = {27},
  number  = {75},
  pages   = {1--42},
  year    = {2026}
}

@article{jin2020convergence,
  title={On the convergence of stochastic gradient descent for nonlinear ill-posed problems},
  author={Jin, Bangti and Zhou, Zehui and Zou, Jun},
  journal={SIAM Journal on Optimization},
  volume={30},
  number={2},
  pages={1421--1450},
  year={2020},
  publisher={SIAM}
}

@inproceedings{ali2020implicit,
  author    = {Ali, Alnur and Dobriban, Edgar and Tibshirani, Ryan J.},
  title     = {The Implicit Regularization of Stochastic Gradient Flow for
               Least Squares},
  booktitle = {Proceedings of the 37th International Conference on Machine
               Learning (ICML)},
  series    = {Proceedings of Machine Learning Research},
  volume    = {119},
  pages     = {233--244},
  year      = {2020}
}

@article{jahn2020discrepancy,
  author  = {Jahn, Tim and Jin, Bangti},
  title   = {On the Discrepancy Principle for Stochastic Gradient Descent},
  journal = {Inverse Problems},
  volume  = {36},
  number  = {9},
  pages   = {095009},
  year    = {2020}
}

@article{jin2019regularizing,
  author  = {Jin, Bangti and Lu, Xiliang},
  title   = {On the Regularizing Property of Stochastic Gradient Descent},
  journal = {Inverse Problems},
  volume  = {35},
  number  = {1},
  pages   = {015004},
  year    = {2019}
}

@article{jin2021saturation,
  author  = {Jin, Bangti and Zhou, Zehui and Zou, Jun},
  title   = {On the Saturation Phenomenon of Stochastic Gradient Descent for
             Linear Inverse Problems},
  journal = {SIAM/ASA Journal on Uncertainty Quantification},
  volume  = {9},
  number  = {4},
  pages   = {1553--1588},
  year    = {2021}
}

@incollection{jin2025survey,
  author    = {Jin, Bangti and Xia, Yimin and Zhou, Zehui},
  title     = {On the Regularizing Property of Stochastic Iterative Methods
               for Solving Inverse Problems},
  booktitle = {Handbook of Numerical Analysis},
  volume    = {26},
  publisher = {Elsevier},
  pages     = {211--272},
  year      = {2025}
}

@article{ehrhardt2025guide,
  author  = {Ehrhardt, Matthias J. and Kereta, {\v Z}eljko and Liang, Jingwei
             and Tang, Junqi},
  title   = {A Guide to Stochastic Optimisation for Large-Scale Inverse
             Problems},
  journal = {Inverse Problems},
  volume  = {41},
  number  = {5},
  pages   = {053001},
  year    = {2025}
}

@article{jin2022svrg,
  author  = {Jin, Bangti and Zhou, Zehui and Zou, Jun},
  title   = {An Analysis of Stochastic Variance Reduced Gradient for Linear
             Inverse Problems},
  journal = {Inverse Problems},
  volume  = {38},
  number  = {2},
  pages   = {025009},
  year    = {2022}
}

@article{jinqchen2025svrg,
  author  = {Jin, Qinian and Chen, Liuhong},
  title   = {Stochastic Variance Reduced Gradient Method for Linear Ill-Posed
             Inverse Problems},
  journal = {Inverse Problems},
  volume  = {41},
  number  = {5},
  pages   = {055014},
  year    = {2025}
}

@article{jinzhou2026rsvrg,
  author  = {Jin, Bangti and Zhou, Zehui},
  title   = {Convergence of Regularized Stochastic Variance Reduced Gradient
             for Linear Inverse Problems},
  journal = {Inverse Problems},
  volume  = {42},
  number  = {4},
  pages   = {045006},
  year    = {2026}
}

@book{engl1996,
  author    = {Engl, Heinz W. and Hanke, Martin and Neubauer, Andreas},
  title     = {Regularization of Inverse Problems},
  series    = {Mathematics and Its Applications},
  volume    = {375},
  publisher = {Kluwer Academic Publishers},
  address   = {Dordrecht},
  year      = {1996}
}

@article{yao2007early,
  author  = {Yao, Yuan and Rosasco, Lorenzo and Caponnetto, Andrea},
  title   = {On Early Stopping in Gradient Descent Learning},
  journal = {Constructive Approximation},
  volume  = {26},
  number  = {2},
  pages   = {289--315},
  year    = {2007}
}

@article{bottou2018,
  author  = {Bottou, L{\'e}on and Curtis, Frank E. and Nocedal, Jorge},
  title   = {Optimization Methods for Large-Scale Machine Learning},
  journal = {SIAM Review},
  volume  = {60},
  number  = {2},
  pages   = {223--311},
  year    = {2018}
}

@article{robbins1951stochastic,
  author  = {Robbins, Herbert and Monro, Sutton},
  title   = {A Stochastic Approximation Method},
  journal = {Annals of Mathematical Statistics},
  volume  = {22},
  number  = {3},
  pages   = {400--407},
  year    = {1951}
}

@article{jiang2005maxima,
  author  = {Jiang, Tiefeng},
  title   = {Maxima of Entries of Haar Distributed Matrices},
  journal = {Probability Theory and Related Fields},
  volume  = {131},
  number  = {1},
  pages   = {121--144},
  year    = {2005},
  doi     = {10.1007/s00440-004-0376-5}
}

@article{bissantz2007convergence,
  title={Convergence rates of general regularization methods for statistical inverse problems and applications},
  author={Bissantz, Nicolai and Hohage, Thorsten and Munk, Axel and Ruymgaart, Frits},
  journal={SIAM Journal on Numerical Analysis},
  volume={45},
  number={6},
  pages={2610--2636},
  year={2007},
  publisher={SIAM}
}

@article{hansen2007regularization,
  title={Regularization tools version 4.0 for Matlab 7.3},
  author={Hansen, Per Christian},
  journal={Numerical algorithms},
  volume={46},
  number={2},
  pages={189--194},
  year={2007},
  publisher={Springer}
}

@article{morozov1966solution,
  author  = {Morozov, V. A.},
  title   = {On the solution of functional equations by the method of regularization},
  journal = {Soviet Mathematics Doklady},
  volume  = {7},
  pages   = {414--417},
  year    = {1966},
  note    = {Russian original: Dokl. Akad. Nauk SSSR 167(3):510--512}
}

@article{blanchard2012discrepancy,
  author  = {Blanchard, Gilles and Math{\'e}, Peter},
  title   = {Discrepancy principle for statistical inverse problems with
             application to conjugate gradient iteration},
  journal = {Inverse Problems},
  volume  = {28},
  number  = {11},
  pages   = {115011},
  year    = {2012},
  doi     = {10.1088/0266-5611/28/11/115011}
}

@article{celisse2021analyzing,
  author  = {Celisse, Alain and Wahl, Martin},
  title   = {Analyzing the discrepancy principle for kernelized spectral
             filter learning algorithms},
  journal = {Journal of Machine Learning Research},
  volume  = {22},
  number  = {76},
  pages   = {1--59},
  year    = {2021},
  note    = {arXiv:2004.08436}
}

@article{blanchard2018optimal,
  author  = {Blanchard, Gilles and Hoffmann, Marc and Rei{\ss}, Markus},
  title   = {Optimal adaptation for early stopping in statistical inverse problems},
  journal = {SIAM/ASA Journal on Uncertainty Quantification},
  volume  = {6},
  number  = {3},
  pages   = {1043--1075},
  year    = {2018},
  doi     = {10.1137/17M1154096}
}

@article{raskutti2014early,
  author  = {Raskutti, Garvesh and Wainwright, Martin J. and Yu, Bin},
  title   = {Early stopping and non-parametric regression: an optimal
             data-dependent stopping rule},
  journal = {Journal of Machine Learning Research},
  volume  = {15},
  number  = {11},
  pages   = {335--366},
  year    = {2014}
}

@article{buhlmann2003boosting,
  author  = {B{\"u}hlmann, Peter and Yu, Bin},
  title   = {Boosting with the {$L_2$} loss: regression and classification},
  journal = {Journal of the American Statistical Association},
  volume  = {98},
  number  = {462},
  pages   = {324--339},
  year    = {2003},
  doi     = {10.1198/016214503000125}
}

@article{zhang2005boosting,
  author  = {Zhang, Tong and Yu, Bin},
  title   = {Boosting with early stopping: convergence and consistency},
  journal = {The Annals of Statistics},
  volume  = {33},
  number  = {4},
  pages   = {1538--1579},
  year    = {2005},
  doi     = {10.1214/009053605000000255}
}

@article{caponnetto2007optimal,
  author  = {Caponnetto, Andrea and De Vito, Ernesto},
  title   = {Optimal rates for the regularized least-squares algorithm},
  journal = {Foundations of Computational Mathematics},
  volume  = {7},
  number  = {3},
  pages   = {331--368},
  year    = {2007},
  doi     = {10.1007/s10208-006-0196-8}
}

@article{dieuleveut2016nonparametric,
  author  = {Dieuleveut, Aymeric and Bach, Francis},
  title   = {Nonparametric stochastic approximation with large step-sizes},
  journal = {The Annals of Statistics},
  volume  = {44},
  number  = {4},
  pages   = {1363--1399},
  year    = {2016},
  doi     = {10.1214/15-AOS1391}
}

@article{lin2017optimal,
  author  = {Lin, Junhong and Rosasco, Lorenzo},
  title   = {Optimal rates for multi-pass stochastic gradient methods},
  journal = {Journal of Machine Learning Research},
  volume  = {18},
  number  = {97},
  pages   = {1--47},
  year    = {2017}
}

@inproceedings{pillaudvivien2018statistical,
  author    = {Pillaud-Vivien, Loucas and Rudi, Alessandro and Bach, Francis},
  title     = {Statistical optimality of stochastic gradient descent on hard
               learning problems through multiple passes},
  booktitle = {Advances in Neural Information Processing Systems 31 (NeurIPS)},
  year      = {2018},
  note      = {arXiv:1805.10074}
}

@inproceedings{li2017stochastic,
  author    = {Li, Qianxiao and Tai, Cheng and E, Weinan},
  title     = {Stochastic modified equations and adaptive stochastic gradient
               algorithms},
  booktitle = {Proceedings of the 34th International Conference on Machine
               Learning (ICML)},
  series    = {PMLR},
  volume    = {70},
  pages     = {2101--2110},
  year      = {2017}
}

@article{li2019stochastic,
  author  = {Li, Qianxiao and Tai, Cheng and E, Weinan},
  title   = {Stochastic modified equations and dynamics of stochastic gradient
             algorithms {I}: mathematical foundations},
  journal = {Journal of Machine Learning Research},
  volume  = {20},
  number  = {40},
  pages   = {1--47},
  year    = {2019}
}

@article{sirignano2017stochastic,
  author  = {Sirignano, Justin and Spiliopoulos, Konstantinos},
  title   = {Stochastic gradient descent in continuous time},
  journal = {SIAM Journal on Financial Mathematics},
  volume  = {8},
  number  = {1},
  pages   = {933--961},
  year    = {2017},
  doi     = {10.1137/17M1126825}
}

@article{bartlett2020benign,
  author  = {Bartlett, Peter L. and Long, Philip M. and Lugosi, G{\'a}bor and
             Tsigler, Alexander},
  title   = {Benign overfitting in linear regression},
  journal = {Proceedings of the National Academy of Sciences},
  volume  = {117},
  number  = {48},
  pages   = {30063--30070},
  year    = {2020},
  doi     = {10.1073/pnas.1907378117}
}

@article{needell2010randomized,
  author  = {Needell, Deanna},
  title   = {Randomized {K}aczmarz solver for noisy linear systems},
  journal = {BIT Numerical Mathematics},
  volume  = {50},
  number  = {2},
  pages   = {395--403},
  year    = {2010},
  doi     = {10.1007/s10543-010-0265-5}
}

@article{zouzias2013randomized,
  author  = {Zouzias, Anastasios and Freris, Nikolaos M.},
  title   = {Randomized extended {K}aczmarz for solving least squares},
  journal = {SIAM Journal on Matrix Analysis and Applications},
  volume  = {34},
  number  = {2},
  pages   = {773--793},
  year    = {2013},
  doi     = {10.1137/120889897}
}

@article{ma2015convergence,
  author  = {Ma, Anna and Needell, Deanna and Ramdas, Aaditya},
  title   = {Convergence properties of the randomized extended
             {G}auss--{S}eidel and {K}aczmarz methods},
  journal = {SIAM Journal on Matrix Analysis and Applications},
  volume  = {36},
  number  = {4},
  pages   = {1590--1604},
  year    = {2015},
  doi     = {10.1137/15M1014425}
}

@article{gower2015randomized,
  author  = {Gower, Robert M. and Richt{\'a}rik, Peter},
  title   = {Randomized iterative methods for linear systems},
  journal = {SIAM Journal on Matrix Analysis and Applications},
  volume  = {36},
  number  = {4},
  pages   = {1660--1690},
  year    = {2015},
  doi     = {10.1137/15M1025487}
}

@book{hansen1998rank,
  author    = {Hansen, Per Christian},
  title     = {Rank-Deficient and Discrete Ill-Posed Problems: Numerical
               Aspects of Linear Inversion},
  publisher = {SIAM},
  address   = {Philadelphia},
  year      = {1998},
  doi       = {10.1137/1.9780898719697}
}

@book{vogel2002computational,
  author    = {Vogel, Curtis R.},
  title     = {Computational Methods for Inverse Problems},
  publisher = {SIAM},
  address   = {Philadelphia},
  year      = {2002},
  doi       = {10.1137/1.9780898717570}
}

@article{jahn2023noise,
  author  = {Jahn, Tim},
  title   = {Noise level free regularization of general linear inverse
             problems under unconstrained white noise},
  journal = {SIAM/ASA Journal on Uncertainty Quantification},
  volume  = {11},
  number  = {2},
  pages   = {591--615},
  year    = {2023},
  doi     = {10.1137/22M1506675}
}

@article{jahn2024early,
  author  = {Jahn, Tim and Jin, Bangti},
  title   = {Early stopping of untrained convolutional neural networks},
  journal = {SIAM Journal on Imaging Sciences},
  volume  = {17},
  number  = {4},
  pages   = {2331--2361},
  year    = {2024},
  doi     = {10.1137/24M1636617}
}

@article{harrach2020beyond,
  author  = {Harrach, Bastian and Jahn, Tim and Potthast, Roland},
  title   = {Beyond the {B}akushinskii veto: regularising linear inverse
             problems without knowing the noise distribution},
  journal = {Numerische Mathematik},
  volume  = {145},
  number  = {3},
  pages   = {581--603},
  year    = {2020},
  doi     = {10.1007/s00211-020-01122-2}
}

@article{harrach2023regularizing,
  author  = {Harrach, Bastian and Jahn, Tim and Potthast, Roland},
  title   = {Regularizing linear inverse problems under unknown non-{G}aussian
             white noise allowing repeated measurements},
  journal = {IMA Journal of Numerical Analysis},
  volume  = {43},
  number  = {1},
  pages   = {443--500},
  year    = {2023},
  }

@inproceedings{martin2025pnp,
  title={Pnp-flow: Plug-and-play image restoration with flow matching},
  author={Martin, S{\'e}gol{\`e}ne and Gagneux, Anne and Hagemann, Paul and Steidl, Gabriele},
  booktitle={International Conference on Learning Representations},
  volume={2025},
  pages={45466--45492},
  year={2025}
}

@article{bakushinskii1984remarks,
  author  = {Bakushinski{\u\i}, A. B.},
  title   = {Remarks on the choice of regularization parameter from
             quasioptimality and relation tests},
  journal = {Zh. Vychisl. Mat. i Mat. Fiz.},
  volume  = {24},
  number  = {8},
  pages   = {1258--1259},
  year    = {1984},
  language = {russian}
}

@article{becker2011regularization,
  author  = {Becker, S. M. A.},
  title   = {Regularization of statistical inverse problems and the
             {B}akushinski{\u\i} veto},
  journal = {Inverse Problems},
  volume  = {27},
  number  = {11},
  pages   = {115010},
  year    = {2011},
  doi     = {10.1088/0266-5611/27/11/115010}
}

@article{strohmer2009randomized,
  title={A randomized Kaczmarz algorithm with exponential convergence},
  author={Strohmer, Thomas and Vershynin, Roman},
  journal={Journal of Fourier Analysis and Applications},
  volume={15},
  number={2},
  pages={262--278},
  year={2009},
  publisher={Springer}
}

@article{varre2021last,
  title={Last iterate convergence of SGD for Least-Squares in the Interpolation regime.},
  author={Varre, Aditya Vardhan and Pillaud-Vivien, Loucas and Flammarion, Nicolas},
  journal={Advances in Neural Information Processing Systems},
  volume={34},
  pages={21581--21591},
  year={2021}
}

@article{lu2022stochastic,
  author  = {Lu, Shuai and Math{\'e}, Peter},
  title   = {Stochastic gradient descent for linear inverse problems in
             {H}ilbert spaces},
  journal = {Mathematics of Computation},
  volume  = {91},
  number  = {336},
  pages   = {1763--1788},
  year    = {2022},
  doi     = {10.1090/mcom/3714}
}

@article{huang2025early,
  author  = {Huang, Jing and Jin, Qinian and Lu, Xiliang and Zhang, Liuying},
  title   = {On early stopping of stochastic mirror descent method for
             ill-posed inverse problems},
  journal = {Numerische Mathematik},
  volume  = {157}, number = {2}, pages = {539--571}, year = {2025},
  doi     = {10.1007/s00211-025-01458-7}
}

@article{zhang2023stochastic,
  author  = {Zhang, Ye and Chen, Chuchu},
  title   = {Stochastic asymptotical regularization for linear inverse problems},
  journal = {Inverse Problems},
  volume  = {39}, number = {1}, pages = {015007}, year = {2023},
  doi     = {10.1088/1361-6420/aca70f}
}

@article{deng2012mnist,
  title={The mnist database of handwritten digit images for machine learning research [best of the web]},
  author={Deng, Li},
  journal={IEEE signal processing magazine},
  volume={29},
  number={6},
  pages={141--142},
  year={2012},
  publisher={IEEE}
}

@article{Hamidieh2018,
  title={A data-driven statistical model for predicting the critical temperature of a superconductor},
  author={Hamidieh, Kam},
  journal={Computational Materials Science},
  volume={154},
  pages={346--354},
  year={2018},
  publisher={Elsevier},
  doi={10.1016/j.commatsci.2018.07.052}
}

@inproceedings{paquette2021sgd,
  author    = {Paquette, Courtney and Lee, Kiwon and Pedregosa, Fabian and Paquette, Elliot},
  title     = {{SGD} in the Large: Average-case Analysis, Asymptotics, and Stepsize Criticality},
  booktitle = {Proceedings of the 34th Conference on Learning Theory (COLT)},
  series    = {Proceedings of Machine Learning Research},
  volume    = {134},
  pages     = {3548--3626},
  year      = {2021}
}

@article{paquette2025homogenization,
  author  = {Paquette, Courtney and Paquette, Elliot and Adlam, Ben and Pennington, Jeffrey},
  title   = {Homogenization of {SGD} in high-dimensions: exact dynamics and generalization properties},
  journal = {Mathematical Programming},
  volume  = {214},
  pages   = {1--90},
  year    = {2025},
  doi     = {10.1007/s10107-024-02171-3}
}

@inproceedings{bach2013nonstrongly,
  author    = {Bach, Francis and Moulines, Eric},
  title     = {Non-strongly-convex smooth stochastic approximation with convergence rate {$O(1/n)$}},
  booktitle = {Advances in Neural Information Processing Systems},
  volume    = {26},
  pages     = {773--781},
  year      = {2013}
}

@article{dieuleveut2017harder,
  author  = {Dieuleveut, Aymeric and Flammarion, Nicolas and Bach, Francis},
  title   = {Harder, better, faster, stronger convergence rates for least-squares regression},
  journal = {Journal of Machine Learning Research},
  volume  = {18},
  number  = {101},
  pages   = {1--51},
  year    = {2017}
}

@article{jain2018parallelizing,
  author  = {Jain, Prateek and Kakade, Sham M. and Kidambi, Rahul and Netrapalli, Praneeth and Sidford, Aaron},
  title   = {Parallelizing stochastic gradient descent for least squares regression: mini-batching, averaging, and model misspecification},
  journal = {Journal of Machine Learning Research},
  volume  = {18},
  number  = {223},
  pages   = {1--42},
  year    = {2018}
}

\end{document}